\title{Characterizing downwards closed, strongly first order, relativizable dependencies}
\author{Pietro Galliani\\Pietro.Galliani@unibz.it}
\newtheorem{Theorem}{Theorem}[section]
\newtheorem{Proposition}[Theorem]{Proposition}
\newtheorem{Lemma}[Theorem]{Lemma}
\newtheorem{Corollary}[Theorem]{Corollary}
\theoremstyle{definition}
\newtheorem{Definition}[Theorem]{Definition}
\newcommand{\tuple}{\mathbf}
\newcommand{\FO}{\texttt{FO}}
\newcommand{\ESO}{\texttt{ESO}}
\newcommand{\M}{\mathfrak M}
\newcommand{\D}{\mathbf D} 
\newcommand{\DD}{\mathcal D}
\newcommand{\SSd}{\mathcal S}
\newcommand{\E}{\mathbf E}
\newcommand{\EE}{\mathcal E}
\newcommand{\F}{\mathbf F}
\newcommand{\LL}{\mathcal L}
\newcommand{\U}{\mathbf U}
\newcommand{\parts}{\mathcal P}
\newcommand{\NT}{\texttt{nt}}
\newcommand{\Dmax}{\mathbf D_{\textbf{max}}}
\newcommand{\h}{\underline{~~}}
\newcommand{\dom}{\texttt{Dom}}
\newcommand{\fv}{\text{Free}}
\begin{document}
\maketitle

\begin{abstract}
In Team Semantics, a dependency notion is strongly first order if every sentence of the logic obtained by adding the corresponding atoms to First Order Logic is equivalent to some first order sentence. In this work it is shown that all nontrivial dependency atoms that are strongly first order, downwards closed, and relativizable (in the sense that the relativizations of the corresponding atoms with respect to some unary predicate are expressible in terms of them) are definable in terms of constancy atoms. 

Additionally, it is shown that any strongly first order dependency is safe for any family of downwards closed dependencies, in the sense that every sentence of the logic obtained by adding to First Order Logic both the strongly first order dependency and the downwards closed dependencies is equivalent to some sentence of the logic obtained by adding only the downwards closed dependencies. 
\end{abstract}

\section{Introduction}
Team Semantics \cite{hodges97} generalizes Tarskian semantics for First Order Logic by defining the satisfaction relation with respect to sets of assignments, called \emph{teams}, rather with respect to single assignments. Team Semantics was originally developed in order to provide a compositional semantics equivalent to the imperfect-information, game-theoretic semantics of \emph{Independence-Friendly Logic} \cite{hintikkasandu89,hintikka96,mann11}; however, with the work of V\"a\"an\"anen \cite{vaananen07} it became clear that this semantics is a natural generalization of Tarskian semantics, one which -- even putting aside its connections with the theory of databases (see for instance  \cite{kontinen13}) -- greatly extends its expressive capabilities by allowing the study of a far greater range of atoms and operators. 

One of the most peculiar aspects of Team Semantics is the way in which it straddles the boundary between first order and second order: while the syntaxes of the logics based on Team Semantics studied so far have for the most part a solidly first order flavour, in the sense that they involve no \emph{explicit} higher order quantification\footnote{The \emph{majority quantifier} of \cite{durand2015dependence} could arguably be considered an exception to this.}, their expressive power often rises well above the one of First Order Logic, all the way up to Existential Second Order Logic (as in the cases of Dependence Logic \cite{vaananen07} and Independence Logic \cite{gradel13}) or even to full Second Order Logic (as in the case of Team Logic \cite{vaananen07b,kontinen2011team}). 

This is the case even for logics, such as the above mentioned Dependence Logic and Independence Logic or for Inclusion Logic \cite{galliani12}, which only add to the language of First Order Logic \emph{dependency atoms} whose satisfaction conditions are first order definable as properties of relations. In brief, this is due to the higher order quantification hidden in Team Semantics rules for disjunction and existential quantification (see Rules \textbf{TS-$\vee$} and \textbf{TS-$\exists$} of Definition \ref{def:teamsem}). For First Order Logic proper there exists a strict equivalence between Team Semantics and the usual Tarskian semantics on the level of sentences (see Proposition \ref{propo:FOsent}), but this equivalence fails badly on the level of formulas: there exist properties of relations that are first order definable (in the sense that they are defined by a first order sentence $\phi(R)$), but that do not correspond to the satisfaction conditions of any first order \emph{formula} in Team Semantics (that is, there is no first order formula $\psi(\tuple x)$ which is satisfied by a set of assignments $X$ if and only if the corresponding relation satisfies $\phi(R)$). Adding new atoms with these (first order definable) satisfaction conditions, therefore, will increase the expressive power of the logic, in the sense that there will be formulas of this new logic which are not equivalent to any first order formula. 

Does it follow that this new logic will be more expressive also with respect to \emph{sentences}, in the sense that there will exist sentences of this new logic which are not equivalent to any first order sentence? Not necessarily. While this is true for e.g. Dependence Logic and Independence Logic (that is, for the logics obtained by adding the \emph{functional dependence atoms} or the \emph{independence atoms} of Definition \ref{def:depindep} respectively), it is possible to find dependencies for which this is not the case: while adding them to the language of First Order Logic makes it more expressive \emph{with respect to formulas}, every sentence of this new logic is still equivalent to some first order sentence. Dependency atoms, or families of dependency atoms, for which this is the case are called \emph{strongly first order} \cite{galliani2015upwards,galliani2016strongly}.

This asymmetry between expressivity on the level of formulas and expressivity on the level of sentences is one of the most intriguing aspects of Team Semantics; and, in particular, the fact that when using Team Semantics it is possible to generate logics with expressive powers between that of First Order Logic \FO~ and that of Existential Second Order Logic \ESO ~(included) simply by adding to \FO~ atoms with first order definable satisfaction conditions makes Team Semantics an eminently suitable tool for the study and classification of fragments of \ESO, a topic rich with open questions and with deep connections with important complexity-theoretic conjectures. 

The study of strongly first order dependencies, in particular, can be thought of as an attempt to investigate the border between first order and second order ``from below'' by seeking to characterize precisely which choices of dependency atoms (or families of dependency atoms) breach or fail to breach it. The conjecture according to which a dependency is strongly first order if and only if it is definable in terms of upwards closed dependencies and constancy dependencies\footnote{As proved in \cite{galliani2015upwards} these families of dependencies are both strongly first order, as is their union.} is, however, still unproven. In this work, a proof for a special case of it will be found: if a non-trivially-false dependency is strongly first order, is downwards closed, and is furthermore \emph{relativizable} (a new, natural property of dependencies which will be introduced in this work)\footnote{This property is related to the \emph{relativization operator} of \cite{ronnholm2018arity}.} then it is definable in terms of constancy atoms alone. 

A property related to strong first orderness and introduced in the recent article \cite{galliani2018safe} is \emph{safety}. In brief, a dependency (or a set of dependencies) is safe for some logic if it can be added to it without increasing the expressive power (wrt sentences) of the resulting formalism. In particular, a dependency is strongly first order if and only if it is safe for First Order Logic. This notion of safety, thus, generalizes the notion of strong first orderness to logics more expressive than pure First Order Logic, and a complete characterization of safety would provide a full classification of the expressive powers of the logics obtained by adding dependencies to logics with Team Semantics. Safety, however, is a surprisingly delicate notion: for instance, as shown in \cite{galliani2018safe}, constancy -- despite being perhaps the simplest example of strongly first order dependency -- is \emph{not} safe for First Order Logic plus certain classes of dependencies (e.g. unary inclusion dependencies). In this work it will be shown that, in the case of downwards closed dependencies, safety is more robust: any strongly first order dependency is safe for First Order Logic plus any family containing only downwards closed dependencies. This result, aside from being interesting in its own right, will be essential for the characterization of strongly first order downwards closed relativizable dependencies mentioned above. 

Much of the research in the area of Team Semantics thus far has focused on the study of specific logics obtained by adding specific atoms (or specific operators) to First Order Logic with Team Semantics. This is an important direction of research, and many of the resulting logics are of independent interest (and, often, have intriguing connections with the theory of databases). But an equivalently important, if so far understandably\footnote{Indeed, when studying the properties of a new type of semantics, it is a good strategy to begin by identifying and investigating logics that make use of it and are of independent interest.} less studied, one consists in the \emph{classification} of general families of such logics in terms of their relationships and of their meta-logical properties.\footnote{For examples of works along these lines, see for instance \cite{galliani2015upwards} or \cite{kontinen2016decidability}.} The study of strongly first order and safe dependencies, to which this paper contributes, is a promising subtopic of this intriguing and largely unexplored field of research.

\section{Preliminaries}
\subsection{Team Semantics and Dependencies}
As mentioned in the Introduction, Team Semantics generalizes Tarskian semantics for First Order Logic by letting formulas be satisfied or not satisfied by sets of assignments, which are called \emph{teams} for historical reasons: 
\begin{Definition}[Team]
Let $\mathfrak M$ be a first order model (over any signature $\Sigma$) with domain $M$ and let $V$ be a finite set of variables. Then a team $X$ over $\M$ with domain $\dom(X) = V$ is a set of variable assignments $s: V \rightarrow M$ over $\M$.
\end{Definition}
There exists an obvious correspondence between teams and relations: 
\begin{Definition}[Teams to Relations]
Let $X$ be a team over some model $\M$, and let $\tuple t = t_1 \ldots t_n$ be a finite tuple of terms in the signature of $\M$ with variables in $\dom(X)$. Then we write $X(\tuple t)$ for the $n$-ary relation 
\[
	X(\tuple t) = \{\langle t_1^\M(s),t_2^\M(s), \ldots, t_n^\M(s)\rangle : s \in X\}.
\]
	where each $t_i^\M(s)$ is the interpretation of $t_i$ in $\M$ for the assignment $s$. 
\end{Definition} 

Teams can be restricted to a subset of the variables in their domain in the obvious way: 
\begin{Definition}[Team Restriction]
Let $X$ be a team over some model $\M$ and let $V \subseteq \dom(X)$ be a subset of the variables in its domain. Then we write $X_{|V}$ for the restriction of $X$ to the variables in $V$, that is for 
\[
	X_{|V} = \{s_{|V} : s \in X\}
\]
where, for all assignments $s \in X$, $s_{|V}$ is the unique assignment with domain $V$ such that $s_{|V}(v) = s(v)$ for all variables $v \in V$. 
\end{Definition}

It will also be useful, in several places of this work, to consider the subteam of a given team which contains only the assignments satisfying (in the sense of the usual Tarskian Semantics) some first order formula $\theta$. This is defined in the obvious way: 
\begin{Definition}[Team Selection]
Let $X$ be a team over some model $\M$ and let $\theta(\tuple x)$ be a first order formula over the signature of $\M$ whose free variables $\fv(\theta)$ are contained in the domain $\dom(X)$ of $X$. Then
\[
X \upharpoonright \theta(\tuple x) = \{s \in X : \M \models_s \theta(\tuple x)\}
\]
where the expression $\M \models_s \theta(\tuple x)$ means that the assignment $s$ satisfies $\theta(\tuple x)$ in $\M$ according to the usual Tarskian Semantics. 
\end{Definition}

Finally, in order to define Team Semantics we will also need the two following operations: 
\begin{Definition}[(Lax) Supplementation and Duplication]
Let $X$ be a team over some first order model $\M$ and let $H: X \rightarrow \parts(M^k) \backslash \{\emptyset\}$ be a function associating to each assignment $s \in X$ some nonempty set $H(s) \subseteq M^k$ of $k$-tuples of possible values, and let $\tuple v = v_1 \ldots v_k$ be any $k$-tuple of pairwise distinct variable symbols (which may or may not occur already in $\dom(X)$). Then we write $X[H/\tuple v]$ for the team, with domain $\dom(X) \cup \{v_i : i = 1 \ldots k\}$, defined as 
\[
X[H/\tuple v] = \{s[\tuple m/\tuple v] : s \in X, \tuple m \in H(s)\}
\]
where, as usual, $s[\tuple m/\tuple v]$ is the assignment obtained starting from $s$ and fixing the values of the variables $\tuple v = v_1 \ldots v_k$ to $\tuple m = m_1 \ldots m_k$. This team is called the \emph{supplementation} of $X$ along $H$.

For every $\tuple m \in M^k$, we also write $X[\tuple m/\tuple v]$ for the set $\{s[\tuple m / \tuple v] : s \in X\}$, that is for the supplementation of $X$ along $\tuple v$ via some $H$ with $H(s) = \{\tuple m\}$ for all $s \in X$.

Finally, we write $X[M/\tuple v]$ for the largest possible supplementation of $X$, which is also called the \emph{duplication} of $X$ and which extends $X$ by assigning \emph{all} possible values to the variables in $\tuple v$:
\[
X[M/\tuple v] = \{s[\tuple m/\tuple v] : s \in X, \tuple m \in M^k\}
\]
\end{Definition}
\begin{figure}
\[
\begin{array}{l c l}
X = \begin{array}{|c | c|}
\hline
& v_0\\
\hline
s_0 & 0\\
s_1 & 1\\
\hline
\end{array}
&~~~~~~~~~~~~~~~~~~~~~~~~~~~~~~~&
H(s) = \left\{\begin{array}{l l}
\{\langle 1, 0\rangle \} & \text{ if } s = s_0\\
\{\langle 0, 0\rangle, \langle 0, 1\rangle\} & \text{ if } s = s_1
\end{array}
\right.\\
\\
X[H/v_1 v_2] = 
\begin{array}{| c | c | c | c |}
\hline 
& v_0 & v_1 & v_2\\
\hline
s'_0 & 0 & 1 & 0\\
s'_1 & 1 & 0 & 0\\
s'_2 & 1 & 0 & 1\\
\hline
\end{array}
&~~~~~~~~~~~~~~~~~~~~~~~~~~~~~~~&
X[M/v_1 v_2] = 
\begin{array}{| c | c | c | c |}
\hline 
& v_0 & v_1 & v_2\\
\hline
s''_0 & 0 & 0 & 0\\
s''_1 & 0 & 0 & 1\\
s''_2 & 0 & 1 & 0\\
s''_3 & 0 & 1 & 1\\
s''_4 & 1 & 0 & 0\\
s''_5 & 1 & 0 & 1\\
s''_6 & 1 & 1 & 0\\
s''_7 & 1 & 1 & 1\\
\hline
\end{array}
\end{array}
\]
\caption{Supplementation and duplication examples in a model with only two elements $0$ and $1$.}
\end{figure}
The reason why the above supplementation operation is described as ``lax'' is because there also exists in the literature a ``strict'' version, in which $H(s)$ is a singleton for every $s \in X$. As discussed in \cite{galliani12}, the choice between these two operations (as well as between two possible semantics for disjunction) corresponds precisely to the choice between allowing or disallowing nondeterministic strategies in the equivalent imperfect-information game-theoretic semantics; however, when considering non downwards-closed dependencies, the strict variant of Team Semantics can fail to satisfy locality (Proposition \ref{propo:local} in this work) in the sense that the satisfiability of a formula $\phi$ in a team $X$ may depend on the values taken in $X$ by variables which do \emph{not} appear free in $\phi$. Because of this, in this work (as in most of the recent literature in the area of Team Semantics) we will focus only on the lax version of the semantics. 
\begin{Definition}[(Lax) Team Semantics for First Order Logic]
Let $\M$ be a first order model, let $X$ be a team over it, and let $\phi(\tuple x)$ be a first order formula in Negation Normal Form\footnote{It is possible to define Team Semantics for expressions not in Negation Normal Form, as it was done for instance in \cite{vaananen07}. However, doing so requires taking track of positive and negative satisfaction conditions, which makes the semantics more cumbersome; and, as discussed in \cite{kontinenv10}, the usual (``dual'') negation operator has little semantic meaning in certain Team Semantics-based extensions of First Order Logic, as the satisfaction conditions of a formula and of its negation are essentially unrelated. Furthermore, it is often unclear what the interpretation of the dual negation of a dependency atom should be: for instance, in \cite{vaananen07} it was decided that the negations of functional dependence atoms $=\!\!(\tuple x; \tuple y)$ are only satisfied by empty teams, that is they are all equivalent to $\bot$.  For this reason, in this work we will assume that all expressions are in Negation Normal Form. Another possible negation operator, of clearer interpretation in Team Semantics, is the contradictory negation $\M \models_X \sim \phi \Leftrightarrow \M \not \models_X \phi$; but adding it to First Order Logic together with even very simple dependencies (e.g. constancy atoms) brings the expressive power of the resulting formalism all the way up to Second Order Logic. The logic $\FO(\sim)$ obtained by taking First Order Logic (with Team Semantics) and adding to it the contradictory negation (but no dependencies) is however equivalent to First Order Logic wrt sentences, as shown in \cite{galliani2016strongly}, and in \cite{luck2018axioms} an axiomatization for it is found. This operator will not be further discussed in this work.} over the signature of $\M$ such that the free variables of $\phi$ are contained in the domain of $X$. Then we say that $X$ satisfies $\phi$ in $\M$, and write $\M \models_X \phi$, if and only if this follows from the following rules: 
\begin{description}
\item[TS-lit] If $\alpha$ is a first order literal then $\M \models_X \alpha$ if and only if, for all $s \in X$, $\M \models_s \alpha$ according to the usual rules of Tarskian Semantics; 
\item[TS-$\vee$] For all formulas $\psi_1$ and $\psi_2$, $\M \models_X \psi_1 \vee \psi_2$ if and only if there exist teams $Y$ and $Z$ such that $X = Y \cup Z$, $\M \models_{Y} \psi_1$ and $\M \models_{Z} \psi_2$;\footnote{In this rule we do not require that $Y \cap Z = \emptyset$. Doing that would give us the strict semantics for disjunction, which -- as in the case of supplementation and existential quantification --  corresponds to allowing only deterministic strategies in the game theoretic semantics and would result in the failure of Proposition \ref{propo:local} for certain non-downwards closed dependencies.}
\item[TS-$\wedge$] For all formulas $\psi_1$ and $\psi_2$, $\M \models_X \psi_1 \wedge \psi_2$ if and only if $\M \models_X \psi_1$ and $\M \models_X \psi_2$;
\item[TS-$\exists$] For all formulas $\psi$ and all variables $v$, $\M \models_X \exists v \psi$ if and only if there exists some $H:X \rightarrow \parts(M) \backslash \{\emptyset\}$ such that $\M \models_{X[H/v]} \psi$;
\item[TS-$\forall$] For all formulas $\psi$ and all variables $v$, $\M \models_X \forall v \psi$ if and only if $\M \models_{X[M/v]} \psi$. 
\end{description}
Given a first order model $\M$ and a negation normal form, first order sentence $\phi$ over its signature, we say that $\phi$ is true in $\M$ in Team Semantics, and write $\M \models \phi$, if and only if $\M \models_{\{\epsilon\}} \phi$, where $\{\epsilon\}$ is the team containing the unique assignment $\epsilon$ over the empty domain.
\label{def:teamsem}
\end{Definition}
The following two widely known facts describe completely the relationship between Team Semantics and Tarskian Semantics for First Order Logic: 
\begin{Proposition}
Let $\M$ be a first order model, let $X$ be a team over it, and let $\phi(\tuple x) \in \FO$ be a first order formula over the signature of $\M$ and with free variables in $\dom(X)$. Then $\M \models_X \phi(\tuple x)$ if and only if, for all assignments $s \in X$, $\M \models_s \phi$ according to the usual Tarskian Semantics. 
\label{propo:FOform}
\end{Proposition}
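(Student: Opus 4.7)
The plan is to prove both directions simultaneously by straightforward structural induction on $\phi$. The base case is immediate: when $\phi$ is a literal, rule \textbf{TS-lit} is stated precisely so that $\M \models_X \phi$ iff every $s \in X$ satisfies $\phi$ in Tarski's sense. The conjunction case is also routine: $\M \models_X \psi_1 \wedge \psi_2$ iff $\M \models_X \psi_i$ for $i=1,2$ iff (by the induction hypothesis) every $s \in X$ Tarski-satisfies both $\psi_i$, iff every $s \in X$ Tarski-satisfies $\psi_1 \wedge \psi_2$.

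For the disjunction $\psi_1 \vee \psi_2$, the forward direction is easy: if $X = Y \cup Z$ with $\M \models_Y \psi_1$ and $\M \models_Z \psi_2$, the induction hypothesis gives that every $s \in Y$ Tarski-satisfies $\psi_1$ and every $s \in Z$ Tarski-satisfies $\psi_2$, and hence every $s \in X$ Tarski-satisfies $\psi_1 \vee \psi_2$. For the reverse direction, the natural split is $Y := X \upharpoonright \psi_1$ and $Z := X \setminus Y$: then every assignment of $Z$ must Tarski-satisfy $\psi_2$, so the induction hypothesis yields $\M \models_Y \psi_1$ and $\M \models_Z \psi_2$, giving $\M \models_X \psi_1 \vee \psi_2$ via \textbf{TS-$\vee$}. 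The universal case is immediate from the definition of $X[M/v]$: each $s' \in X[M/v]$ is $s[m/v]$ for some $s \in X$ and $m \in M$, so by induction hypothesis the team satisfies $\psi$ iff every such $s[m/v]$ Tarski-satisfies $\psi$, iff every $s \in X$ Tarski-satisfies $\forall v\, \psi$.

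The only case where one actually has to do something is the existential quantifier, and even here the argument is standard. The forward direction follows because any assignment $s \in X$ and any $m \in H(s)$ yield $s[m/v] \in X[H/v]$, which Tarski-satisfies $\psi$ by the induction hypothesis; hence $s$ Tarski-satisfies $\exists v\, \psi$. For the converse, use choice to pick, for each $s \in X$, some $m_s \in M$ with $\M \models_{s[m_s/v]} \psi$, and set $H(s) = \{m_s\}$; the induction hypothesis then gives $\M \models_{X[H/v]} \psi$. One should also verify quickly that the empty team satisfies every first order formula in team semantics (taking empty splits, empty choice functions, and noting $\emptyset[M/v] = \emptyset$), which handles the vacuous direction when $X = \emptyset$.

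No step poses a real obstacle; the mildly delicate point is choosing the right partition $Y,Z$ in the disjunction case, which is the place where the team-semantic disjunction fails to be flat for non-first-order atoms, so it is precisely this step that uses the fact that $\psi_1, \psi_2 \in \FO$ via the induction hypothesis on Tarskian satisfaction.
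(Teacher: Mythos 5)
Your proof is correct and is exactly the "straightforward induction" the paper leaves implicit: the same structural induction, with the disjunction split $Y = X \upharpoonright \psi_1$, $Z = X \setminus Y$ and the choice-function witness for the existential quantifier being the standard way to fill in the details. Nothing to add.
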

\begin{proof}
Straightforward induction. 
\end{proof}

\begin{Proposition}
Let $\M$ be a first order model and let $\phi \in \FO$ be a first order sentence over the signature of $\M$. Then $\M \models \phi$ in the sense of Team Semantics if and only if $\M \models \phi$ in the sense of Tarskian Semantics. 
\label{propo:FOsent}
\end{Proposition}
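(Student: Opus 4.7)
The plan is to derive this as an immediate corollary of the preceding Proposition \ref{propo:FOform}. Since $\phi$ is assumed to be a first order sentence (in Negation Normal Form, as per the standing convention), its set of free variables is empty, and so it is trivially contained in the domain of the team $\{\epsilon\}$, whose domain is itself empty. This means Proposition \ref{propo:FOform} applies to $\phi$ relative to the team $\{\epsilon\}$.

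First I would unfold the definition of truth of a sentence in Team Semantics: by the last clause of Definition \ref{def:teamsem}, $\M \models \phi$ in the Team Semantics sense is by definition $\M \models_{\{\epsilon\}} \phi$. Then I would apply Proposition \ref{propo:FOform} to conclude that $\M \models_{\{\epsilon\}} \phi$ holds if and only if $\M \models_s \phi$ holds in the Tarskian sense for every $s \in \{\epsilon\}$. Since $\{\epsilon\}$ contains only the empty assignment $\epsilon$, this amounts to $\M \models_\epsilon \phi$ in the Tarskian sense; and since $\phi$ has no free variables, this is the same as $\M \models \phi$ in the usual Tarskian sense.

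There is essentially no obstacle here: the proof is a one-line chain of equivalences once Proposition \ref{propo:FOform} is in place. The only thing worth double-checking is the hypothesis of that proposition (namely that $\fv(\phi) \subseteq \dom(X)$), but this is automatic for a sentence. So the whole argument is a straightforward application of the previous proposition together with the definition of $\M \models \phi$ for sentences in Team Semantics.
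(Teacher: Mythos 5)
Your proof is correct and follows exactly the same route as the paper's: unfold the definition of truth for sentences in Team Semantics to reduce to the singleton team $\{\epsilon\}$, apply Proposition \ref{propo:FOform}, and observe that satisfaction by the empty assignment coincides with Tarskian truth for a sentence. The extra check that $\fv(\phi) \subseteq \dom(\{\epsilon\})$ is a nice touch of diligence but, as you note, automatic.
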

\begin{proof}
By definition, $\M \models \phi$ in the sense of Team Semantics if and only if $\M \models_{\{\epsilon\}} \phi$. By Proposition \ref{propo:FOform}, this is the case if and only if $\M \models_\epsilon \phi$ in the sense of Tarskian Semantics. This holds if and only if $\phi$ is true in $\M$ according to Tarskian Semantics. 
\end{proof}
Do these two results show that Team Semantics is a pointlessly overwrought, but practically equivalent, variant of Tarskian semantics? Well, no: as mentioned in the Introduction, the richer structure of the satisfaction relation in Team Semantics makes it possible to extend First Order Logic in new ways. In Team Semantics, the satisfaction conditions of a first order formula $\phi(\tuple x)$ always correspond to some first order definable property of relations, in the sense that there exists some first order sentence $\phi'(R)$, where $R$ is a relation symbol not in the signature of $\M$, such that $\M \models_X \phi(\tuple x)$ if and only if $\M[X(\tuple x)/R] \models \phi'(R)$\footnote{Here we write $\M[X(\tuple t)/R]$ for the first order model obtained by adding the relation symbol $R$ to $\M$ -- if it is not present already -- and fixing the relation $X(\tuple t) = \{s(\tuple t) : s \in X\}$ as its interpretation.}: indeed, by Proposition \ref{propo:FOform}, it suffices to take $\phi'(R) := \forall \tuple x(R \tuple x \rightarrow \phi(\tuple x))$. However, a moment's thought shows that there exist first order definable properties of relations that do not correspond to the satisfaction conditions of any first order formula in Team Semantics. For instance,
\begin{Corollary}
There is no first order formula $\phi(x)$ such that $\M \models_X \phi(x)$ if and only if $|X(x)| \leq 1$, that is, if and only if the variable $x$ takes at most one value in $X$.
\label{coro:const}
\end{Corollary}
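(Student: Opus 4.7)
The plan is to exploit the \emph{flatness} of first order formulas in Team Semantics, which is exactly the content of Proposition \ref{propo:FOform}: a first order formula is satisfied by a team iff it is satisfied by every assignment in it. Any property of relations that is sensitive to \emph{how many} assignments a team contains (rather than which individual assignments it contains) cannot be flat, so it cannot be captured by any first order formula.

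Concretely, I would argue by contradiction. Suppose some first order $\phi(x)$ had the stated property. Fix any model $\M$ whose domain contains at least two distinct elements $a \neq b$ (one can do this in any infinite model, or more generally any model with $|M| \geq 2$, and the statement of the corollary is universally quantified over models so it is enough to refute it in one). Let $s_a$ and $s_b$ be the assignments with $s_a(x) = a$ and $s_b(x) = b$, and consider the singleton teams $X_a = \{s_a\}$ and $X_b = \{s_b\}$. Both satisfy $|X(x)| \leq 1$, hence by assumption $\M \models_{X_a} \phi(x)$ and $\M \models_{X_b} \phi(x)$. Applying Proposition \ref{propo:FOform}, this gives $\M \models_{s_a} \phi(x)$ and $\M \models_{s_b} \phi(x)$ in the sense of Tarskian Semantics.

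Now take the two-assignment team $X = \{s_a, s_b\}$. Since both of its assignments satisfy $\phi(x)$ in Tarski's sense, Proposition \ref{propo:FOform} applied in the other direction yields $\M \models_X \phi(x)$. But $X(x) = \{a,b\}$ has cardinality $2$, contradicting the assumed characterization of $\phi(x)$.

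There is no real obstacle here: the only substantive point is to recall that flatness (Proposition \ref{propo:FOform}) makes first order formulas closed under arbitrary unions of teams, whereas the property ``$|X(x)| \leq 1$'' is manifestly not closed under such unions. This is exactly why constancy is a genuinely new atom beyond \FO, and the same template shows that no non-flat property of teams can be captured by a first order formula.
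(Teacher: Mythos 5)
Your proof is correct and follows essentially the same route as the paper's: both arguments fix a model with two distinct elements, apply Proposition \ref{propo:FOform} to pass from the two satisfying singleton teams to Tarskian satisfaction of the individual assignments, and then apply it again to conclude that the two-element team would also satisfy $\phi(x)$, contradicting $|X(x)| = 2$. The surrounding remarks about flatness and closure under unions are a fair gloss on why the argument works, but the core contradiction is identical to the one in the paper.
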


Thus, it is possible to extend the Team Semantics of First Order Logic by introducing new types of atoms, such as the following \emph{constancy atoms:}
\begin{Definition}[Constancy Atoms, Constancy Logic]
	Constancy Logic\\$\FO(=\!\!(\cdot))$ is the logic obtained by adding to the language of First Order Logic (in Negation Normal Form) \emph{constancy atoms} $=\!\!(x)$ for all variables $x$, with satisfaction conditions 
\begin{description}
\item[TS-const] $\M \models_X =\!\!(x)$ if and only if $|X(x)| \leq 1$. 
\end{description}
\label{def:const}
\end{Definition}
It is then possible to inquire about the properties of this Constancy Logic. For example, by Corollary \ref{coro:const} we know already that there exist formulas in constancy logic which are not equivalent to any first order formula; but what about sentences? Can Constancy Logic be used to define classes of models which are not definable in First Order Logic? As shown in \cite{galliani12}, the answer is no. However, this \emph{is} the case for the following atoms and for the logics that they characterize: 
\begin{Definition}[Functional Dependence, Independence, and Inclusion]
	$\\$Dependence Logic, Independence Logic, and Inclusion Logic are the logics obtained by adding to the language of First Order Logic (in Negation Normal Form) \emph{functional dependence atoms} $=\!\!(\tuple x; \tuple y)$, \emph{independence atoms} $\tuple x \bot_\tuple y \tuple z$, and \emph{inclusion atoms} $\tuple x \subseteq \tuple y$ respectively, with the following semantics: 
\begin{description}
\item[TS-dep] $\M \models_X =\!\!(\tuple x; \tuple y)$ iff for any two $s, s' \in X$, if $s(\tuple x) = s'(\tuple x)$ then \\$s(\tuple y) = s'(\tuple y)$; 
\item[TS-indep] $\M \models_X \tuple x \bot_\tuple y \tuple z$ iff for any two $s, s' \in X$ with $s(\tuple y) = s'(\tuple y)$ there exists some $s'' \in X$ such that $s''(\tuple x \tuple y) = s(\tuple x \tuple y)$ and $s''(\tuple y \tuple z) = s'(\tuple y \tuple z)$; 
\item[TS-inc] $\M \models_X \tuple x \subseteq \tuple y$ iff for any $s \in X$ there exists some $s' \in X$ such that $s(\tuple x) = s'(\tuple y)$. 
\end{description}
\label{def:depindep}
\end{Definition}
As shown in \cite{vaananen07}, \cite{gradel13} and \cite{gallhella13} respectively, Dependence Logic and Independence Logic are equivalent to Existential Second Order Logic \ESO~ over sentences, while Inclusion Logic is equivalent to the positive fragment of Greatest Fixed Point Logic\footnote{This implies, in particular, that Inclusion Logic captures PTIME over finite ordered models.}.  This is the case despite the fact that the satisfaction conditions of the above atoms are easily definable as first order properties of relations; and, as mentioned in the Introduction, the fact that Team Semantics allows to greatly increase the expressive power of First Order Logic via first order definable properties is precisely what makes it an eminently suitable tool for the study of the boundary between first order and higher order logics. Additionally, it is worth remarking here that these dependencies have clear connections with database theory: the relationship between functional dependence and inclusion atoms and the functional \cite{codd72, armstrong74} and inclusion \cite{casanova82} dependencies of database theory is obvious, and as discussed in \cite{engstrom12} there likewise exists a correspondence between independence atoms and database-theoretic \emph{multivalued dependencies} \cite{fagin77}. Additionally, these atoms have a strong doxastic flavour: if a team $X$ represents the set of the states of things that an agent believes possible then $=\!\!(\tuple x, \tuple y)$ can be read as ``if I learned the true value of $\tuple x$, I could infer the value of $\tuple y$'', while $\tuple x \bot_\tuple y \tuple z$ can be read as ``if I learned the true value of $\tuple y$, the value of $\tuple x$ would give me no information whatsoever regarding the value of $\tuple z$'' and $\tuple x \subseteq \tuple y$ can be read as ``every possible value for $\tuple x$ is also a possible value for $\tuple y$''. As discussed in \cite{galliani2015doxastic}, all the connectives and operators of Team Semantics also admit natural interpretations in terms of the dynamics of belief states; but we will not pursue this line of thought any further in this work.

A very fruitful research direction in Team Semantics research so far has consisted in the study of the properties of these logics and of fragments thereof (see e.g. \cite{kontinen2013coherence,durand2012hierarchies,galliani12,gallhella13,galliani13b,kontinen2013axiomatizing, hannula2015axiomatizing, hannula2018hierarchies,durand2016expressivity}). This is a valuable topic of investigation, with rich connections with open problems in the classification of better known fragments of Second Order Logic and in descriptive complexity theory. Furthermore, as briefly mentioned above, these logics are of independent interest, and hence their properties and those of their fragments are deserving of study for their own sake. 

The present work is a contribution to a different -- if obviously related -- research programme, one in which the central topic of investigation is not Dependence Logic or Independence Logic or any other specific Team Semantics-based logic per se but rather \emph{Team Semantics itself}; and, under this perspective, rather than selecting ``interesting'' additional atoms and studying the logics obtained by adding them to First Order Logic we want to select ``interesting'' properties for Team Semantics-based logics and investigate which atoms (or more in general which operators, although as we will see there are plenty of open questions even in the more limited case of atoms) or collections of atoms satisfy them if added to the language of First Order Logic. 

In order to do this properly, we must first clarify exactly what a dependency atom \emph{is} in Team Semantics. The following definition, from \cite{kuusisto2015}, is a natural starting point: 
\begin{Definition}[Dependency]
For any $k \in \mathbb N$, a $k$-ary \emph{dependency} $\D$ is a class of models over the signature $\{R\}$ which is closed under isomorphisms, where $R$ is a $k$-ary relation symbol. 

	Given a family of dependencies $\DD = \{\D_i : i \in I\}$, we write $\FO(\DD)$ for the logic obtained by adding to the language of First Order Logic (in Negation Normal Form) all dependency atoms $\D \tuple t$, where $\D \in \DD$ and $\tuple t$ is a tuple of terms of length equal to the arity of $\D$. The Team Semantics for $\FO(\DD)$ is defined precisely as in Definition \ref{def:teamsem}, with the additional condition 
\begin{description}
\item[TS-$\D$] For all models $\M$ with domain $M$, all teams $X$ over $\M$, all $\D \in \DD$ and all tuples $\tuple t$ of terms in the signature of $\M$ with variables in the domain of $X$ and of length equal to the arity of $\D$, $\M \models_X \D \tuple t$ if and only if $\langle M, X(\tuple t)\rangle \in \D$.\footnote{In this work we identify the tuple $\langle M, R_1 \ldots R_n\rangle$ with the model $\M$ with domain $M$ and relations $R_1 \ldots R_n$. When no ambiguity regarding the choice of model is possible, as in this case, we also use the same symbol $R_i$ for the relation symbol and for its interpretation $R^\M_i$.}
\end{description}
\label{def:generaldeps}
\end{Definition}

The following result, that can be found in \cite{kontinen2016decidability} together with a number of results concerning the complexity of the satisfiability problem for fragments of logics with generalized dependencies, shows that -- regardless of the choice of $\DD$ -- $\FO(\DD)$ is \emph{local} in the sense that the values of variables which do not appear free in a formula are irrelevant to its satisfaction or lack thereof: 
\begin{Proposition}[Locality]
Let $\DD$ be any family of dependencies, let $\phi$ be a formula of $\FO(\DD)$, and let $\fv(\phi)$ be the set of its free variables. Then, for all models $\M$ whose signature contains that of $\phi$ and for all teams $X$ with domain containing $\fv(\phi)$, 
\[
	\M \models_X \phi \Leftrightarrow \M \models_{X_{|\fv(\phi)}} \phi. 
\]
\label{propo:local}
\end{Proposition}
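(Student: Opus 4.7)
The plan is a structural induction on $\phi$, with the main content in the disjunction, existential, and dependency atom cases. Let $V = \fv(\phi)$ throughout. The dependency atom case is immediate from the definition: if $\phi = \D \tuple t$, then $\M \models_X \phi$ iff $\langle M, X(\tuple t)\rangle \in \D$, and since every variable appearing in $\tuple t$ lies in $V$, we have $X(\tuple t) = X_{|V}(\tuple t)$. The literal case is handled similarly (using Proposition~\ref{propo:FOform} if desired), and conjunction is a direct application of the induction hypothesis on both conjuncts.

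For the disjunction $\phi = \psi_1 \vee \psi_2$, I would argue both directions as follows. Going from $X$ to $X_{|V}$: given a split $X = Y \cup Z$ witnessing satisfaction, restrict both to get $X_{|V} = Y_{|V} \cup Z_{|V}$, and invoke the induction hypothesis on $\psi_1, \psi_2$ (whose free variables are contained in $V$). Going from $X_{|V}$ to $X$: given a split $X_{|V} = Y' \cup Z'$, lift it by setting $Y = \{s \in X : s_{|V} \in Y'\}$ and $Z = \{s \in X : s_{|V} \in Z'\}$, so that $Y \cup Z = X$ and $Y_{|V} = Y'$, $Z_{|V} = Z'$; then apply the induction hypothesis again. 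This is exactly the step that would fail for the \emph{strict} disjunction, which is why the lax semantics is essential here.

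For $\phi = \exists v \psi$, note that $V = \fv(\psi) \setminus \{v\}$, so in particular $v \notin V$. The key bookkeeping lemma is that for any supplementation function $H : X \to \parts(M) \setminus \{\emptyset\}$, if one defines $H'(s') = \bigcup \{H(s) : s \in X,\ s_{|V} = s'\}$ on $X_{|V}$, then $(X[H/v])_{|V \cup \{v\}} = X_{|V}[H'/v]$; and conversely, given $H'$ on $X_{|V}$, the pullback $H(s) = H'(s_{|V})$ satisfies the same identity. The induction hypothesis applied to $\psi$ (whose free variables are contained in $V \cup \{v\}$) then transports satisfaction between $X[H/v]$ and $X_{|V}[H'/v]$ in both directions. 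The universal case $\phi = \forall v \psi$ is the same but easier, since $X_{|V}[M/v] = (X[M/v])_{|V \cup \{v\}}$ holds automatically.

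I expect the main obstacle to be purely notational: keeping straight the interaction between restriction and supplementation when $v$ may or may not already occur in $\dom(X)$, and verifying carefully that the two set-theoretic identities $(X[H/v])_{|V \cup \{v\}} = X_{|V}[H'/v]$ (with the right choice of $H'$) actually hold. No deep idea beyond the induction is needed; the proposition is essentially a sanity check that the lax rules of Definition~\ref{def:teamsem} were set up to make free-variable occurrences determine satisfaction.
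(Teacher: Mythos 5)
Your induction is correct and is precisely the ``straightforward induction'' that the paper leaves to the reader: the identities relating restriction to supplementation/duplication, and the lifting of a split of $X_{|V}$ back to a split of $X$, are exactly the details that need checking, and you have them right. One parenthetical remark is backwards, though: under the strict semantics it is the direction from $X$ to $X_{|V}$ that breaks (a disjoint split $X = Y \cup Z$ with $Y \cap Z = \emptyset$ may restrict to an overlapping split of $X_{|V}$), whereas your lift of a disjoint split of $X_{|V}$ is itself disjoint, so the lifting direction survives even in the strict setting.
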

\begin{proof}
By structural induction.
\end{proof}

It is easy to verify that all the dependency atoms discussed so far are special cases of Definition \ref{def:generaldeps}. Note, furthermore, that nothing in Definition \ref{def:generaldeps} requires the class $\D$ to be first order definable. For instance, $\U = \{\langle M, R\rangle : |R| \text{ is uncountable}\}$\footnote{Strictly speaking, this is not a set but a proper class since $M$ ranges over all possible (uncountable) sets of elements. This is not a problem for the purposes of this work -- see Definition \ref{def:generaldeps} --  and we will use expressions such as the above for defining the interpretations of dependencies.}  is a perfectly acceptable unary dependency, and the corresponding satisfaction condition in Team Semantics is: $\M \models_X \U t$ if and only if $X(t)$ is uncountable. 

If, as in the case of $\U$, the satisfaction conditions of a dependency atom $\D$ are not first order definable then obviously not all sentences of $\FO(\D)$ are equivalent to first order sentences: indeed, it is easy to see that the $\FO(\D)$ sentence $\forall \tuple x (\lnot R \tuple x \vee (R \tuple x \wedge \D \tuple x))$ characterizes precisely the class $\D$, which is not first order definable by assumption. A more interesting case is the one in which $\D$ -- understood as a class of models -- \emph{is} first order definable: 

\begin{Definition}[First Order Dependencies]
A $k$-ary dependency $\D$ is \emph{first order} if there exists some sentence $\phi(R)$ in the signature $\{R\}$, where $R$ is $k$-ary, such that $\D = \{\langle M, R\rangle : \langle M, R\rangle \models \phi(R)\}$.
\label{def:fodep}
\end{Definition}
\begin{Proposition}
Constancy atoms, functional dependence atoms, independence atoms and inclusion atoms are all first order. 
\end{Proposition}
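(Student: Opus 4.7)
The proof is essentially a matter of exhibiting, for each of the four listed atom families, an explicit first order sentence $\phi(R)$ in the signature $\{R\}$ whose models are exactly the class of relations that the atom's Team Semantics condition describes, and then verifying the equivalence directly from Definition \ref{def:depindep}. The plan is to handle the four cases in turn, in increasing order of notational complexity.

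For constancy atoms $=\!\!(x)$, the associated relation is unary, and the satisfaction condition $|X(x)|\leq 1$ is captured by $\phi(R) := \forall u \forall v (Ru \wedge Rv \to u = v)$. For functional dependence $=\!\!(\tuple x; \tuple y)$, writing $n=|\tuple x|$ and $m=|\tuple y|$ so that $R$ is $(n+m)$-ary, one takes
\[
\phi(R) := \forall \tuple x_1 \tuple y_1 \tuple x_2 \tuple y_2 \bigl((R\tuple x_1 \tuple y_1 \wedge R\tuple x_2 \tuple y_2 \wedge \tuple x_1 = \tuple x_2) \to \tuple y_1 = \tuple y_2\bigr),
\]
where $\tuple x_1 = \tuple x_2$ abbreviates the conjunction of the componentwise equalities. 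For independence atoms $\tuple x \perp_{\tuple y}\tuple z$, with $R$ of arity $|\tuple x|+|\tuple y|+|\tuple z|$, one takes
\[
\phi(R) := \forall \tuple x_1 \tuple y \tuple z_1 \tuple x_2 \tuple z_2 \bigl((R\tuple x_1 \tuple y \tuple z_1 \wedge R\tuple x_2 \tuple y \tuple z_2) \to R\tuple x_1 \tuple y \tuple z_2\bigr),
\]
which is a direct transcription of the witness condition in \textbf{TS-indep}. Finally, for inclusion atoms $\tuple x \subseteq \tuple y$, with $R$ of arity $2|\tuple x|$, one takes
\[
\phi(R) := \forall \tuple x_1 \tuple y_1 \bigl(R \tuple x_1 \tuple y_1 \to \exists \tuple x_2 \tuple y_2 (R \tuple x_2 \tuple y_2 \wedge \tuple x_1 = \tuple y_2)\bigr).
\]

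In each case the verification that $\langle M, X(\tuple t)\rangle \models \phi(R)$ iff the corresponding satisfaction condition of Definition \ref{def:depindep} holds is immediate by unpacking what tuples belong to $X(\tuple t)$: an element of $X(\tuple t)$ is precisely $s(\tuple t)$ for some $s \in X$, and the quantifier patterns in the sentences above mirror exactly the ``for all $s, s' \in X$'' and ``there exists $s'' \in X$'' quantifications appearing in the semantic clauses. There is no real obstacle to overcome; the only point requiring mild care is that the tuple $\tuple t$ supplied to a dependency atom may contain repeated variables, but this has no bearing on first order definability of the class $\D$ itself, which is formulated at the level of the ambient relation $R$ in the signature $\{R\}$ independently of how such a relation arises from a team.
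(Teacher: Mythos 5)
Your proposal is correct and follows essentially the same route as the paper: both proofs simply exhibit, for each atom, an explicit first order sentence $\phi(R)$ transcribing the Team Semantics clause at the level of the relation $R$. Your defining sentences match the paper's up to variable renaming (and your functional dependence formula is in fact the more transparently correct transcription of \textbf{TS-dep}, since the paper's version as printed swaps the roles of the determining and determined columns), so nothing further is needed.
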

\begin{proof}
Choose for $\phi(R)$ the expressions $\forall \tuple x \tuple x' (R \tuple x \wedge R \tuple x' \rightarrow \tuple x = \tuple x')$, \\$\forall \tuple x \tuple x' \tuple y (R \tuple x \tuple y \wedge R \tuple x' \tuple y \rightarrow \tuple x = \tuple x')$, $\forall \tuple x \tuple x' \tuple y \tuple z \tuple z' (R \tuple x \tuple y \tuple z \wedge R \tuple x' \tuple y \tuple z' \rightarrow R \tuple x \tuple y \tuple z')$ and \\$\forall \tuple x \tuple y (R \tuple x \tuple y \rightarrow \exists \tuple z R \tuple z \tuple x)$ respectively.
\end{proof}

The following properties, well studied in the literature, provide a useful starting point for the classification of dependencies: 
\begin{Definition}[Closure Properties for Dependencies]
Let $\D$ be any dependency. Then 
\begin{itemize}
\item $\D$ has the \emph{empty team property} if, for all sets of elements $M$, $\langle M, \emptyset\rangle \in \D$; 
\item $\D$ is \emph{downwards closed} if for all relations $R$ over a domain $M$, $\langle M, R\rangle \in \D$ and $Q \subseteq R$ imply $\langle M, Q\rangle \in \D$; 
\item $\D$ is \emph{upwards closed} if for all relations $R$ over some domain $M$, $\langle M, R\rangle \in \D$ and $Q \supseteq R$ imply $\langle M, Q\rangle \in \D$; 
\item $\D$ is \emph{union closed} if for all families $\{R_i : i \in I\}$ of relations of the same arity over the same domain $M$ such that $\langle M, R_i\rangle \in \D$ for all $i \in I$ we have that $\left \langle M, \bigcup_i R_i\right \rangle \in \D$ as well.

\end{itemize}
\end{Definition}
Aside from upwards closure, these properties are preserved by Team Semantics in the following sense: 
\begin{Proposition}
Let $\DD$ be a family of dependencies and let $\phi(\tuple x)$ be any formula of $\FO(\DD)$. Then, for all models $\M$ with signature containing the signature of $\phi(\tuple x)$, 
\begin{enumerate}
\item If all $\D \in \DD$ have the empty team property then $\M \models_\emptyset \phi$; 
\item If all $\D \in \DD$ are downwards closed and $X$ is a team such that $\M \models_X \phi$ then $\M \models_Y \phi$ for all $Y \subseteq X$;
\item If all $\D \in \DD$ are union closed and $(X_i : i \in I)$ is a family of teams such that $\M \models_{X_i} \phi$ for all $i \in I$ then $\M \models_{\bigcup_i X_i} \phi$.
\end{enumerate}
\label{propo:clos_cons}
\end{Proposition}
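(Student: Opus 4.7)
The plan is to prove all three parts simultaneously by structural induction on $\phi \in \FO(\DD)$. The base case for a literal $\alpha$ follows from \textbf{TS-lit} together with Proposition \ref{propo:FOform}: (1) is vacuous on the empty team, (2) holds because $\M \models_s \alpha$ for every $s \in X$ implies the same for every $s$ in any subteam, and (3) holds because every $s \in \bigcup_i X_i$ lies in some $X_i$. The base case for a dependency atom $\D \tuple t$ reduces directly to the corresponding closure hypothesis on $\D$: one has $\emptyset(\tuple t) = \emptyset$ for (1); $Y(\tuple t) \subseteq X(\tuple t)$ whenever $Y \subseteq X$ for (2); and $\left(\bigcup_i X_i\right)(\tuple t) = \bigcup_i X_i(\tuple t)$ for (3).

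For the inductive step, conjunction is immediate from \textbf{TS-$\wedge$} and the induction hypothesis. For disjunction, in (1) I would split $\emptyset = \emptyset \cup \emptyset$; in (2), given witnesses $X = Y_1 \cup Y_2$ with $\M \models_{Y_j} \psi_j$, for an arbitrary $Z \subseteq X$ I would set $Z_j = Z \cap Y_j$ and invoke the IH on each piece; in (3), given witnessing decompositions $X_i = Y_i \cup Z_i$ for each $i \in I$, the componentwise unions $\bigcup_i Y_i$ and $\bigcup_i Z_i$ split $\bigcup_i X_i$ and the IH closes the case. For the existential quantifier, (1) uses the unique (empty) choice function defined on $\emptyset$, for which $\emptyset[H/v] = \emptyset$; (2) restricts a witnessing $H$ to the subteam $Y$, observing that $Y[H_{|Y}/v] \subseteq X[H/v]$ and applying the IH; (3) combines the witnessing $H_i$ by $H(s) = \bigcup\{H_i(s) : i \in I,\ s \in X_i\}$, which is pointwise nonempty and satisfies $\left(\bigcup_i X_i\right)[H/v] = \bigcup_i X_i[H_i/v]$, so the IH applies. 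The universal case is the easiest, since duplication commutes with all three operations in play: $\emptyset[M/v] = \emptyset$, $Y[M/v] \subseteq X[M/v]$ whenever $Y \subseteq X$, and $\left(\bigcup_i X_i\right)[M/v] = \bigcup_i X_i[M/v]$.

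The argument is essentially routine and I do not expect any real obstacle. The only mildly delicate point is the existential case of (3), where one must verify that the combined choice function $H$ takes nonempty values (which is immediate since each $H_i(s)$ is nonempty) and that its supplementation coincides with the union of the $X_i[H_i/v]$; both facts fall out of unfolding the definition of supplementation.
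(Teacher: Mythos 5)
Your proof is correct and is precisely the routine structural induction that the paper dismisses with ``Straightforward induction''; all the case analyses (including the only genuinely delicate ones, the subteam splitting for disjunction in part 2 and the combined choice function for the existential quantifier in part 3, which is exactly where the lax semantics is needed) are handled properly.
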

\begin{proof}
Straightforward induction. 
\end{proof}
Closure properties such as these are useful to establish nondefinability relations between dependencies. 
\begin{Definition}[Definability of Dependencies]
Let $\DD$ be a family of dependencies and let $\E$ be another dependency. Then $\E$ is said to be \emph{definable} in $\FO(\DD)$ if for all tuples $\tuple t$ of terms there exists some formula $\phi(\tuple t)$ such that $\M \models_X \E \tuple t$ if and only if $\M \models_X \phi(\tuple t)$.\footnote{The choice of $\tuple t$ may force us to rename bound variables in $\phi$; but aside from that, it is not difficult to see that if such a $\phi$ exists then it works for any $\tuple t$.}
\label{def:def}
\end{Definition}
It is easy to verify that the dependencies discussed thus far all have the empty team property; that none of them is upwards closed; that constancy and functional dependency are downwards closed but not union closed; that inclusion is union closed but not downwards closed; and that independence is neither union closed nor downwards closed. By Proposition \ref{propo:clos_cons}, this allows us to infer at once for example that neither inclusion nor functional dependence are definable in terms of the other, and that neither of them alone suffices to define independence.\footnote{However, as shown in \cite{galliani12}, independence atoms are definable if we have \emph{both} inclusion atoms and functional dependence atoms.}

It follows from known results in the literature that functional dependence atoms, inclusion atoms and independence atoms are ``maximal'' among the corresponding classes of first order dependencies. More precisely, for all first order dependencies $\D$,
\begin{enumerate}
\item If $\D$ has the empty team property and is downwards closed then it is definable in terms of functional dependence atoms (\cite{kontinenv09}); 
\item If $\D$ has the empty team property then it is definable in terms of independence atoms (\cite{galliani12}); 
\item If $\D$ has the empty team property and is union closed then it is definable in terms of inclusion atoms (\cite{gallhella13}). 
\end{enumerate} 
In fact, the results concerning functional dependence atoms and independence atoms are stronger, in that they also hold for \emph{existential second order} (rather than first order) dependencies $\D$.\footnote{Analogously to Definition \ref{def:fodep}, a dependency $\D$ is existential second order if there exists some $\ESO$~ sentence $\phi(R)$ such that $\D = \{\langle M, R\rangle : \langle M, R\rangle \models \phi(R)\}$.} This is not, however, the case for Inclusion Logic: while all first order union-closed dependencies with the empty team property are definable in it, as discussed in \cite{gallhella13} it is possible to find existential second order union-closed dependencies with the empty team property which are not definable in Inclusion Logic.

We conclude this section by providing a few definitions and elementary results that will be used in this work. 

\begin{Definition}[$\exists \tuple v$ and $\forall \tuple v$]
Let $\tuple v = v_1 \ldots v_k$ be a tuple of pairwise distinct variables. Then, for all formulas $\phi$, we write $\exists \tuple v \phi$ as a shorthand for $\exists v_1 \exists v_2 \ldots \exists v_k \phi$, and we write $\forall \tuple v \phi$ as a shorthand for $\forall v_1 \forall v_2 \ldots \forall v_k \phi$. 
\end{Definition}
\begin{Proposition}
For all models $\M$, all families of dependencies $\DD$, all formulas $\phi \in \FO(\DD)$, all tuples $\tuple v = v_1 \ldots v_k$ of variables, all models $\M$ whose domain contains the signature of $\phi$ and all teams $X$ whose domain contains $\fv(\phi) \backslash \{v_1 \ldots v_k\}$ 
\[
\M \models_X \exists \tuple v \phi \Leftrightarrow \text{there is a } H: X \rightarrow \parts(M^k) \backslash \{\emptyset\} \text{ s.t. } \M \models_{X[H/\tuple v]} \phi
\]
and 
\[
\M \models_X \forall \tuple v \phi \Leftrightarrow \M \models_{X[M/\tuple v]} \phi.
\]
\end{Proposition}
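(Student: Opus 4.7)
The plan is to prove both equivalences by straightforward induction on $k$, the length of the tuple $\tuple v = v_1 \ldots v_k$. The base case $k=1$ is immediate: for the existential statement it is precisely rule \textbf{TS-$\exists$} of Definition \ref{def:teamsem}, and for the universal statement it is precisely rule \textbf{TS-$\forall$}.

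For the inductive step of the universal case, writing $\tuple v' = v_2 \ldots v_k$, we have $\forall \tuple v \phi = \forall v_1 \forall \tuple v' \phi$ by definition of the shorthand. By \textbf{TS-$\forall$}, $\M \models_X \forall v_1 \forall \tuple v' \phi$ iff $\M \models_{X[M/v_1]} \forall \tuple v' \phi$, which by the induction hypothesis is equivalent to $\M \models_{X[M/v_1][M/\tuple v']} \phi$. A direct check on the definition of duplication shows $X[M/v_1][M/\tuple v'] = X[M/\tuple v]$ (both teams consist of all extensions of assignments of $X$ by every possible $k$-tuple of values for $\tuple v$), closing the induction.

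For the inductive step of the existential case, by \textbf{TS-$\exists$} and the induction hypothesis, $\M \models_X \exists v_1 \exists \tuple v' \phi$ iff there exist $H_1 : X \to \parts(M) \setminus \{\emptyset\}$ and $H_2 : X[H_1/v_1] \to \parts(M^{k-1}) \setminus \{\emptyset\}$ with $\M \models_{X[H_1/v_1][H_2/\tuple v']} \phi$. Given such $H_1, H_2$, define $H : X \to \parts(M^k) \setminus \{\emptyset\}$ by
\[
H(s) = \{(m_1, \tuple m') : m_1 \in H_1(s),\ \tuple m' \in H_2(s[m_1/v_1])\};
\]
then $X[H/\tuple v] = X[H_1/v_1][H_2/\tuple v']$. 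Conversely, given a single $H : X \to \parts(M^k) \setminus \{\emptyset\}$ witnessing the right-hand side, set $H_1(s) = \{m_1 \in M : \exists \tuple m' \in M^{k-1},\ (m_1, \tuple m') \in H(s)\}$, and for each $s^* \in X[H_1/v_1]$ define $H_2(s^*) = \bigcup \{(m_1, \tuple m') \in H(s) : s \in X,\ s[m_1/v_1] = s^*\}$ projected onto its last $k-1$ coordinates; the same identity of teams follows.

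The only mildly delicate point, and the one where I would be careful, is the reverse direction of the existential step: if some variable $v_i$ already belongs to $\dom(X)$, distinct $s, s' \in X$ can yield the same assignment $s^* \in X[H_1/v_1]$, so $H_2(s^*)$ must aggregate the contributions of all such $s$; this is precisely why the union appears in its definition and why the laxity of the semantics (which permits $H_2(s^*)$ to be any nonempty set of $(k-1)$-tuples) is essential. Once that bookkeeping is in place, everything else is routine verification against the definitions of supplementation and duplication.
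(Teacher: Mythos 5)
Your proof is correct and follows exactly the route the paper intends: the paper's own proof is just the one-line remark ``Trivial induction on the length $k$ of the tuple $\tuple v$,'' and your argument is that induction carried out in full, including the careful aggregation needed to merge the two supplementation functions into one (and vice versa) when variables of $\tuple v$ may already occur in $\dom(X)$. Nothing further is needed.
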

\begin{proof}
Trivial induction on the length $k$ of the tuple $\tuple v$.
\end{proof}
\begin{Definition}[Selective Implication]
For all families of dependencies $\DD$, all formulas $\phi \in \FO(\DD)$ and all first order formulas $\theta \in \FO$, we write $\theta \hookrightarrow \phi$ as a shorthand for $(\lnot \theta) \vee (\theta \wedge \phi)$ where $\lnot \theta$ is the first order negation normal form expression equivalent to the negation of $\theta$. 
\label{def:selimp}
\end{Definition}
\begin{Proposition}
For all models $\M$, all teams $X$, all families of dependencies $\DD$, all formulas $\phi \in \FO(\DD)$ and all formulas $\theta \in \FO$, 
\[
\M \models_X \theta \hookrightarrow \phi \Leftrightarrow \M \models_{X \upharpoonright \theta} \phi.
\]
\label{propo:selimp}
\end{Proposition}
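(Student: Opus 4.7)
The plan is to unfold the shorthand $\theta \hookrightarrow \phi = (\lnot \theta) \vee (\theta \wedge \phi)$ and then apply the Team Semantics rules \textbf{TS-}$\vee$ and \textbf{TS-}$\wedge$ together with Proposition \ref{propo:FOform}, which pins down how first order formulas behave in Team Semantics (namely, pointwise as in Tarskian Semantics).

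For the right-to-left direction, assume $\M \models_{X \upharpoonright \theta} \phi$. The natural splitting of $X$ is $Y = X \upharpoonright \lnot \theta$ and $Z = X \upharpoonright \theta$. Every assignment in $X$ satisfies either $\theta$ or $\lnot \theta$ in the Tarskian sense, so $X = Y \cup Z$. By Proposition \ref{propo:FOform} we get $\M \models_Y \lnot \theta$ and $\M \models_Z \theta$. Combining the hypothesis $\M \models_Z \phi$ with \textbf{TS-}$\wedge$ yields $\M \models_Z \theta \wedge \phi$, and then \textbf{TS-}$\vee$ gives $\M \models_X (\lnot \theta) \vee (\theta \wedge \phi)$, as required.

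For the left-to-right direction, assume $\M \models_X (\lnot \theta) \vee (\theta \wedge \phi)$. By \textbf{TS-}$\vee$, there exist teams $Y, Z$ with $X = Y \cup Z$, $\M \models_Y \lnot \theta$, and $\M \models_Z \theta \wedge \phi$. By Proposition \ref{propo:FOform}, every assignment in $Y$ Tarski-satisfies $\lnot \theta$, and every assignment in $Z$ Tarski-satisfies $\theta$. Hence $Z \subseteq X \upharpoonright \theta$, and conversely any $s \in X \upharpoonright \theta$ must belong to $Z$ (otherwise $s$ lies only in $Y$, forcing $s$ to Tarski-satisfy $\lnot\theta$, contradicting $s \in X \upharpoonright \theta$). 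So $Z = X \upharpoonright \theta$. From \textbf{TS-}$\wedge$ we obtain $\M \models_Z \phi$, i.e.\ $\M \models_{X \upharpoonright \theta} \phi$.

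There is no real obstacle here; the only subtlety is verifying that the witness team $Z$ must coincide exactly with $X \upharpoonright \theta$ in the forward direction, which is handled by the pointwise behaviour granted by Proposition \ref{propo:FOform}. The argument does not require any closure property of the dependencies in $\DD$, so it applies to every $\phi \in \FO(\DD)$ uniformly.
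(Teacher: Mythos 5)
Your proof is correct and is exactly the argument the paper has in mind: the paper's own proof is the one-line remark that the claim ``follows easily from Proposition \ref{propo:FOform} and from the rule for disjunction in Team Semantics,'' and your write-up simply carries out that unfolding, including the key observation that the witness team $Z$ is forced to equal $X \upharpoonright \theta$ because first order formulas are satisfied pointwise. Nothing further is needed.
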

\begin{proof}
Follows easily from Proposition \ref{propo:FOform} and from the rule for disjunction in Team Semantics. 
\end{proof}
\begin{Definition}[Boolean Disjunction]
Let $\phi, \psi \in \FO(\DD)$ for some choice of $\DD$. Then we write $\phi \sqcup \psi$ for the $\FO(\DD \cup \{ =\!\!(\cdot)\})$-formula 
\[
\exists z_1 z_2 (=\!\!(z_1) \wedge =\!\!(z_2) \wedge (( z_1 = z_2 \wedge \phi) \vee (z_1 \not = z_2 \wedge \psi)))
\]
where $z_1$ and $z_2$ are two new variables not occurring in $\phi$ or $\psi$ and $=\!\!(z_i)$ is the constancy atom of Definition \ref{def:const}, which holds in a team $X$ if and only if $z_i$ takes at most one value in it. 
\label{def:booldisj}
\end{Definition}
\begin{Proposition}
For all models $\M$, all families of dependencies with the empty team property $\DD$, all formulas $\phi, \psi \in \FO(\DD)$ and all teams $X$, 
\[
\M \models_X \phi \sqcup \psi \Leftrightarrow \M \models_X \phi \text{ or } \M \models_X \psi. 
\]
\label{propo:booldisj}
\end{Proposition}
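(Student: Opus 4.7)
The plan is to unfold Definition~\ref{def:booldisj} and observe that the constancy atoms $=\!\!(z_1)$ and $=\!\!(z_2)$ force the witnessing supplementation to assign a single pair of values $(a,b)$ to $(z_1,z_2)$ across the entire team, whereupon the literal $z_1 = z_2$ vs.\ $z_1 \neq z_2$ selects exactly one of the two inner disjuncts; locality (Proposition~\ref{propo:local}) then transfers the conclusion back to $X$, since $z_1, z_2$ were chosen fresh.

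For the forward direction, suppose $\M \models_X \phi \sqcup \psi$ and let $H \colon X \to \parts(M^2) \setminus \{\emptyset\}$ witness the outer $\exists z_1 z_2$. Write $Y := X[H/z_1 z_2]$. If $X = \emptyset$, then $Y = \emptyset$ and both sides of the claimed equivalence hold by the empty team property (Proposition~\ref{propo:clos_cons}); otherwise $\M \models_Y =\!\!(z_1)$ and $\M \models_Y =\!\!(z_2)$ fix unique values $a, b \in M$ taken by $z_1$ and $z_2$ throughout $Y$. By the team-semantic rule for $\vee$ I split $Y = Y_1 \cup Y_2$ with $\M \models_{Y_1} z_1 = z_2 \wedge \phi$ and $\M \models_{Y_2} z_1 \neq z_2 \wedge \psi$; applying Proposition~\ref{propo:FOform} to the first-order literals forces $Y_2 = \emptyset$ when $a = b$ and $Y_1 = \emptyset$ when $a \neq b$, so $Y$ itself satisfies $\phi$ or $\psi$ in the two cases. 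Since $z_1, z_2$ do not occur free in $\phi$ or $\psi$ and $Y_{|\fv(\phi)} = X_{|\fv(\phi)}$ (because every $H(s)$ is nonempty), locality delivers $\M \models_X \phi$ or $\M \models_X \psi$.

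For the converse, assume $\M \models_X \phi$, fix any $a \in M$, and set $H(s) = \{(a,a)\}$ for every $s \in X$; let $Y := X[H/z_1 z_2]$. Then $\M \models_Y =\!\!(z_1)$, $\M \models_Y =\!\!(z_2)$, and $\M \models_Y z_1 = z_2$ hold by inspection, $\M \models_Y \phi$ follows from $\M \models_X \phi$ by locality, and the decomposition $Y = Y \cup \emptyset$ witnesses the inner disjunction because $\M \models_\emptyset z_1 \neq z_2 \wedge \psi$ holds by the empty team property---which is exactly where the hypothesis on $\DD$ enters. The case $\M \models_X \psi$ is symmetric, with $H(s) = \{(a,b)\}$ for any chosen $a, b \in M$ with $a \neq b$.

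The only subtle point is this last symmetric case: it requires $|M| \geq 2$. On a one-element model the formula $z_1 \neq z_2$ has no nonempty satisfying team, so the $\psi$-branch of the inner disjunction is inaccessible and the equivalence fails in its full generality; this mild gap is handled in the standard Team Semantics convention of restricting attention to models of size at least two. Apart from this, the argument is pure bookkeeping with locality, the literal rule, and the empty team property, with no other real obstacle.
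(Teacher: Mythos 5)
Your proof is correct and matches the paper's, which simply declares the result ``straightforward from definitions'': you carry out exactly the intended unfolding of Definition~\ref{def:booldisj}, invoking the empty team property precisely where the paper's parenthetical remark says it is needed. Your observation that the $\psi$-only converse direction requires $|M| \geq 2$ is a fair catch --- the paper leaves that assumption implicit here, although it does state it explicitly in the analogous construction of Lemma~\ref{lemma:union}.
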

\begin{proof}
Straightforward from definitions (observe, however, that the requirement that all $\D \in \DD$ have the empty team property cannot be removed). 
\end{proof}

\begin{Proposition}[Positive Occurrences of Relations]
Let $\DD$ be any family of dependencies and let $\phi$ be any formula of $\FO(\DD)$ in which some relation symbol $R$ occurs only positively.\footnote{Since all our expressions are in Negation Normal Form, this is the same as saying that no negated literal $\lnot R \tuple t$ appears in $\phi$.} Then $\phi$ is \emph{upwards closed} in $R$, in the sense that 
\[
\M \models_X \phi, R^\M \subseteq Q \Rightarrow \M[Q/R] \models_X \phi
\]
for all suitable models $\M$, teams $X$ and relations $Q$. 
\label{propo:posTS}
\end{Proposition}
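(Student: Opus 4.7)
The proof is by straightforward structural induction on $\phi$, and the plan is to go through each syntactic case and verify that moving from $\M$ to $\M[Q/R]$ preserves satisfaction, using the positivity of $R$ where needed.

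For the base cases, I would split into subcases depending on the literal. If the literal is $R \tuple t$ itself, then $\M \models_X R \tuple t$ means $s(\tuple t) \in R^\M$ for every $s \in X$; since $R^\M \subseteq Q$, also $s(\tuple t) \in Q$ for every $s \in X$, so $\M[Q/R] \models_X R \tuple t$. The hypothesis of positivity rules out the case of a literal $\lnot R \tuple t$, which is precisely the case that would fail. For literals involving some other relation symbol (or equality), the interpretation is unchanged in passing from $\M$ to $\M[Q/R]$, so satisfaction is preserved trivially. For a dependency atom $\D \tuple t$ with $\D \in \DD$, the satisfaction condition $\langle M, X(\tuple t)\rangle \in \D$ depends only on the domain $M$ and on the team $X$, not on the interpretation of any relation of $\M$; hence it is preserved as well.

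For the inductive step, conjunction and disjunction are immediate: the same witness decomposition $X = Y \cup Z$ for disjunction, and componentwise application of the induction hypothesis for conjunction, work verbatim since the monotonicity hypothesis on $R$ is preserved by restricting to subteams. For the existential quantifier case $\exists v \psi$, if $H : X \to \parts(M) \setminus \{\emptyset\}$ witnesses $\M \models_{X[H/v]} \psi$, then the very same $H$ (note that its codomain $\parts(M)$ depends only on the domain $M$, which does not change) gives a team $X[H/v]$ satisfying $\psi$ in $\M[Q/R]$ by the induction hypothesis. The universal case $\forall v \psi$ is analogous, since the duplicated team $X[M/v]$ depends only on the domain.

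The proof has no real obstacle: the only point that deserves care is making sure the induction covers dependency atoms, and the key observation there is that Rule \textbf{TS-$\D$} refers only to $X(\tuple t)$ and the underlying set $M$, neither of which involves the interpretation of $R$. With this observation in hand the induction is entirely mechanical, and the statement follows.
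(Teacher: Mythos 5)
Your proof is correct and takes exactly the route the paper intends: the paper's own proof is just ``Trivial by structural induction,'' and your write-up fills in precisely the expected cases, with the key observations (positivity excludes $\lnot R\tuple t$, dependency atoms via \textbf{TS-$\D$} depend only on $M$ and $X(\tuple t)$, and the quantifier witnesses transfer unchanged since the domain does not change) all in place.
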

\begin{proof}
Trivial by structural induction.
\end{proof}


\subsection{Strongly First Order and Safe Dependencies}
As already mentioned, in \cite{galliani12} it was shown that the constancy atoms of Definition \ref{def:const} do not increase the expressive power of First Order Logic with respect to sentences, in the sense that every sentence of the logic $\FO(=\!\!(\cdot))$ obtained by adding these atoms to First Order Logic is equivalent to some first order sentence. In other words, First Order Logic is not a natural ``stopping point'' in the family of the logics based on Team Semantics, as it is possible to find expressions (e.g. constancy atoms) that as per Corollary \ref{coro:const} cannot be defined in terms of it but however do not add to the expressive power of its sentences. 

Thus, we say that constancy atoms are \emph{strongly first order} according to the following definition:
\begin{Definition}[Strongly First Order Dependencies and Families]
Let $\DD$ be a family of first order dependencies. Then $\DD$ is \emph{strongly first order} if every sentence of $\FO(\DD)$ is equivalent to some first order sentence. 

A single dependency $\D$ is said to be strongly first order if the singleton $\{\D\}$ is strongly first order in the above sense. 
\label{def:stronglyFO}
\end{Definition}

Constancy atoms are not the only strongly first order dependencies, and the logic $\FO(=\!\!(\cdot))$ is not a natural ``stopping point'' in the above sense\footnote{This notion will be made more precise in a moment.} either. This follows at once from the following result from \cite{galliani2015upwards}: 
\begin{Theorem}
Let $\DD^\uparrow$ be the family of all first order upwards closed dependencies. Then the family $\DD^\uparrow \cup \{=\!\!(\cdot)\}$, which contains all those dependencies and also constancy atoms, is strongly first order. 
\label{thm:upwards}
\end{Theorem}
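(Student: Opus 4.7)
The plan is to reduce every sentence $\phi \in \FO(\DD^\uparrow \cup \{=\!\!(\cdot)\})$ to an equivalent first-order sentence by putting $\phi$ in a normal form that separates the two kinds of non-classical atoms and then collapses each of them using its closure properties. As a warm-up, recall that every such sentence is automatically in $\ESO$ via the standard Skolemisation of the team semantics; the actual work is showing that this $\ESO$ translation collapses to $\FO$ for these particular atom families.

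The first ingredient is to handle constancy: every constancy atom $=\!\!(x)$ reaching a leaf of the evaluation tree forces the Skolem ``function'' for $x$ to be constant on the subteam that reaches that leaf. By a block-of-quantifiers manipulation, using the Boolean disjunction $\sqcup$ of Definition \ref{def:booldisj} to absorb case distinctions and Proposition \ref{propo:local} to keep variable dependencies tight, this constant can be pulled to the front and thus replaced by an ordinary first-order existentially quantified element. The second ingredient is to handle upwards closed atoms: since the satisfaction of $\D \tuple t$ in a team $X$ depends only on the relation $X(\tuple t)$ and is monotone in it, each such atom behaves like a first-order formula positive in a fresh relation variable. Proposition \ref{propo:posTS} then lets us commute upwards closed atoms past universal quantifiers and past disjunctions in a controlled way, and eventually express each atom directly as a first-order statement about $\M$ and the constants extracted in the previous step.

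Combining these ingredients, I would prove by induction on formula structure that every sentence of $\FO(\DD^\uparrow \cup \{=\!\!(\cdot)\})$ is equivalent to one in the normal form $\exists \tuple c \bigl(\bigwedge_i =\!\!(c_i) \wedge \psi(\tuple c)\bigr)$, where $\psi(\tuple c)$ is a first-order sentence over $\M$ with parameters $\tuple c$; evaluated on the initial team $\{\epsilon\}$, this is the same as the ordinary first-order sentence $\exists \tuple c \, \psi(\tuple c)$. The main obstacle will be the interaction between team-splitting disjunctions and the two atom families: constancy is downwards closed at the formula level while upwards closed atoms are, by definition, upwards closed, so arbitrary splits of the team pull the normal form in opposite directions. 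The plan for managing this is to replace every ordinary disjunction by the Boolean disjunction $\sqcup$ wherever feasible, and for any irreducible splits that remain to encode the branch choice by yet another constancy-quantified label variable, so that the splitting is ultimately undone by a first-order case distinction on that label.
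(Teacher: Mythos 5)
A preliminary remark: the paper does not actually prove Theorem \ref{thm:upwards}; it imports it from \cite{galliani2015upwards} and uses it as a black box throughout. So your proposal is competing with the (substantial, multi-step) proof in that reference rather than with anything in this paper. Measured against what such a proof must accomplish, your sketch has a genuine gap at its core.

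The constancy half of your plan is essentially sound and standard: at the sentence level, a value forced to be constant on (a subteam of) the evaluation can be pulled to the front and replaced by an ordinary first-order existential, and the target normal form $\exists \tuple c\,(\bigwedge_i =\!\!(c_i) \wedge \psi(\tuple c))$ is the right shape. The problem is that the hard part of the theorem is precisely the claim that $\psi$ can be taken \emph{first order}, i.e.\ that the upwards closed atoms can be eliminated, and your argument for this step is not sound as stated. An atom $\D\tuple t$ sitting under existential quantifiers and team-splitting disjunctions is evaluated on a team determined by the choice functions $H$ and the splits $X = Y \cup Z$ introduced above it; these are hidden second-order objects, and the whole content of the theorem is that this second-order quantification collapses. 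Proposition \ref{propo:posTS} does not do that work: it concerns monotonicity in the interpretation of a relation symbol and gives no licence to ``commute'' a dependency atom past a quantifier or a disjunction --- any such commutation changes the team on which the atom is evaluated and hence its meaning ($\forall x\,(\alpha(x) \vee \D x)$ is not equivalent to anything obtained by moving $\D x$ outward). The missing idea is an argument that, because the atoms are upwards closed and the semantics is lax, the witnessing choices can be \emph{canonicalized} so that the team reaching each atom occurrence is first-order definable (roughly: every assignment consistent with the first-order constraints along a branch may be sent to that branch, and by upwards closure this maximal strategy is as good as any). Nothing in your sketch supplies this, and without it nothing has been proved.

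Your fallback for the disjunction problem is also broken. A split $X = Y \cup Z$ lets each assignment choose a disjunct independently (and even go to both sides); a constancy-labelled case distinction, or a replacement of $\vee$ by the Boolean disjunction $\sqcup$, forces the \emph{entire} team into one disjunct. Already $\forall x\,(Px \vee \D x)$ with $\D$ upwards closed cannot be rewritten with a constant branch label: the split needed there is genuinely assignment-dependent. So the proposed mechanism for handling the ``irreducible splits'' fails exactly where the difficulty lies, and this part of the plan would need to be replaced by the canonicalization argument described above rather than patched.
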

Using this result, it is not difficult to find additional strongly first order dependencies. For instance, as mentioned in \cite{galliani2015upwards}, the contradictory negations of inclusion dependencies 
\[
	\M \models_X \tuple x \not \subseteq \tuple y \Leftrightarrow \exists s \in X \forall s' \in X s(\tuple x) \not = s'(\tuple y)
\]
are all strongly first order, since they are definable (in the sense of Definition \ref{def:def}) in terms of constancy atoms and upwards closed dependencies. To the knowledge of the author all strongly first order dependencies known so far are also definable in terms of $\FO(\DD^\uparrow, =\!\!(\cdot))$.\footnote{We commit a minor abuse of notation and write  $\FO(\DD^\uparrow, =\!\!(\cdot))$ rather than  $\FO(\DD^\uparrow \cup \{=\!\!(\cdot)\})$.} This justifies the following\\

\noindent\textbf{Conjecture 1:} A dependency $\D$ is strongly first order if and only if it is definable in $\FO(\DD^\uparrow, =\!\!(\cdot))$.\\

As already mentioned, the main result of this work will be the proof of a special case of this conjecture. \\

In \cite{galliani2018safe}, the notion of strong first orderness was generalized to the following notion of \emph{safety}:
\begin{Definition}[Safe Dependencies and Families]
Let $\LL$ be a logic based on Team Semantics\footnote{For the purposes of this work, we can always assume that $\LL$ is of the form $\FO(\DD)$ for some choice of $\DD$; but nothing prevents applying this definition to another logic -- for instance, to some fragment of First Order Logic or to some extension of it by additional connectives and operators.} and let $\EE$ be a family of dependencies. Then $\EE$ is \emph{safe} for $\LL$ if every sentence of $\LL(\EE)$ is equivalent to some sentence of $\LL$.  

A single dependency $\E$ is said to be safe for $\LL$ if the singleton $\{\E\}$ is safe for $\LL$ in the above sense. 

A dependency $\E$ or a dependency family $\EE$ is said to be safe for a dependency $\D$ or for a dependency family $\DD$ if it is safe for the logic $\FO(\D)$ (resp. $\FO(\DD)$). 
\end{Definition}
By definition, a dependency (or a family of dependencies) is strongly first order if and only if it is safe for $\FO$.  The notion of safety can be used to make precise the informal concept of ``natural stopping point'' mentioned above:
\begin{Definition}[Definitionally Closed Logic]
A logic $\LL$, based on Team Semantics, is \emph{definitionally closed} if and only if the following two properties are equivalent for all dependencies $\D$: 
\begin{enumerate}
\item $\D$ is safe for $\LL$; 
\item $\D$ is definable in $\LL$. 
\end{enumerate}
\end{Definition}

Thus, Corollary \ref{coro:const} and the fact that constancy is strongly first order imply at once that First Order Logic is not definitionally closed, while \textbf{Conjecture 1} would have as a direct consequence that $\FO(\DD^\uparrow, =\!\!(\cdot))$ is definitionally closed (and is, in fact, the only ``definitional closure'' of $\FO$). More in general, a full characterization of the definitionally closed extensions of First Order Logic would be a very valuable contribution to the classification of logics based on Team Semantics. 

Safety, however, is a delicate property: in particular, as shown in \cite{galliani2018safe}, constancy dependencies are not safe for $\FO(\subseteq_1)$, where $\subseteq_1$ is the family of only \emph{unary} inclusion dependencies\footnote{More precisely, $\FO(\subseteq_1)$ adds to First Order Logic all inclusion atoms $v_1 \subseteq v_2$, where $v_1$ and $v_2$ are single variables (not tuples of variables).}, despite being as already mentioned strongly first order (that is, safe for $\FO$). Thus, a dependency $\D$ may be safe for a logic $\LL$ but unsafe for some other logic $\LL'$ which strictly contains $\LL$. This is reminiscent of certain phenomena in the theory of second-order generalized quantifiers \cite{kontinen2010definability},\footnote{The author thanks an anonymous reviewer for pointing out this connection.} and it suggests that the problem of fully characterizing safety relations and definitionally closed logics will not be of easy solution. 

Nonetheless, studying the properties of safety is a fruitful endeavour. In particular, in this work we will prove that strongly first order dependencies are safe for $\FO(\DD)$ whenever $\DD$ contains only downwards closed dependencies, and use this result to prove -- as already mentioned -- a special case of \textbf{Conjecture 1}. 
\subsection{Relativizable Dependencies}
\label{sec:relativizable}
Sometimes it may be necessary to check whether a dependency holds not with respect to the current model, but with respect to some submodel thereof. This justifies the following definitions:\footnote{This notion of relativization of dependencies is closely related to the notion of relativization of formulas in Inclusion-Exclusion Logic discussed by R\"onnholm in \cite{ronnholm2018arity}.}	
\begin{Definition}[Relativized Dependencies]
	Let $P$ be any unary relation symbol, let $\D$ be any $k$-ary dependency, and let $\tuple t$ be a tuple of terms of length $k$. Then for all first order models $\M$ and all teams $X$ over $\M$ with the variables that appear in $\tuple t$ in their domain, $\M \models_X \D^P \tuple t$ if and only if 
	\begin{enumerate}
		\item For all $t \in \tuple t$, $X(t) \subseteq P^\M$;\footnote{This condition is actually a consequence of the next one, but we state it explicitly for clarity.} 
		\item $\langle P^\M, X(\tuple t)\rangle \in \D$. 
	\end{enumerate}
\end{Definition}
\begin{Definition}[Relativizable Dependencies]
	A dependency $\D$ is relativizable if every sentence of $\FO(\D^P)$ is equivalent to some sentence of $\FO(\D)$ over the same signature. 
\end{Definition}

It follows from the above definitions that if $\D$ is relativizable and strongly first order then every sentence constructed out of relativized dependency atoms $\D^P \tuple t$ and first order connectives and literals is equivalent to some first order sentence.

All the dependency atoms widely studied in the literature -- functional dependence, inclusion, independence and so on -- are relativizable. This can be verified by observing that they have the following, stronger property: 
\begin{Definition}[Closed-World Dependencies]
	A dependency $\D$ is \emph{closed} \\\emph{world} if, for all sets of elements $M$ and all relations $R$, if $M' = \{m \in M: \exists \tuple a \in R \text{ s.t. } m \in \tuple a\}$ is the set of all elements occurring in any tuple of $R$ then 
	\[
		\langle M, R\rangle \in \D \Leftrightarrow \langle M', R \rangle \in \D.
	\]
\end{Definition}
This notion is related to the \emph{closed-world assumption} employed in database theory and knowledge representation, and it is also similar (albeit not identical) to the notion of \emph{dependencies closed under substructures} of \cite{kontinen2016decidability}: in short, if $\D$ is closed-world the validity of some atom $\D \tuple v$ in a team $X$ cannot be affected by the existence (or non-existence) in the model of elements that do not appear as possible values of $\tuple v$ in $X$.

It is easy to see that the dependencies examined so far are all closed-world. A first order, unary dependency notion that is not closed-world is \emph{non-totality} $\NT = \{\langle M, P\rangle : P \not = M\}$,
 corresponding to the atoms $\M \models_X \NT (t) \Leftrightarrow X(t) \not = M$.
Indeed, if $\langle M, P\rangle \in \NT$ and we restrict $M$ to the set $M'$ of the elements which occur in $P$ then $(M', P) \not \in \NT$. However, non-totality is definable in terms of constancy, which \emph{is} closed-world: 
\[
	\M \models_X \NT(t) \Leftrightarrow \M \models_X \exists z (=\!\!(z) \wedge z \not = t).
\]
Furthermore, despite not being closed-world, non-totality is still relativizable: indeed, it is straightforward to check that $\NT^P(t)$ is logically equivalent to $P t \wedge \forall x ( (x = t \vee \lnot P x) \hookrightarrow \NT(x))$.\footnote{See Definition \ref{def:selimp} and Proposition \ref{propo:selimp} for the interpretation of the selective implication $\hookrightarrow$. The intuition here is that we force $x$ to take all values outside $P$ as well as all possible values of $t$: so if $x$ does not take all possible values, there must be some value \emph{inside $P$} that $t$ does not take.}

\begin{Proposition}
	If a dependency $\D$ is closed world then it is relativizable.
\end{Proposition}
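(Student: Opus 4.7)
The plan is to give an explicit defining formula. For a closed-world $k$-ary dependency $\D$ and a tuple $\tuple v = v_1 \ldots v_k$, I would take
\[
\phi(\tuple v, P) := \D \tuple v \wedge \bigwedge_{i=1}^k P v_i
\]
as the witness formula, which clearly lies in $\FO(\D)$.

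The verification is then a direct unpacking of team semantics. For the forward direction, if $\M \models_X \phi(\tuple v, P)$, then by \textbf{TS-$\wedge$} both conjuncts hold. Proposition \ref{propo:FOform} applied to the first order conjunct $\bigwedge_i P v_i$ yields $s(v_i) \in P$ for every $s \in X$ and every $i$, i.e.\ $X(v_i) \subseteq P$. The second conjunct gives $\langle M, X(\tuple v) \rangle \in \D$. Let $M' = \{m \in M : \exists \tuple a \in X(\tuple v),\ m \in \tuple a\}$ be the set of elements actually occurring in $X(\tuple v)$. Since $X(v_i) \subseteq P$ for every $i$, we have $M' \subseteq P \subseteq M$. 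Applying the closed-world hypothesis twice -- once to the pair $(M, X(\tuple v))$ and once to the pair $(P, X(\tuple v))$ -- yields
\[
\langle M, X(\tuple v)\rangle \in \D \;\Leftrightarrow\; \langle M', X(\tuple v)\rangle \in \D \;\Leftrightarrow\; \langle P, X(\tuple v)\rangle \in \D,
\]
so $\langle P, X(\tuple v)\rangle \in \D$, hence $\M \models_X \D^P \tuple v$. The converse runs in reverse: $\M \models_X \D^P \tuple v$ gives $X(v_i) \subseteq P$ (which via Proposition \ref{propo:FOform} validates $\bigwedge_i P v_i$) and $\langle P, X(\tuple v)\rangle \in \D$, and the same two applications of closed-world now transport the latter from the domain $P$ back up to $M$, yielding $\M \models_X \D \tuple v$.

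The only potentially delicate point is the edge case $X = \emptyset$, which I would simply note explicitly: then $X(\tuple v) = \emptyset$ and $M' = \emptyset$, and closed-world still applies to reduce both $\langle M, \emptyset\rangle$ and $\langle P, \emptyset\rangle$ to $\langle \emptyset, \emptyset\rangle$, so the biconditional is unaffected. Beyond this, the argument has essentially no obstacle -- it is a bookkeeping exercise built around the one nontrivial observation that the closed-world property implies membership in $\D$ is insensitive to whether the ambient domain is $M$ or any intermediate set $P$ containing all elements that actually appear in $X(\tuple v)$.
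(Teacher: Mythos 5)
Your proof is correct and follows essentially the same route as the paper: the defining formula $\left(\bigwedge_{i} P v_i\right) \wedge \D \tuple v$ is exactly the one the paper uses, and the verification via the intermediate set $M'$ of elements actually occurring in $X(\tuple v)$, applying the closed-world property once for $M$ and once for $P$, is the paper's argument verbatim. The explicit treatment of the empty-team edge case is a harmless addition.
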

\begin{proof}
	If $\D$ is closed world and all elements that appear in $X(\tuple v)$ are in $P$, 
	\begin{align*}
		\M \models_X \D^P \tuple v & 
		\Leftrightarrow \langle P, X(\tuple v)\rangle \in \D\\
		& \Leftrightarrow \langle M', X(\tuple v)\rangle \in \D\\
		& \Leftrightarrow \langle M, X(\tuple v)\rangle \in \D
		\Leftrightarrow \M \models_X \D \tuple v 
	\end{align*}
	where $M' = \{s(v_i) : s \in X, v_i \in \tuple v\}$ is the set of all values that variables in $\tuple v$ take in $X$. Thus, $\D^P \tuple v$ is definable in $\FO(\D)$ simply as $\left(\bigwedge_{v_i \in \tuple v} P v_i\right) \wedge \D \tuple v$, and hence $\D$ is relativizable. 
\end{proof}

As we saw, it is easy to find dependencies which are not closed-world, like non-totality. However, it is less obvious to find examples of dependencies that are not relativizable, which led the author to pose the following\\

\noindent \textbf{Problem (Solved by Fausto Barbero):} Find a dependency $\D$ that is not relativizable, or prove that all dependencies are relativizable.\\

Fausto Barbero, in a personal communication, pointed out that a counting argument shows that there exist dependencies (not necessarily first order) that are not relativizable. A concrete example is given by the unary dependency $\mathbf{I}_\infty = \{\langle M, P\rangle: M \text{ is infinite}\}$, for which $\M \models_X \mathbf{I}_\infty(v)$ iff the model $\M$ is infinite regardless of $X$. Indeed, the $\FO(\mathbf{I}_\infty^{P})$ sentence $\exists v \mathbf{I}_\infty^P(v)$ is true if and only if $P$ is infinite. However, there is no $\FO(\mathbf{I}_\infty)$ sentence over the signature $\{P\}$ that satisfies this property, because in an infinite model every occurrence $\mathbf{I}_\infty(v)$ of $\mathbf{I}_\infty$ is equivalent to $\top$ and a standard back-and-forth argument shows that there is no first order sentence which is true in an infinite model if and only if $P$ is infinite. The following variant of the above problem is, to the knowledge of the author, still open, and if solved it would suffice to remove the requirement of relativizability from the results of this paper:\\

\noindent \textbf{Open Problem:} Find a strongly first order dependency $\D$ which is not relativizable, or prove that all such dependencies are relativizable. 
\section{Strongly first order dependencies are safe for downwards closed dependencies}
As already mentioned, constancy dependencies are strongly first order -- that is, safe for $\FO$ -- but they are not safe for unary inclusion dependencies. However, in this section, we will see that strongly first order dependencies are safe for \emph{downwards closed} dependencies. Let us begin by recalling an easy variant of Lyndon's Theorem: 
\begin{Proposition}
	Let $\theta(S_1 \ldots S_n)$ be a first order sentence in a signature containing the symbols $S_1 \ldots S_n$ (and possibly others) which is \emph{upwards closed} in all $S_i$, in the sense that, for any model $\M$, if $\M \models \theta(S_1 \ldots S_n)$ and $S'_i \supseteq S_i$ for all $i$ then $\M \models \theta(S'_1 \ldots S'_n)$ Then $\theta(S_1 \ldots S_n)$ is equivalent to some first order $\theta'(S_1 \ldots S_n)$ in which all $S_i$ occur only positively. 
	\label{propo:lyndon}
\end{Proposition}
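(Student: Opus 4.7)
The plan is to prove the proposition by a compactness/diagram argument in the style of Lyndon's interpolation theorem. Let $T$ denote the set of all first order sentences $\psi$ over the signature of $\theta$ in which every $S_i$ occurs only positively and such that $\theta \models \psi$. The goal is to show $T \models \theta$; once this is established, compactness together with closure of $T$ under finite conjunction yields a single $\theta' \in T$ with $\theta' \models \theta$, and since $\theta \models \theta'$ by the definition of $T$, we obtain $\theta \equiv \theta'$ with $\theta'$ positive in each $S_i$.

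To prove $T \models \theta$, fix $\M \models T$ and expand the signature by a constant $c_a$ for each $a$ in the domain of $\M$. Let $\Gamma(\M)$ be the set of all sentences in the expanded signature that are \emph{negative} in every $S_i$ (every occurrence of every $S_i$ is inside a negated atom) and true in $(\M, (a)_{a \in M})$. I would then show that $\Gamma(\M) \cup \{\theta\}$ is consistent: if not, some finite $\gamma_1, \ldots, \gamma_k \in \Gamma(\M)$ would satisfy $\theta \models \lnot \gamma_1 \vee \cdots \vee \lnot \gamma_k$; universally closing out the constants $c_a$ turns this into a sentence in which every $S_i$ occurs only positively and which is entailed by $\theta$, hence belongs to $T$. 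But by construction it is false in $\M$, contradicting $\M \models T$.

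Now take $\N \models \Gamma(\M) \cup \{\theta\}$ and set $h(a) := c_a^{\N}$. Because every $S_i$-free sentence and its negation are both trivially negative in the $S_i$, $h$ elementarily embeds the $S_i$-free reduct of $\M$ into that of $\N$. Moreover, for every tuple $\vec a \notin S_i^{\M}$ the atom $\lnot S_i(c_{\vec a})$ lies in $\Gamma(\M)$, so $S_i^{\N} \cap h(M)^{|S_i|} \subseteq h(S_i^{\M})$. Expanding $\N$ to $\N^*$ by adjoining $h(S_i^{\M})$ to each $S_i^{\N}$ preserves $\theta$ by the hypothesized upwards monotonicity, and the substructure of $\N^*$ on $h(M)$ is isomorphic to $\M$ in the full signature.

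The main obstacle is the final step: concluding $\M \models \theta$ from $\N^* \models \theta$ when $\M$ is merely realized as a substructure of $\N^*$, since $\theta$ need not be preserved under substructures. I would close this gap with the standard chain / back-and-forth construction that also appears in the proof of Lyndon's theorem proper: iterate the diagram construction above to build an elementary chain $\M = \M_0 \preceq \M_1 \preceq \cdots$ (in the full signature) interleaved with models $\N_i \models \theta$ arranged so that each $\M_{i+1}$ elementarily absorbs the relevant finite fragment of $\N_i$. Taking the union yields an elementary extension of $\M$ that satisfies $\theta$, and elementarity then transports $\theta$ back to $\M$, completing the argument.
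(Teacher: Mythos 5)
Your route is genuinely different from the paper's: the paper simply invokes the known preservation theorem (Theorem 8.3.3 of Hodges) stating that a sentence not equivalent to one positive in the $S_i$ fails to be preserved under bijective homomorphisms fixing the other relations, and derives the claim in a few lines; you are instead reproving that preservation theorem from scratch via diagrams and chains. Your reduction to showing $T \models \theta$, the consistency argument for $\Gamma(\M) \cup \{\theta\}$, and the construction of $\mathfrak N^*$ with $\M$ sitting inside it as an induced substructure are all correct.

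The gap is in the final chain step, and it is not merely a matter of omitted routine detail. First, ``each $\M_{i+1}$ elementarily absorbs the relevant finite fragment of $\mathfrak N_i$'' is not enough: for the limit to work you must absorb \emph{all} of $\mathfrak N_i$ into $\M_{i+1}$ (so that the two chains have the same union as a set), by a map under which the $S_j$ can only grow and which, composed with the next forward map, makes $\mathfrak N_i \preceq \mathfrak N_{i+1}$ elementary in the \emph{full} signature. This requires a with-parameters version of your one-step lemma in \emph{both} directions --- you must check at every stage that every sentence positive in the $S_j$ with parameters from $\M_i$ that holds in $\mathfrak N_i$ also holds in $\M_i$ --- and this is where the real work of the preservation theorem lives; your sketch does not indicate how it is maintained. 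Second, the concluding sentence ``taking the union yields an elementary extension of $\M$ that satisfies $\theta$'' conflates the two chains: $\bigcup_i \M_i$ is an elementary extension of $\M$ but is not known to satisfy $\theta$, while $\bigcup_i \mathfrak N_i$ satisfies $\theta$ but is not an elementary extension of $\M$. What the construction actually delivers is that the two unions share their domain and their $S$-free reduct, with each $S_j^{\bigcup_i \mathfrak N_i} \subseteq S_j^{\bigcup_i \M_i}$; one further application of upward closure is then needed to transfer $\theta$ to $\bigcup_i \M_i$ before elementarity brings it down to $\M$. If you want to avoid the chain entirely, either cite the preservation theorem as the paper does, or observe that Lyndon's interpolation theorem applied to the entailment $\theta(S_1^* \ldots S_n^*) \wedge \bigwedge_i \forall \tuple x (S_i^* \tuple x \rightarrow S_i \tuple x) \models \lnot \delta$, for $\delta$ a finite conjunction from the elementary diagram of $\M$, produces a positive-in-$S_i$ consequence of $\theta$ that is false in $\M$, closing the argument in a single compactness step.
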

\begin{proof}
	Recall (see e.g. Theorem 8.3.3 of \cite{hodges97b}) that if $\theta$ is not equivalent to any formula in which the $S_i$ occur only positively\footnote{But in which the identity symbol may occur positively or negatively - this is necessary to guarantee bijectivity.} then it is not preserved under bijective homomorphisms that fix all relations other than $S_1 \ldots S_n$: therefore, there exist two models $\mathfrak A$ and $\mathfrak B$ such that $\mathfrak A  \models \phi$, $\mathfrak B \not \models \phi$, and there is a bijective homomorphism $\mathfrak h: \mathfrak A \rightarrow \mathfrak B$ that fixes all relations other than $S_1 \ldots S_n$. For all $i = 1 \ldots n$, let $T_i = \mathfrak h[S_i^{\mathfrak A}]$ be the image of $S_i^{\mathfrak A}$ under $\mathfrak h$. Then since $\mathfrak h$ is a homomorphism, $T_i \subseteq S_i^{\mathfrak B}$; since $\mathfrak h$ is bijective and fixes all other relations, $\mathfrak B[T_1 \ldots T_n/S_1 \ldots S_n]$ is isomorphic to $\mathfrak A$, and hence $\mathfrak B \models \phi(T_1 \ldots T_n)$; and thus, since by hypothesis $\mathfrak B \not \models \phi(S_1 \ldots S_n)$, $\phi$ is not upwards closed in $S_1 \ldots S_n$. 
\end{proof}

The main idea of this section will be to ``extract'' the downwards closed dependency atoms $\D \tuple t$ ($\D \in \DD$) from a sentence $\phi \in \FO(\DD, \SSd)$, leaving only a sentence of $\FO(\SSd)$; then use the fact that $\SSd$ is strongly first order to find an equivalent first order sentence; and finally add back the dependencies previously removed. 

The following lemmas show how to do the first step of this procedure: 
\begin{Lemma}
	Let $\phi$ be a $\FO(\DD)$ formula, where $\DD$ is a set of dependencies $\{\D_0, \D_1, \ldots\}$ such that $\D_0$ is downwards closed. Suppose furthermore that $S$ is a new relation symbol with the same arity $k$ as $\D_0$, and let $\phi^*(S)$ be the formula obtained by replacing a \emph{single} instance $\D_0 \tuple t$ of $\D_0$ with $S \tuple t$. Then for all models $\M$ with signature containing the signature of $\phi$ and for all teams $X$ with domain containing the free variables of $\phi$, $\M \models_X \phi$ if and only if there exists a relation $S \subseteq M^{k}$ such that $\langle M, S\rangle \in \D_0$ and  $\M \models_X \phi^*(S)$.
\end{Lemma}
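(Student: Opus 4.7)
The plan is to prove both directions by structural induction on $\phi$, where at each step we keep track of the single marked occurrence of $\D_0\tuple t$ and the subteam at which it gets evaluated. The base case of the induction is when $\phi$ is the marked atom $\D_0 \tuple t$ itself; the remaining cases are the usual inductive clauses (disjunction, conjunction, existential and universal quantification), with the caveat that the marked occurrence lies in exactly one immediate subformula, and that other dependency atoms (including other, unmarked instances of $\D_0$) appearing in $\phi$ are inherited unchanged by $\phi^*(S)$.

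For the backward direction, assume $\langle M,S\rangle \in \D_0$ and $\M \models_X \phi^*(S)$; we descend through the team-semantic derivation of $\M \models_X \phi^*(S)$ and at each intermediate subformula containing the marked position, the subteam $Y$ at which we evaluate $S\tuple t$ must satisfy $Y(\tuple t)\subseteq S$ (since $S\tuple t$ is a first-order literal and \textbf{TS-lit} gives this by Tarskian satisfaction at each $s\in Y$). Because $\D_0$ is downwards closed and $\langle M,S\rangle \in \D_0$, we obtain $\langle M, Y(\tuple t)\rangle \in \D_0$, hence $\M \models_Y \D_0\tuple t$. Every other semantic rule invoked in the derivation is identical for $\phi$ and $\phi^*(S)$, so reassembling the derivation yields $\M \models_X \phi$.

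For the forward direction, assume $\M \models_X \phi$. A straightforward structural induction on $\phi$ (splitting on which immediate subformula contains the marked occurrence) locates the subteam $Y \subseteq X[\cdot]$ arising from the satisfaction tree at which the marked instance $\D_0 \tuple t$ is actually checked, and at that point $\langle M, Y(\tuple t)\rangle \in \D_0$ by \textbf{TS-$\D$}. Setting $S := Y(\tuple t)$, we have $\langle M,S\rangle \in \D_0$ and $\M \models_Y S\tuple t$ trivially, because $Y(\tuple t)\subseteq S$ by construction. Propagating the same teams used in the derivation of $\M \models_X \phi$ through the inductive clauses then witnesses $\M \models_X \phi^*(S)$.

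I do not expect a real obstacle: both directions reduce to elementary bookkeeping once one observes that the relation symbol $S$ is treated as a first-order literal (whose team-semantic satisfaction at $Y$ is precisely the condition $Y(\tuple t)\subseteq S$), and that the satisfaction conditions for $\D_0 \tuple t$ and for $S\tuple t$ are linked in exactly the two required directions by downward closure of $\D_0$ (for $\Leftarrow$) and by the choice $S = Y(\tuple t)$ at the specific subteam (for $\Rightarrow$). The only mild care required is keeping the ``marked position'' bookkeeping explicit so that unmarked occurrences of $\D_0$ elsewhere in $\phi$ are not inadvertently replaced.
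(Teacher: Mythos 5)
Your proposal is correct and follows essentially the same route as the paper: a structural induction on $\phi$ tracking the single marked occurrence, with the base case handled by taking $S = X(\tuple t)$ for the forward direction and invoking downwards closure of $\D_0$ together with the observation that $\M \models_Y S\tuple t$ iff $Y(\tuple t) \subseteq S$ for the backward direction. The inductive clauses are the same routine bookkeeping the paper carries out.
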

\begin{proof}
	By structural induction on $\phi$: 
	\begin{itemize}
		\item If $\phi$ is of the form $\D_0 \tuple t$ then $\phi^*(S)$ is simply $S \tuple t$. Suppose that $\M \models_X \D_0 \tuple t$; then for $S = X(\tuple t)$ we have that $\langle M, S\rangle \in \D_0$ and $\M \models_X S \tuple t$, as required. Conversely, suppose that such a $S$ exists. Then since $\M \models_X S \tuple t$ we have that $X(\tuple t) \subseteq S$; but then, since $\D_0$ is downwards closed and $\langle M, S\rangle \in \D_0$ we have that $\langle M, X(\tuple t)\rangle \in \D_0$ as well, and hence $\M \models_X \D_0 \tuple t$ as required. 

		\item If $\phi$ is of the form $\psi_1 \vee \psi_2$ then -- assuming without loss of generality that the instance of $\D_0$ which we are replacing is in $\psi_1$ -- $\phi^*(S)$ is $\psi_1^*(S) \vee \psi_2$. Then $\M \models_X \psi_1 \vee \psi_2$ iff $X = Y \cup Z$ for two $Y, Z$ such that $\M \models_Y \psi_1$ and $\M \models_Z \psi_2$ iff (by induction hypothesis) there  exist a relation $S$  and two $Y$, $Z$ such that $\langle M, S\rangle \in \D_0$, $X = Y \cup Z$, $\M \models_Y \psi_1^*(S)$ and $\M \models_Z \psi_2$ iff there exists a relation $S$ such that $\langle M, S\rangle \in \D_0$ and $\M \models_X \phi^*(S)$, as required. 
		\item If $\phi$ is of the form $\psi_1 \wedge \psi_2$ then -- assuming without loss of generality that the instance of $\D_0$ which we are replacing is in $\psi_1$ -- $\phi^*(S)$ is $\psi_1^*(S) \wedge \psi_2$. Then $\M \models_X \phi$ iff $\M \models_X \psi_1$ and $\M \models_X \psi_2$ iff (by induction hypothesis) there is some $S$ such that $\langle M,S\rangle \in \D_0$, $\M \models_X \psi_1^*(S)$ and $\M \models_X \psi_2$ iff there is some $S$ such that $\langle M, S \rangle \in \D_0$ and $\M \models_X \phi^*(S)$. 
		\item If $\phi$ is of the form $\exists v \psi$ then $\phi^*(S)$ is $\exists v \psi^*(S)$. Then $\M \models_X \phi$ iff there exists some $H$ such that $\M \models_{X[H/v]} \psi$ iff there exist some $S$ and $H$ such that $\langle M,  S\rangle \in \D_0$ and $\M \models_{X[H/v]} \psi^*(S)$ iff there exists some $S$ such that $\langle M, S\rangle \in \D_0$ and $\M \models \phi^*(S)$. 
		\item If $\phi$ is of the form $\forall v \psi$ then $\phi^*(S)$ is $\forall v \psi^*(S)$. Then $\M \models_X \phi$ iff $\M \models_{X[M/v]} \psi$ iff there exists some $S$ such that $\langle M, S\rangle \in \D_0$ and $\M \models_{X[M/v]} \psi^*(S)$ iff there exists some $S$ such that $\langle M,S\rangle \in \D_0$ and $\M \models \phi^*(S)$. 
	\end{itemize}
\end{proof}
\begin{Lemma}
	Let $\SSd$ be a strongly first order family of dependencies, let $\DD$ be a set of downwards closed dependencies, and let $\phi$ be a sentence of $\FO(\SSd, \DD)$. Then there exists a first order sentence $\chi(S_1 \ldots S_n)$, whose signature contains the signature of $\phi$ as well as new relation symbols $S_1 \ldots S_n$, and dependencies $\D_{j(1)} \ldots \D_{j(n)} \in \DD$ such that, for all models $\M$, $\M \models \phi$ if and only if there exist relations $S_1 \ldots S_n$ such that 
	\begin{itemize}
		\item For all $i = 1 \ldots n$, $\langle M, S_i\rangle \in \D_{j(i)}$; 
		\item $\M \models \chi(S_1 \ldots S_n)$. 
	\end{itemize}
	
	Moreover, all $S_i$ occur only positively in $\chi$.
\label{lemma:removeSFO}
\end{Lemma}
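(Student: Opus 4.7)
The plan is to eliminate the downwards-closed dependency atoms from $\phi$ one by one using the preceding lemma, invoke the strong first-orderness of $\SSd$ to reduce the resulting $\FO(\SSd)$-sentence to first order, and then extract a positive-in-$S_i$ version by means of the Lyndon-style Proposition \ref{propo:lyndon}.

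Concretely, I would proceed by induction on the number $n$ of occurrences of atoms $\D\tuple t$ with $\D\in\DD$ in $\phi$. At each step the preceding lemma lets me replace one such occurrence $\D_0\tuple t$ by $S_i\tuple t$ for a fresh $k$-ary relation symbol $S_i$ (where $k$ is the arity of $\D_0$), at the cost of an outer existential claim $\langle M,S_i\rangle\in\D_{j(i)}$ with $\D_{j(i)}:=\D_0$. Downwards closure of each $\D_{j(i)}$ is exactly what makes this replacement sound. After $n$ applications I obtain a sentence $\phi^*(S_1,\ldots,S_n)\in\FO(\SSd)$ over the signature of $\phi$ extended by the $S_i$, together with dependencies $\D_{j(1)},\ldots,\D_{j(n)}\in\DD$, such that $\M\models\phi$ iff there exist interpretations $S_1,\ldots,S_n$ with $\langle M,S_i\rangle\in\D_{j(i)}$ for each $i$ and $\M\models\phi^*(S_1,\ldots,S_n)$.

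Since $\SSd$ is strongly first order, the $\FO(\SSd)$-sentence $\phi^*$---now viewed over the enlarged signature that includes $S_1,\ldots,S_n$---is equivalent to some first order sentence $\chi'(S_1,\ldots,S_n)$. By construction, each $S_i$ occurs only positively in $\phi^*$, since we only ever substituted the positive atom $S_i\tuple t$ for the positive atom $\D_0\tuple t$. Hence, applying Proposition \ref{propo:posTS} to the empty team $\{\epsilon\}$, the sentence $\phi^*$---and therefore also its first order equivalent $\chi'$---is upwards closed in each $S_i$. Proposition \ref{propo:lyndon} then yields a logically equivalent first order $\chi(S_1,\ldots,S_n)$ in which all the $S_i$ occur only positively, which is the desired sentence.

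The main obstacle is a conceptual rather than a technical one: verifying that strong first-orderness, as per Definition \ref{def:stronglyFO}, genuinely applies to $\phi^*$ even though its signature has been enlarged with auxiliary symbols $S_i$ not appearing in $\phi$. This is immediate from the definition, which quantifies over arbitrary sentences of $\FO(\SSd)$ and is insensitive to the underlying signature. Aside from this, the induction in the first step is a routine bookkeeping exercise once the preceding lemma is in hand, and the positivity step is a direct application of Proposition \ref{propo:lyndon}.
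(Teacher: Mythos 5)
Your proposal is correct and follows essentially the same route as the paper's own proof: iterate the preceding extraction lemma to obtain a $\FO(\SSd)$ sentence with positively occurring $S_i$, invoke strong first orderness to pass to a first order equivalent, and then use Proposition \ref{propo:posTS} together with Proposition \ref{propo:lyndon} to restore positivity of the $S_i$. The signature-enlargement point you flag is indeed unproblematic for exactly the reason you give.
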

\begin{proof}
	Starting from $\phi$, we can apply iteratively the previous lemma to remove all instances of dependencies $\D \in \DD$ from $\phi$. In this way, we obtain a sentence $\theta(S_1 \ldots S_n)$ in $\FO(\SSd)$ which -- aside from being in $\FO(\SSd)$ rather than in $\FO$ -- would satisfy our requirements.\footnote{As per Definition \ref{def:generaldeps}, in $\FO(\DD)$, the atoms $\D \tuple t$ for $\D \in \DD$ cannot occur negated. Indeed, $\FO(\DD)$ contains only expressions in Negation Normal Form with first order literals, first order connectives and atoms $\D \tuple t$.} But since $\SSd$ is a strongly first order family of dependencies, $\theta(S_1 \ldots S_n)$ is logically equivalent to some first order $\chi(S_1 \ldots S_n)$. Furthermore, since the $S_i$ occur only positively in $\theta$ by Proposition \ref{propo:posTS} it is the case that $\theta$ is upwards closed in all $S_i$, and hence so is $\chi$, and hence -- by Proposition \ref{propo:lyndon} -- we can also require that all $S_i$ occur only positively in $\chi$. 
\end{proof}
Note that there is no guarantee that the first order sentence $\chi(S_1 \ldots S_n)$ obtained via Lemma \ref{lemma:removeSFO} would contain only one occurrence of each symbol $S_i$, as the translation from $\FO(\SSd)$ to $\FO$ may introduce additional occurrences. However, as we will now see, as long as the $S_i$ occur only positively this can be dealt with: 
\begin{Lemma}
Let $\D$ be a downwards closed dependency, let $\chi(S)$ be a first order formula where $S$ is a relational symbol that occurs only positively in $\chi$, and let $\chi'(W_1 \ldots W_n)$ be the formula obtained by replacing each occurrence of $S$ in $\chi$ with a different new symbol $W_i$. Then the following are equivalent for all suitable models $\M$ and assignments $s$: 
\begin{enumerate}
\item There exists some $S$ such that $\langle M, S\rangle \in \D$ and $\M \models \chi(S)$; 
\item There exist $W_1 \ldots W_n$ such that $\left\langle M,  \left(\bigcup_{i=1}^n W_i\right)\right\rangle \in \D$ and \\$\M \models_s \chi'(W_1 \ldots W_n)$.
\end{enumerate}
	\label{lemma:singleocc}
\end{Lemma}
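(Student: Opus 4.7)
The plan is to prove both implications directly, with the key observation being that standard monotonicity of first order formulas (with respect to relation symbols that occur only positively) does all the real work.

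For (1) $\Rightarrow$ (2), I would simply take the witnesses $W_1 = W_2 = \cdots = W_n = S$. Then $\bigcup_{i=1}^n W_i = S$, so $\langle M, \bigcup_i W_i\rangle \in \D$ by hypothesis, and $\chi'(W_1, \ldots, W_n)$ is literally $\chi(S)$ since we have reinstated the same relation at every position previously occupied by $S$. This direction is essentially bookkeeping and does not use downwards closure of $\D$.

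For (2) $\Rightarrow$ (1), I would set $S := \bigcup_{i=1}^n W_i$. Then $\langle M, S\rangle \in \D$ is immediate from the hypothesis. It remains to deduce $\M \models \chi(S)$ from $\M \models_s \chi'(W_1, \ldots, W_n)$. Here I would invoke the classical fact that, in Tarskian semantics, a first order formula in which a relation symbol occurs only positively is preserved under superset-enlargements of the interpretation of that symbol (proved by a straightforward induction on the formula, analogous to Proposition \ref{propo:posTS} but for single assignments). Applying this to each $W_i$ in turn (replacing $W_i$ by the larger set $S$, one position at a time) transforms $\chi'(W_1, \ldots, W_n)$ into $\chi'(S, \ldots, S) = \chi(S)$ while preserving truth under the assignment $s$.

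There is no real obstacle: the lemma is essentially a syntactic repackaging of the fact that positive occurrences of $S$ can always be ``merged'' by taking a union, and it is monotonicity rather than the downwards closure of $\D$ that carries the proof. The hypothesis that $S$ occurs only positively in $\chi$ is exactly what is needed so that the enlargement $W_i \mapsto \bigcup_j W_j$ cannot invalidate the formula; downwards closure of $\D$ plays no role in this particular step (it was consumed in the earlier extraction lemma and will matter again only when the lemma is combined with those preceding it). The only thing to be a little careful about is that the distinct symbols $W_1, \ldots, W_n$ are genuinely independent in $\chi'$, so that the positional replacement is well-defined; this is guaranteed by the construction of $\chi'$ from $\chi$.
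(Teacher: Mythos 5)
Your proof is correct and follows essentially the same route as the paper's: for (1) $\Rightarrow$ (2) take all $W_i = S$, and for (2) $\Rightarrow$ (1) take $S = \bigcup_i W_i$ and use monotonicity in the positively occurring symbols. Your side remark that downwards closure of $\D$ is not actually used in this lemma is also accurate.
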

\begin{proof}$\\$
\begin{description}
\item[1 $\Rightarrow$ 2] If such a $S$ exists, let $W_1 = W_2 = \ldots W_n = S$. Then since $\bigcup_i W_i = S$ we have that $\left \langle M, \left(\bigcup_i W_i\right) \right \rangle \in \D$, and since $\chi'(S \ldots S)$ is the same as $\chi(S)$ we have that $\M \models \chi'(W_1 \ldots W_n)$.
\item[2 $\Rightarrow$ 1] Suppose that such $W_1 \ldots W_n$ exist, and let $S = \bigcup_i W_i$. Then by assumption $\langle M, S\rangle \in \D$; and furthermore, since the $W_i$ occur only positively in $\chi'$ and $W_i \subseteq S$ for all $i$ we have that $\M \models \chi'(S \ldots S)$, that is, $\M \models \chi(S)$. 
\end{description}
\end{proof}

In this way, we managed to ``bring out'' all the dependencies of $\DD$ and convert the remaining expression to First Order Logic. Now we need to show that it is possible to ``put them back in''.\\

To do so, we will have to encode multiple relations into a single team. The obvious way to do so would be to fix tuples of variables $\tuple v_1 \ldots \tuple v_n$ and let a team $X$ correspond to the sets of tuples $X(\tuple v_i) = \{s(\tuple v_i) : s \in X\}$ for $i = 1 \ldots n$; but a problem with this is that, as long as the team $X$ is nonempty, it would not be possible to encode empty sets of tuples. So we will instead use \emph{two} tuples of variables\footnote{This is more than what is strictly necessary -- a tuple of variables and one extra variable to act as a flag would be enough -- but it makes the formulas somewhat simpler, and in this work the number of existential quantifiers is not a concern.} $\tuple v_i$ and $\tuple w_i$ to encode the relation $R_i = X\upharpoonright(\tuple v_i = \tuple w_i)(\tuple v_i) = \{s(\tuple v_i) : s \in X, s(\tuple v_i) = s(\tuple w_i)\}$: see Figure \ref{fig:reptuples} for an illustration. \\

\begin{figure}
\begin{align*}
& X = \begin{array}{| c | c c | c c c c | c c |}
\hline
& v_1 & w_1 & v_2 & v_3 & w_2 & w_3 & v_4 & w_4\\
\hline
s_0 & 0 & 0 & 0 & 1 & 0 & 1 & 0 & 1\\
s_1 & 1 & 1 & 1 & 2 & 1 & 2 & 0 & 1\\
s_2 & 2 & 2 & 0 & 0 & 1 & 1 & 0 & 1\\
\hline
\end{array}\\
& \\
& X\upharpoonright (v_1 = w_1)(v_1) = \{s_0,s_1, s_2\}(v_1) = \{0,1,2\}.\\
& X\upharpoonright (v_2 v_3 = w_2 w_3)(v_2 v_3) = \{s_0, s_1\}(v_2 v_3) = \{\langle 0, 1\rangle, \langle 1, 2\rangle\}.\\
& X\upharpoonright (v_4 = w_4)(v_4) = \emptyset(v_4) = \emptyset.
\end{align*}
\caption{A possible representation of the set of elements $A = \{0,1,2\}$, of the set of tuples $B = \{\langle 0, 1\rangle, \langle 1, 2\rangle\}$, and of the empty set $C = \emptyset$ in a single team. The variables $v_1$ and $w_1$ encode $A$, the variables $v_2 v_3$ and $w_2 w_3$ encode $B$, and the variables $v_4$ and $w_4$ encode $C$. In this way, it is possible to represent empty and nonempty relations within a single team.}
\label{fig:reptuples}
\end{figure}

Using this representation, it is first of all possible to state that the union of the sets of tuples of elements encoded by certain variables satisfy a certain dependency:
\begin{Lemma}
Let $\D$ be any dependency, let $k$ be its arity, let $\tuple v_1 \ldots \tuple v_n$, $\tuple w_1 \ldots \tuple w_n$ be tuples of variables such that all $\tuple v_i$ and $\tuple w_i$ have length equal to the arity of $\D$, and let $\D_\cup(\tuple v_1 \ldots \tuple v_n; \tuple w_1 \ldots \tuple w_n)$ be the $\FO(\D)$ formula 
\begin{align*}
	& \forall p_1 \ldots p_{n} \forall q \exists \tuple z_0 \tuple z_1 \left( \left(\bigvee_i q = p_i\right) \hookrightarrow \right. \\
& ~ ~ ~ ~ ~ ~ \left. \left( \bigwedge_{i \leq n} \left( \left (\bigwedge_{j < i} q \not = p_j \wedge q = p_i\right) \hookrightarrow \tuple z_0\tuple z_1 = \tuple v_i \tuple w_i\right)  \wedge (\tuple z_0 = \tuple z_1 \hookrightarrow \D \tuple z_0)\right)\right)
\end{align*}
	where $p_1 \ldots p_n$ and $q$ are new, distinct variables and $\tuple z_0$ and $\tuple z_1$ are tuples of new variables of the same length as the arity of $\D$. 

	Then for all suitable models $\M$ (with at least two elements) and teams $X$, $\M \models_X \D_\cup(\tuple v_1 \ldots \tuple v_n; \tuple w_1 \ldots \tuple w_n)$ if and only if $\left \langle M, \left(\bigcup_{i=1}^n W_i\right)\right\rangle \in \D$, where\\$W_i = X\upharpoonright (\tuple v_i = \tuple w_i) (\tuple v_i) = \{s(\tuple v_i) : s \in X, s(\tuple v_i) = s(\tuple w_i)\}$. 
	\label{lemma:union}
\end{Lemma}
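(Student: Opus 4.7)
My plan is to unwind the formula according to the Team Semantics rules, reading $p_1, \ldots, p_n, q$ as auxiliary variables that implement a \emph{first-match} case selection. After the universal quantifiers we obtain $Y := X[M/p_1 \ldots p_n q]$; for each $s \in Y$, if $s(q) = s(p_i)$ holds for some $i$, write $i^*(s)$ for the smallest such index, and call such an $s$ \emph{matched}. By Proposition \ref{propo:selimp}, the outer selective implication $\bigl(\bigvee_i q = p_i\bigr) \hookrightarrow \cdots$ reduces the inner analysis to the matched subteam $Y^m := Y \upharpoonright \bigvee_i q = p_i$.

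Next I would argue that on $Y^m$ the conjunction $\bigwedge_i \bigl(\bigwedge_{j<i} q \neq p_j \wedge q = p_i \hookrightarrow \tuple z_0 \tuple z_1 = \tuple v_i \tuple w_i\bigr)$, together with the nonemptiness of the supplementation function $H$ supplied by the existential quantifier, forces $H(s) = \{(s(\tuple v_{i^*(s)}), s(\tuple w_{i^*(s)}))\}$ for every matched $s$. On the unmatched assignments $H$ can be chosen arbitrarily, and this freedom is exactly what provides the $\Leftarrow$ direction. Under the forced choice, the remaining conjunct $\tuple z_0 = \tuple z_1 \hookrightarrow \D \tuple z_0$ asserts that $\langle M, R\rangle \in \D$, where $R$ is the relation taken by $\tuple z_0$ in the further subteam of $Y^m[H/\tuple z_0 \tuple z_1]$ cut by $\tuple z_0 = \tuple z_1$.

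It therefore suffices to check that $R = \bigcup_{i=1}^n W_i$. The inclusion $R \subseteq \bigcup_i W_i$ is immediate, since any matched assignment $s$ with $s(\tuple z_0) = s(\tuple z_1)$ satisfies $s(\tuple z_0) = s(\tuple v_{i^*(s)}) = s(\tuple w_{i^*(s)}) \in W_{i^*(s)}$. For the converse, given $w \in W_{i_0}$ witnessed by $s' \in X$ with $s'(\tuple v_{i_0}) = s'(\tuple w_{i_0}) = w$, I would use the hypothesis $|M| \geq 2$ to pick $a \neq b$ in $M$ and consider the extension $s \in Y$ of $s'$ setting $p_1 = \cdots = p_{i_0 - 1} = a$, $p_{i_0} = q = b$, and the remaining $p_j$ arbitrarily; then $i^*(s) = i_0$ and the associated $\tuple z_0$-value is $w$, placing $w$ in $R$.

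The main obstacle is really only the careful bookkeeping of nested selective implications and of which team each subformula is being evaluated in; once that is done, both directions fall out simultaneously from the same chain of equivalences. The two-element assumption enters precisely in the last step, to guarantee that a ``first match at index $i_0$'' configuration is realizable for every $i_0$ and hence that $R$ covers all of $\bigcup_i W_i$.
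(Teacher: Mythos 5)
Your proof is correct and follows essentially the same route as the paper's: the selective implications force the supplementation function to pick out $s(\tuple v_{i^*(s)}\tuple w_{i^*(s)})$ via the first-match index, the diagonal cut $\tuple z_0 = \tuple z_1$ then recovers exactly $\bigcup_i W_i$, and the two-element hypothesis is used in precisely the same place (realizing a first match at an arbitrary index $i_0$ by giving $p_j$ a value different from $q$ for $j < i_0$). The only cosmetic difference is that you present the argument as a single chain of equivalences while the paper spells out the two directions separately with an explicit witness $H$ for the converse; both are sound.
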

\begin{proof}
	Suppose that $\M \models_X \D_\cup(\tuple v_1 \ldots \tuple v_n; \tuple w_1 \ldots \tuple w_n)$. Then there exist some $H$ and some
	\[
		Y = X[M/p_1 \ldots p_n q][H/\tuple z_0 \tuple z_1]\upharpoonright \left(\bigvee_i q = p_i\right)	\] 
such that
\begin{enumerate}
	\item For all $i = 1 \ldots n$ and for all $s \in X[M/p_1 \ldots p_n q]$, if $i$ is the smallest index such that $s(q) = s(p_i)$ then $H(s) = \{s(\tuple v_i \tuple w_i)\}$; 
	\item For $Z = Y \upharpoonright (\tuple z_0 = \tuple z_1)$, $\langle M,  Z(\tuple z_0)\rangle \in \D$. 
\end{enumerate}
	Now, $Y$ is precisely the set of all tuples in $X[M/p_1 \ldots p_n q][H/\tuple z_0 \tuple z_1]$ such that the value of $q$ is equal to the value of \emph{some} $p_i$. Therefore, the first condition implies that $Y\upharpoonright (\tuple z_0 = \tuple z_1)(\tuple z_0) = \bigcup_i (Y\upharpoonright (\tuple v_i = \tuple w_i)(\tuple v_i)) = \bigcup_i (X\upharpoonright (\tuple v_i = \tuple w_i)(\tuple v_i))$; and then the second condition implies that $\left \langle M, \left(\bigcup_i X\upharpoonright (\tuple v_i = \tuple w_i)(\tuple v_i)\right)\right\rangle \in \D$, as required.

	Conversely, suppose that $\left \langle M, \left(\bigcup_i X\upharpoonright (\tuple v_i = \tuple w_i)(\tuple v_i)\right)\right \rangle \in \D$. Then let $0$ and $1$ be two distinct elements in $M$, let $\tuple 0$ and $\tuple 1$ represent tuples of $k$ zeroes or ones, and let $H: X[M/p_1 \ldots p_n q]\rightarrow \parts(M^{2k}) \backslash \{\emptyset\}$ be defined as 
\[
	H(s) = \left\{\begin{array}{l l}
		\{s(\tuple v_i \tuple w_i)\} & \text{ if } i \text{ is the smallest index such that } s(q) = s(p_i);\\
		\{\tuple 0\tuple 1\} & \text{ if } s(q) \not = s(p_i) ~\forall i \in \{1 \ldots n\}.
	\end{array}
\right.
\]
	Then let $Y = X[M/p_1 \ldots p_n q][H/\tuple z_0 \tuple z_1]\upharpoonright \left(\bigvee_i q = p_i\right)$. By construction, it is clear that 
	\[
		\M \models_Y \left (\bigwedge_{j < i} q \not = p_j \wedge q = p_i \right) \hookrightarrow \tuple z_0 \tuple z_1 = \tuple v_i \tuple w_i
	\]
	for all $i = 1 \ldots n$. 
	Moreover, $Y(\tuple z_0 \tuple z_1) = \bigcup_i X(\tuple v_i \tuple w_i)$, because for each $s \in X$ and for each $i = 1 \ldots n$ there exists an assignment $s^i = s[0 \ldots 1 \ldots 1/p_1 \ldots p_n][1/q] \in X[M/p_1 \ldots p_n q]$ such that $s^i(p_j)$ is $0$ if $j < i$ and $1$ otherwise and such that $s^i(q) = 1$, and for such an assignment we have that $H(s^i) = \{s^i(\tuple v_i \tuple w_i)\} = \{s(\tuple v_i\tuple w_i)\}$, as required. Thus we also have that $Y\upharpoonright (\tuple z_0 = \tuple z_1)(\tuple z_0) = \bigcup_i X\upharpoonright (\tuple v_i = \tuple w_i)(\tuple v_i)$. But by assumption we know that\\ $\left \langle M, \bigcup_i X\upharpoonright (\tuple v_i = \tuple w_i)(\tuple v_i)\right \rangle \in \D $; therefore $\left \langle M, Y\upharpoonright(\tuple z_0 = \tuple z_1)(\tuple z_0)\right \rangle \in \D$, and thus $\M \models_Y \tuple z_0 = \tuple z_1 \hookrightarrow \D \tuple z_0$, and finally $\M \models_X \D_\cup(\tuple v_1 \ldots \tuple v_n; \tuple w_1 \ldots \tuple w_n)$ as required.
\end{proof}

Finally, we need a way to ``put back'' the dependencies of $\DD$ into the formula. The next two lemmas take care of that: 
\begin{Lemma}
	Let $\psi(W)$ be a \textbf{quantifier-free} $\FO(\DD)$ formula in which the $k$-ary relation symbol $W$ occurs at most once and only positively and let $\psi'(\tuple v, \tuple w)$ be the formula obtained by replacing the (unique) occurrence $W \tuple t$ of $W$ in $\psi$ with $\tuple v = \tuple w \wedge \tuple t = \tuple w$, where $\tuple v$ and $\tuple w$ are tuples of new variables of length equal to the arity of $W$. Then, for all suitable models $\M$, relations $W$ and teams $X$,  $\M \models_X \psi(W)$ if and only if there exists some $H$ such that 
	\begin{enumerate}
		\item $\M \models_{X[H/\tuple v \tuple w]} \psi'(\tuple v, \tuple w)$; 
		\item $X[H/\tuple v \tuple w]\upharpoonright (\tuple v = \tuple w)(\tuple v) \subseteq W$. 
	\end{enumerate}
	\label{lemma:qfree}
\end{Lemma}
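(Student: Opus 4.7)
The plan is to induct on the structure of the quantifier-free formula $\psi$. The motivating idea is that the auxiliary variables $\tuple v, \tuple w$ give us a flexible way to ``locally witness'' the tuples that must lie inside $W$: wherever $\tuple v = \tuple w$ holds, the projected value of $\tuple v$ is constrained to lie in $W$, while letting $\tuple v \neq \tuple w$ effectively declares ``no witness at this row''. This dovetails with the double-tuple encoding discussed before Lemma \ref{lemma:union} and makes empty contributions representable.

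The first base case is $\psi = W\tuple t$, in which $\psi'$ becomes $\tuple v = \tuple w \wedge \tuple t = \tuple w$. For the forward direction, assuming $X(\tuple t) \subseteq W$, the choice $H(s) = \{\langle s(\tuple t), s(\tuple t)\rangle\}$ satisfies both conjuncts of $\psi'$ and yields projection $X(\tuple t) \subseteq W$. Conversely, the conjuncts of $\psi'$ force every $\tuple m \in H(s)$ to be $\langle s(\tuple t), s(\tuple t)\rangle$, so the projection equals $X(\tuple t)$ exactly, and condition (2) gives back $X(\tuple t) \subseteq W$. The second base case covers $\psi$ not mentioning $W$ (so $\psi' = \psi$): by Proposition \ref{propo:local}, satisfaction of $\psi$ is unaffected by adding the fresh variables $\tuple v, \tuple w$, so it suffices to exhibit some $H$ with projection $\subseteq W$. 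If $W \neq \emptyset$, pick any $\tuple m_0 \in W$ and set $H(s) = \{\langle \tuple m_0, \tuple m_0\rangle\}$; if $W = \emptyset$ (with at least two elements in $M$), pick distinct $0, 1 \in M$ and set $H(s) = \{\langle \tuple 0, \tuple 1\rangle\}$, making the projection empty.

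The conjunction case $\psi = \psi_1 \wedge \psi_2$ is immediate: since $W$ occurs in at most one conjunct, applying the inductive hypothesis to that conjunct and invoking locality on the other transfers satisfaction to a common $H$. The delicate case is disjunction $\psi = \psi_1 \vee \psi_2$, where I assume WLOG that $W$ occurs in $\psi_1$ (so $\psi' = \psi_1' \vee \psi_2$). For the forward direction, a witnessing split $X = Y \cup Z$ yields an $H_Y$ on $Y$ from the inductive hypothesis applied to $\psi_1$, and an $H_Z$ on $Z$ from the second base case; setting $H(s) = H_Y(s) \cup H_Z(s)$ on $Y \cap Z$ and $H_Y(s)$ or $H_Z(s)$ elsewhere gives the decomposition $X[H/\tuple v\tuple w] = Y[H_Y/\tuple v\tuple w] \cup Z[H_Z/\tuple v\tuple w]$, which witnesses $\psi_1' \vee \psi_2$, and whose projection is the union of two subsets of $W$. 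For the reverse direction, given $H$ and a decomposition $X[H/\tuple v\tuple w] = Y' \cup Z'$, I set $H_Y(s) = \{\tuple m \in H(s) : s[\tuple m/\tuple v\tuple w] \in Y'\}$ with $Y = \{s \in X : H_Y(s) \neq \emptyset\}$, and symmetrically for $Z$; then $X = Y \cup Z$, $Y' = Y[H_Y/\tuple v\tuple w]$, and the inductive hypothesis on $\psi_1$ together with locality on $\psi_2$ delivers $\M \models_Y \psi_1$ and $\M \models_Z \psi_2$.

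The main technical obstacle is precisely the bookkeeping in this disjunction case: verifying that the two witnessing functions can be stitched together (and unstitched) without altering the extended team as a whole, and confirming that the projection condition is preserved in both directions. Once one notes that the relevant set-theoretic operations (taking unions of teams and restricting to subteams) can only weaken a ``$\subseteq W$'' inclusion, the projection condition is automatic; the rest is careful case analysis depending on whether an assignment $s$ lies in $Y$, $Z$, or both.
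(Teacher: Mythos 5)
Your proof is correct and follows essentially the same route as the paper's: structural induction with the diagonal witness $H(s)=\{s(\tuple t)s(\tuple t)\}$ at the atom $W\tuple t$, locality for the $W$-free conjunct/disjunct, and the same stitching and unstitching of supplementation functions in the disjunction case (using a pair of distinct tuples so that the $W$-free part contributes nothing to the $\tuple v=\tuple w$ projection). Your explicit base case for formulas not mentioning $W$ is a small completeness point the paper leaves implicit, but it changes nothing of substance.
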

\begin{proof}
	The proof is by structural induction on $\psi$. 
	\begin{itemize}
		\item If $\psi$ is of the form $W \tuple t$, $\psi'$ is simply $\tuple v = \tuple w \wedge \tuple t = \tuple w$. Now if $\M \models_X W \tuple t$ it must be that $s(\tuple t) \in W$ for all $s \in X$. Thus, if we define $H$ so that $H(s) = \{s(\tuple t) s(\tuple t)\}$ for all $s \in X$ we have at once that $X[H/\tuple v \tuple w]\upharpoonright (\tuple v = \tuple w) (\tuple v) = \{s(\tuple t): s \in X\} \subseteq W$, and furthermore $\M \models_{X[H/\tuple v \tuple w]} \tuple v = \tuple w \wedge \tuple t = \tuple w$ as required.

			Conversely, suppose that there exists some $H$ such that $X[H/\tuple v \tuple w]\upharpoonright (\tuple v = \tuple w)(\tuple v) \subseteq W$ and $\M \models_{X[H/\tuple v \tuple w]} \tuple v = \tuple w \wedge \tuple t = \tuple w$. Now, for any $s \in X$, let $\tuple m = s(\tuple t)$: then $s[\tuple m \tuple m/\tuple v \tuple w] \in X[H/\tuple v \tuple w]\upharpoonright (\tuple v = \tuple w)$, and therefore $\tuple m \in W$.  
Thus $\M \models_X W \tuple t$ as required. 
		\item If $\psi(W)$ is of the form $\psi_1 \vee \psi_2$, let us assume -- without loss of generality -- that $W$ occurs only in $\psi_1$. Then $\psi'(\tuple v, \tuple w)$ is $\psi_1'(\tuple v, \tuple w) \vee \psi_2$. Now suppose that $\M \models_X \psi_1(W) \vee \psi_2$. Then $X = Y \cup Z$ for two $Y$, $Z$ such that $\M \models_Y \psi_1(W)$ and $\M \models_Z \psi_2$. So by induction hypothesis we know that there exists some $H: Y \rightarrow \parts(M^{2k}) \backslash \{\emptyset\}$ such that $\M \models_{Y[H/\tuple v \tuple w]} \psi_1'(\tuple v, \tuple w)$ and $Y[H/\tuple v \tuple w]\upharpoonright (\tuple v = \tuple w)(\tuple v) \subseteq W$. Now let $\tuple m_1$ and $\tuple m_2$ be two arbitrary, distinct tuples of elements, and let $H': X \rightarrow \parts(M^{2k})\backslash \{\emptyset\}$ be defined as 
			\[
				H'(s) = \left\{\begin{array}{l l}
					H(s) \cup \{\tuple m_1 \tuple m_2\} & \text{ if } s \in Y \cap Z;\\
					H(s) & \text{ if } s \in Y \backslash Z;\\
					\{\tuple m_1 \tuple m_2\} & \text{ if } s \in Z \backslash Y.
				\end{array}
					\right.
			\]
			Then $X[H'/\tuple v \tuple w] = Y[H/\tuple v \tuple w] \cup Z[\tuple m_1 \tuple m_2 / \tuple v \tuple w]$, $\M \models_{Z[\tuple m_1 \tuple m_2/\tuple v \tuple w]} \psi_2$ by locality, and $X[H'/\tuple v \tuple w]\upharpoonright (\tuple v = \tuple w)(\tuple v)  = Y[H/\tuple v \tuple w]\upharpoonright (\tuple v = \tuple w)(\tuple v) \subseteq W$. Moreover $\M \models_{X[H'/\tuple v \tuple w]} \psi'_1(\tuple v, \tuple w) \vee \psi_2$, as required. 

			Conversely, suppose that there is some $H$ such that \\$\M \models_{X[H/\tuple v \tuple w]} \psi'_1(\tuple v, \tuple w) \vee \psi_2$ and such that $X[H/\tuple v \tuple w]\upharpoonright (\tuple v = \tuple w)(\tuple v) \subseteq W$. Then $X[H/\tuple v\tuple w] = Y' \cup Z'$ for two $Y'$, $Z'$ such that $\M \models_{Y'} \psi'_1(\tuple v, \tuple w)$ and $\M \models_{Z'} \psi_2$. Now let $Y = \{s \in X: \exists \tuple m_1 \tuple m_2 \text{ s.t. } s[\tuple m_1 \tuple m_2/ \tuple v \tuple w] \in Y'\}$ and let $H': Y \rightarrow \parts(M^{2k}) \backslash \{\emptyset\}$ be defined as 
			\[
				H'(s) = \{\tuple m_1 \tuple m_2 \in M^{2 k} : s[\tuple m_1 \tuple m_2/\tuple v \tuple w] \in Y'\}.
			\]
			Then by construction $H'(s) \not = \emptyset$ for all $s \in Y$, and furthermore $Y[H'/\tuple v \tuple w] = Y'$; therefore, $\M \models_{Y[H'/\tuple v \tuple w]} \psi'_1(\tuple v, \tuple w)$, and since $Y' \subseteq X[H/\tuple v \tuple w]$ we have that  
\begin{align*}
Y[H'/\tuple v\tuple w] \upharpoonright (\tuple v = \tuple w)(\tuple v) &= 
Y'\upharpoonright(\tuple v = \tuple w)(\tuple v)\\
& \subseteq 
X[H/\tuple v\tuple w]\upharpoonright (\tuple v = \tuple w)(\tuple v) \subseteq W.
\end{align*}

 Then by induction hypothesis we have that $\M \models_Y \psi_1(W)$. Now let $Z = \{s \in X : \exists \tuple m_1 \tuple m_2 \text{ s.t. } s[\tuple m_1 \tuple m_2/\tuple v \tuple w] \in Z'\}$. By locality, we have that $\M \models_Z \psi_2$, and moreover $X = Y \cup Z$ (since if $s \in X$ then there are some $\tuple m_1 \tuple m_2$ such that $s[\tuple m_1 \tuple m_2/\tuple v \tuple w] \in X[H/\tuple v \tuple w] = Y' \cup Z'$) and hence $\M \models_X \psi_1(W) \vee \psi_2$ as required. 

		\item If $\psi(W)$ is of the form $\psi_1 \wedge \psi_2$, let us again assume that $W$ occurs only in $\psi_1$. Then $\psi'(\tuple v, \tuple w) = \psi_1'(\tuple v, \tuple w) \wedge \psi_2$. Suppose that $\M \models_X \psi_1(W) \wedge \psi_2$: then $\M \models_X \psi_1(W)$ and $\M \models_X \psi_2$, and by induction hypothesis there exists some $H$ such that $X[H/\tuple v \tuple w]\upharpoonright (\tuple v = \tuple w)(\tuple v) \subseteq W$ and $\M \models_{X[H/\tuple v \tuple w]} \psi'_1(\tuple v, \tuple w)$. Furthermore, by locality, we have that $\M \models_{X[H / \tuple v \tuple w]} \psi_2$, and therefore $\M \models_{X[H/\tuple v \tuple w]} \psi'_1 (\tuple v,\tuple w)\wedge \psi_2$ as required. 

			Conversely, suppose that there is some $H$ such that $\M \models_{X[H/\tuple v \tuple w]} \psi'_1(\tuple v, \tuple w) $\\$\wedge \psi_2$ and $X[H/\tuple v \tuple w]\upharpoonright (\tuple v = \tuple w)(\tuple v) \subseteq W$. Then by locality $\M \models_X \psi_2$ and by induction hypothesis $\M \models_{X} \psi_1(W)$, and hence $\M \models_X \psi_1(W) \wedge \psi_2$ as required. 
	\end{itemize}
\end{proof}
\begin{Lemma}
	Let $\chi(W_1 \ldots W_q) = \forall \tuple x_1 \exists \tuple y_1 \ldots \forall \tuple x_n \exists \tuple y_n \psi(W_1 \ldots W_q)$ be a $\FO(\DD)$ sentence in Quantifier Normal Form in which the $W_i$ -- which all have the same arity $k$ -- occur only positively and at most once each, and let $\D$ be a downwards closed dependency of the same arity of the $W_i$. Then the following are equivalent: 
	\begin{enumerate}
		\item There exist $W_1 \ldots W_q$ such that $\left \langle M, \bigcup_i W_i\right \rangle \in \D$ and $\M \models \chi(W_1 \ldots W_q)$; 
		\item $\M \models \forall \tuple x_1 \ldots \exists \tuple y_n \exists \tuple v_1 \tuple w_1 \ldots \tuple v_q \tuple w_q (\D_\cup(\tuple v_1 \ldots \tuple v_q; \tuple w_1 \ldots \tuple w_q) \wedge $\\$\psi'(\tuple v_1 \tuple w_1 \ldots \tuple v_q \tuple w_q))$, where $\psi'(\tuple v_1 \tuple w_1 \ldots \tuple v_q \tuple w_q)$ is obtained -- as in the previous lemma -- by replacing each $W_i \tuple t_i$ with $\tuple v_i = \tuple w_i \wedge \tuple t_i = \tuple w_i$ for tuples of new variables $\tuple v_i$ and $\tuple w_i$. 
	\end{enumerate}
	\label{lemma:putback}
\end{Lemma}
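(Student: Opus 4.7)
The plan is to reduce the lemma to a quantifier-free kernel and then combine Lemmas \ref{lemma:union} and \ref{lemma:qfree}. By repeated application of the semantic rules for $\forall$ and $\exists$ together with Proposition \ref{propo:local}, both sides of the equivalence reduce to a statement about a common team $Y$ obtained from $\{\epsilon\}$ by the prefix $\forall \tuple x_1 \exists \tuple y_1 \ldots \forall \tuple x_n \exists \tuple y_n$: the same supplementation functions witnessing the existentials can be used on both sides, since the extra quantifier block $\exists \tuple v_1 \tuple w_1 \ldots \tuple v_q \tuple w_q$ of the second condition corresponds exactly to the existence of a single function $H$ applied after the prefix. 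Hence it suffices to prove, for every such $Y$, that $\M \models_Y \psi(W_1, \ldots, W_q)$ for some $W_i$ with $\langle M, \bigcup_i W_i\rangle \in \D$ if and only if there exists some $H$ with $\M \models_{Y[H/\tuple v_1 \tuple w_1 \cdots \tuple v_q \tuple w_q]} \D_\cup(\tuple v_1 \ldots \tuple v_q; \tuple w_1 \ldots \tuple w_q) \wedge \psi'(\tuple v_1 \tuple w_1, \ldots, \tuple v_q \tuple w_q)$.

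Next, I would promote Lemma \ref{lemma:qfree} to a multi-variable version: since each $W_i$ occurs at most once and only positively in the quantifier-free $\psi$, the same structural induction applies simultaneously to all of $W_1, \ldots, W_q$, yielding that $\M \models_Y \psi(W_1, \ldots, W_q)$ if and only if there is some $H$ such that $\M \models_{Y[H/\tuple v_1 \tuple w_1 \cdots \tuple v_q \tuple w_q]} \psi'(\tuple v_1 \tuple w_1, \ldots, \tuple v_q \tuple w_q)$ and, writing $R_i := Y[H/\tuple v_1 \tuple w_1 \cdots \tuple v_q \tuple w_q]\upharpoonright(\tuple v_i = \tuple w_i)(\tuple v_i)$, each $R_i \subseteq W_i$.

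For the direction $1 \Rightarrow 2$, given a witness $W_1, \ldots, W_q$, use the multi-variable version to extract $H$ with $\M \models_{Y[H/\cdots]} \psi'$ and $R_i \subseteq W_i$ for all $i$; then $\bigcup_i R_i \subseteq \bigcup_i W_i$, and since $\D$ is downwards closed the hypothesis $\langle M, \bigcup_i W_i\rangle \in \D$ yields $\langle M, \bigcup_i R_i\rangle \in \D$, whence Lemma \ref{lemma:union} gives $\M \models_{Y[H/\cdots]} \D_\cup$. For the direction $2 \Rightarrow 1$, given $H$ witnessing $\D_\cup \wedge \psi'$, set $W_i := R_i$: Lemma \ref{lemma:union} immediately yields $\langle M, \bigcup_i W_i\rangle \in \D$, and the multi-variable Lemma \ref{lemma:qfree}, applied to the trivial inclusions $R_i \subseteq W_i$, produces $\M \models_Y \psi(W_1, \ldots, W_q)$.

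The main obstacle I expect is the multi-variable refinement of Lemma \ref{lemma:qfree}: one must verify that in the structural induction the choice functions required by the different $W_i$ can be amalgamated into a single $H$. This is genuinely delicate only at the disjunction case, where the induction hypothesis produces (possibly different) team splittings tailored to each $W_i$; however, since at most one $W_i$ occurs in each disjunct, the splittings can be reconciled by applying the construction already used in the proof of Lemma \ref{lemma:qfree} once for each $W_i$ independently. The positivity and at-most-once hypotheses are essential here, as they guarantee that at each node of $\psi$ at most one of the $W_i$ is relevant, so the local encodings never interfere.
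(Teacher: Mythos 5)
Your proposal is essentially the paper's own proof: peel off the quantifier prefix, handle the quantifier-free kernel via Lemma \ref{lemma:qfree}, and use Lemma \ref{lemma:union} together with downwards closure to pass between the relations $W_i$ and the encoded sets $R_i = Y\upharpoonright(\tuple v_i = \tuple w_i)(\tuple v_i)$. Both directions are argued exactly as in the paper. One caveat: your justification of the multi-variable version of Lemma \ref{lemma:qfree} rests on the claim that ``at most one $W_i$ occurs in each disjunct,'' which does not follow from the hypotheses --- each $W_i$ occurs at most once \emph{in the whole formula}, but several distinct $W_i$ may well sit inside the same disjunct. The argument survives because the sets of symbols occurring in the two disjuncts are disjoint (each $W_i$ lives in at most one of them), so the multi-variable induction hypothesis applied to each disjunct already yields a single splitting per disjunct and there is nothing to amalgamate across different $W_i$ within one disjunct. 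The paper sidesteps this bookkeeping entirely by applying the single-variable Lemma \ref{lemma:qfree} iteratively, $q$ times, introducing one pair $\tuple v_i \tuple w_i$ per application (the intermediate formulas remain quantifier-free with each remaining $W_j$ occurring at most once and positively); you may want to adopt that route, as it makes the ``obstacle'' you identify disappear rather than requiring a separate lemma.
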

\begin{proof}
	Suppose that 1. holds. Then there exist  $W_1 \ldots W_q$ such that\\$\left\langle M, \bigcup_i W_i\right \rangle \in \D$ and $\M \models \chi(W_1 \ldots W_q)$. Then in particular there exist functions $H_1 \ldots H_n$ such that, for $X = \{\epsilon\}[M / \tuple x_1] [H_1 / \tuple y_1] \ldots[M/\tuple x_n][H_n/\tuple y_n]$, $\M \models_X \psi(W_1 \ldots W_q)$. Applying repeatedly Lemma \ref{lemma:qfree}, we can find $K_1 \ldots K_q$ such that, for $Y = X[K_1/\tuple v_1 \tuple w_1]\ldots[K_q/\tuple v_q \tuple w_q]$,
	\begin{enumerate}[a)]
		\item $\M \models_{Y} \psi'(\tuple v_1 \tuple w_1 \ldots \tuple v_q \tuple w_q)$; 
		\item For all $i \in 1 \ldots q$, $Y\upharpoonright (\tuple v_i = \tuple w_i)(\tuple v_i) \subseteq W_i$.
	\end{enumerate}
	Now, for all $i$, let $W'_i  = Y\upharpoonright (\tuple v_i = \tuple w_i)(\tuple v_i)$. Then since $\D$ is downwards closed and $\bigcup_i W'_i \subseteq \bigcup_i W_i$ we have that $\M \models_Y \D(\bigcup_i W'_i)$, and hence $\M \models_Y \D_\cup(\tuple v_1 \ldots \tuple v_q ; \tuple w_1 \ldots \tuple w_q)$ by Lemma \ref{lemma:union}. This proves that 
	\[
		\M \models_X \exists \tuple v_1 \tuple w_1 \ldots \tuple v_q \tuple w_q (\D_\cup(\tuple v_1 \ldots \tuple v_q; \tuple w_1 \ldots \tuple w_q) \wedge \psi'(\tuple v_1 \tuple w_1 \ldots \tuple v_q \tuple w_q))
	\]
	and hence that 2. holds. \\

	Conversely, suppose that 2. holds. Then there exist functions $H_1 \ldots H_n$, $K_1 \ldots K_q$ such that, for $X = \{\epsilon\}[M/\tuple x_1][H_1/\tuple y_1]\ldots[M/\tuple x_n][H_n/\tuple y_n]$ and for $Y = X[K_1 /\tuple v_1 \tuple w_1]\ldots [K_q/\tuple v_q \tuple w_q]$, $\M \models_Y \D_\cup(\tuple v_1 \ldots \tuple v_q; \tuple w_1 \ldots \tuple w_q) \wedge \psi'(\tuple v_1 \tuple w_1 \ldots \tuple v_q \tuple w_q)$. Now, for each $i = 1 \ldots q$, let $W_i = Y\upharpoonright (\tuple v_i = \tuple w_i)(\tuple v_i)$. Then by Lemma \ref{lemma:union} we have that $\left \langle M, \bigcup_i W_i \right \rangle \in \D$; and furthermore, by Lemma \ref{lemma:qfree} $\M \models_Y \psi(W_1 \ldots W_n)$. But then by locality $\M \models_X \psi(W_1 \ldots W_n)$, and hence $\M \models \chi(W_1 \ldots W_n)$ and 1. holds. 
\end{proof}

We can now finally prove the main result of this section: 
\begin{Theorem}
	Let $\DD$ be any family of downwards closed dependencies and let $\SSd$ be any strongly first order family of dependencies. Then $\SSd$ is safe for $\DD$. 
\label{thm:safe}
\end{Theorem}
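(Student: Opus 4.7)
The plan is to chain together the four preparatory lemmas that have just been built for precisely this purpose. Fix a sentence $\phi \in \FO(\DD, \SSd)$; the goal is to exhibit an equivalent sentence of $\FO(\DD)$.

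First, I would invoke Lemma \ref{lemma:removeSFO} to obtain a first-order sentence $\chi(S_1, \ldots, S_n)$ in which every $S_i$ occurs only positively, together with dependencies $\D_{j(1)}, \ldots, \D_{j(n)} \in \DD$, such that for every model $\M$,
\[
\M \models \phi \iff \exists S_1, \ldots, S_n \bigl(\langle M, S_i\rangle \in \D_{j(i)} \text{ for all } i \text{ and } \M \models \chi(S_1, \ldots, S_n)\bigr).
\]
Next, for each $i$ separately I would apply Lemma \ref{lemma:singleocc} with $\D = \D_{j(i)}$ to ``split'' each $S_i$ into fresh relation symbols $W_{i,1}, \ldots, W_{i,m_i}$, each occurring at most once and only positively in the resulting first-order sentence $\chi^{\sharp}$, and with the membership condition now reading $\bigl\langle M, \bigcup_k W_{i,k}\bigr\rangle \in \D_{j(i)}$.

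Then I would put $\chi^{\sharp}$ in prenex Quantifier Normal Form $\forall \tuple x_1 \exists \tuple y_1 \ldots \forall \tuple x_r \exists \tuple y_r \psi$, with $\psi$ quantifier-free; this normalization does not affect the positivity or uniqueness of the $W_{i,k}$. I would then apply Lemma \ref{lemma:putback} iteratively, once for each $i = 1, \ldots, n$, with dependency $\D = \D_{j(i)}$ and relation symbols $W_{i,1}, \ldots, W_{i,m_i}$. Each application eliminates the existential quantification over the current group of $W_{i,k}$ in favour of a block of new first-order existentials over tuples $\tuple v_{i,k}, \tuple w_{i,k}$, rewrites each occurrence $W_{i,k} \tuple t$ in the matrix as $\tuple v_{i,k} = \tuple w_{i,k} \wedge \tuple t = \tuple w_{i,k}$, and inserts a conjunct $\D_{j(i),\cup}(\tuple v_{i,1} \ldots; \tuple w_{i,1} \ldots)$. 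Since $\D_\cup$ is itself in $\forall \exists$ form (Lemma \ref{lemma:union}), after each application I would pull its quantifiers into the global prenex prefix, restoring the Quantifier Normal Form required as input for the next invocation. After $n$ such iterations every $W_{i,k}$ has been removed, and the result is a sentence of $\FO(\DD)$ equivalent to $\phi$.

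The main technical obstacle is the iterative step just described: one must check that, after extracting the quantifiers of the newly introduced $\D_\cup$ conjunct, the remaining symbols $W_{i',k}$ (with $i' > i$) still occur at most once, still occur only positively, and still sit under a prenex prefix in Quantifier Normal Form so that Lemma \ref{lemma:putback} can be applied again. This is bookkeeping rather than substance---each round introduces only fresh variables and fresh conjuncts, touches only the $W_{i,k}$ of the current group, never duplicates nor negates any $W_{i',k}$, and simply lengthens the $\forall \exists$ alternation by one block---but it is the one place where the argument must be executed with some care. (A harmless technicality: Lemma \ref{lemma:union} requires the model to have at least two elements; the finitely many one-element models form a first-order definable class and can be treated separately by a Boolean disjunction.)
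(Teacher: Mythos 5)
Your proposal is correct and follows essentially the same route as the paper's proof: Lemma \ref{lemma:removeSFO} followed by Lemma \ref{lemma:singleocc}, then Quantifier Normal Form, then iterated applications of Lemma \ref{lemma:putback}; the only (immaterial) difference is that where you re-prenex the quantifiers of each newly introduced $\D_\cup$ conjunct, the paper instead treats the already-inserted $(\D_{j(i)})_\cup$ subformulas as atomic so that the expression stays in normal form. Your remark about one-element models is a detail the paper glosses over, but it is handled exactly as you suggest.
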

\begin{proof}
	Let $\phi$ be a sentence of $\FO(\SSd, \DD)$. Then, by Lemmas \ref{lemma:removeSFO} and \ref{lemma:singleocc}, there exist a first order formula $\theta(W_1^1 \ldots W_1^{r(1)}, \ldots, W_k^1 \ldots W_k^{r(k)})$, in which each $W_i^j$ occurs only once, and dependencies $\D_{j(1)} \ldots \D_{j(k)}\in \DD$ such that the following are equivalent for all models $\M$:
	\begin{enumerate}[A)]
	\item  $\M \models \phi$; 
	\item There exist relations $W_1^1 \ldots W_1^{r(1)}, \ldots, W_k^1 \ldots W_k^{r(k)}$ such that
	\begin{itemize}
		\item For all $i = 1 \ldots k$, $\left \langle M, \bigcup_{t=1}^{r(i)} W_i^t\right\rangle \in \D_{j(i)}$;
		\item $\M \models \theta(W_1^1 \ldots W_1^{r(1)}, \ldots, W_k^1 \ldots W_k^{r(k)})$. 
	\end{itemize}
	\end{enumerate}

	Now let $\theta(W_1^1 \ldots W_1^{r(1)}, \ldots, W_k^1 \ldots W_k^{r(k)})$ have Quantifier Normal Form 
	\[
		\forall \tuple x_1 \exists \tuple y_1 \ldots \forall \tuple x_l \exists \tuple y_l \psi(W_1^1 \ldots W_1^{r(1)}, \ldots, W_k^1 \ldots W_k^{r(n)})
	\]
	But applying repeatedly Lemma \ref{lemma:putback} and the fact that all $\D_{j(i)}$ are downwards closed\footnote{For the applications of Lemma \ref{lemma:putback} after the first one we can treat the $(\D_{j(i)})_\cup$ already introduced as ordinary dependency atoms, so that the expression is still in Quantifier Normal Form.} we can see that there exists some $\FO(\DD)$ sentence 
\begin{align*}
\phi^* := & \forall \tuple x_1 \exists \tuple y_1 \ldots \forall \tuple x_l \exists \tuple y_l ~  \exists \tuple v_1^1 \tuple w_1^1 \ldots \tuple v_1^{r(1)} \tuple w_1^{r(1)} \ldots \tuple v_k^1 \tuple w_k^1 \ldots \tuple v_k^{r(k)} \tuple w_k^{r(k)}\\
&  (
(\bigwedge_i (\D_{j(i)})_\cup(\tuple v_i^1 \ldots \tuple v_i^{r(i)}, \tuple w_i^1 \ldots \tuple w_i^{r(i)})) \wedge\\
& ~~\psi'(\tuple v_1^1 \tuple w_1^1 \ldots \tuple v_1^{r(1)} \tuple w_1^{r(1)} \ldots 
\tuple v_k^1 \tuple w_k^1 \ldots \tuple v_k^{r(k)} \tuple w_k^{r(k)})
)
\end{align*}
 which is true if and only if B) above holds. Thus, every $\FO(\SSd, \DD)$ sentence is equivalent to some $\FO(\DD)$ sentence - in other words, $\SSd$ is safe for $\DD$. 
\end{proof}
\section{Characterizing strongly first order, relativizable, downwards\\ closed dependencies}
We can now prove that all dependencies that are nontrivial, relativizable, downwards closed and strongly first order are definable in terms of constancy. To do so, we will need to use the Chang-Makkai Theorem, applying it not to unary relations (as is presented e.g. in Theorem 5.3.6 of \cite{chang1990model}) but to $k$-ary relations (as mentioned in \cite{chang1964some}, the proof carries over to this case without problems. For reference, the Appendix contains the detailed proof of the case that interests us). On the other hand, it suffices to consider countable structures and vocabularies containing only a relation symbol $R$. 

So this is the variant of the Chang-Makkai Theorem that we need: 
\begin{Theorem}
	Let $\Phi(R)$ be a first order sentence on the vocabulary $\{R\}$, where $R$ is $k$-ary. The following are equivalent: 
	\begin{enumerate}
		\item For every countable model $\M$ over the signature $\{R\}$, $|\{R : R \subseteq M^k, \M \models \Phi(R)\}| < \aleph_1$; 
		\item There are a finite number of formulas $\theta_1(\tuple x, \tuple z) \ldots \theta_n(\tuple x, \tuple z)$, over the empty vocabulary, such that 
			\[
				\phi(R) \models \bigvee_{i=1}^n \exists \tuple z \forall \tuple x (R \tuple x \leftrightarrow \theta_i(\tuple x, \tuple z)).
			\]
	\end{enumerate}
	\label{thm:changmakkai}
\end{Theorem}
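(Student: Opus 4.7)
The plan is to prove the two directions separately. The implication (2) $\Rightarrow$ (1) is a routine cardinality count: in a countable domain $M$ there are only countably many parameter tuples $\tuple b$ and only $n$ formulas $\theta_i$, so the collection $\{\{\tuple a \in M^k : \theta_i(\tuple a, \tuple b)\} : i \leq n,\ \tuple b \in M^{<\omega}\}$ has cardinality at most $\aleph_0$, and by hypothesis it contains every $R$ with $\langle M, R\rangle \models \Phi(R)$.

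For (1) $\Rightarrow$ (2) I would argue the contrapositive. Fix a countably infinite set $M$ and assume that for every finite list $\theta_1, \ldots, \theta_n$ of equality formulas there exists a model $\langle M', R'\rangle \models \Phi(R')$ such that $R'$ is not of the form $\{\tuple a : \theta_i(\tuple a, \tuple b)\}$ for any $i \leq n$ and any $\tuple b \in (M')^{<\omega}$. The aim is to construct $2^{\aleph_0}$ distinct $R \subseteq M^k$ with $\langle M, R\rangle \models \Phi(R)$, contradicting (1). I would do this via a tree-of-finite-approximations. Enumerate all equality formulas $\theta_0(\tuple x, \tuple z_0), \theta_1(\tuple x, \tuple z_1), \ldots$ and all finite tuples $\tuple b_0, \tuple b_1, \ldots$ from $M$. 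Build a binary tree $\{(P_\sigma, N_\sigma) : \sigma \in 2^{<\omega}\}$ of pairs of disjoint finite subsets of $M^k$ (the \emph{positive} and \emph{negative} constraints for the eventual relation) maintaining the invariant that some $R \subseteq M^k$ extends $P_\sigma$, is disjoint from $N_\sigma$, and satisfies $\langle M, R\rangle \models \Phi(R)$. The splitting at level $n$ exploits the negation of (2) applied to $\theta_0, \ldots, \theta_n$ together with the finite diagram assembled so far: by downward L\"owenheim--Skolem one may assume the witnessing model has domain $M$, and inside it one locates a tuple $\tuple d \in M^k$ on which the witnessing $R$ disagrees with $\theta_n(\tuple x, \tuple b)$ for every parameter $\tuple b$ listed up to stage $\sigma$; one then branches by setting $(P_{\sigma 0}, N_{\sigma 0}) = (P_\sigma, N_\sigma \cup \{\tuple d\})$ and $(P_{\sigma 1}, N_{\sigma 1}) = (P_\sigma \cup \{\tuple d\}, N_\sigma)$. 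Each branch $f \in 2^\omega$ then determines a consistent pair $(P_f, N_f)$, and by compactness there is some $R_f$ extending $P_f$, disjoint from $N_f$, and satisfying $\Phi$. Distinct branches yield distinct $R_f$ (the splitting tuple witnesses the difference), and the diagonalisation over all pairs $(\theta_i, \tuple b_j)$ guarantees that no $R_f$ is of the form $\{\tuple a : \theta_i(\tuple a, \tuple b)\}$ for any $i$ and $\tuple b$.

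The main obstacle is the splitting step. The failure of (2) supplies a ``free'' tuple in \emph{some} model of $\Phi(R) \cup \{R(\tuple p) : \tuple p \in P_\sigma\} \cup \{\neg R(\tuple n) : \tuple n \in N_\sigma\}$, but one must transport this witness to the fixed countable domain $M$ so that all $2^{\aleph_0}$ branches can coexist inside a single base set. This is handled by combining the negation of (2) with L\"owenheim--Skolem and a suitable isomorphism onto $M$, taking care that each splitting only pins down finitely many values so subsequent stages retain their freedom. The remainder is the standard tree-to-continuum bookkeeping, ensuring that every equality formula and every parameter tuple is eventually diagonalised against along every branch.
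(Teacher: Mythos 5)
Your direction (2) $\Rightarrow$ (1) is fine, but the tree construction for $\lnot(2) \Rightarrow \lnot(1)$ has two genuine gaps. First, the splitting step is not justified. To branch at a node $\sigma$ you need \emph{two} relations over $M$, both satisfying $\Phi$ and both compatible with $(P_\sigma, N_\sigma)$, that disagree at some tuple $\tuple d$; locating a single witnessing relation $R$ and a tuple on which $R$ disagrees with $\theta_n(\cdot,\tuple b)$ for the finitely many parameters listed so far does not give this -- the membership of $\tuple d$ in $R$ cannot in general be flipped while preserving $\Phi$ (think of $\Phi$ saying ``$R$ is a linear order''). Deriving the existence of such a disagreeing pair of relations from mere non-definability is the real content of the theorem, and your proposal does not supply an argument for it. The paper's proof gets around this by working inside a single countable \emph{recursively saturated} model $\langle M,R\rangle$ of $\Phi(R)$ together with all the non-definability axioms: there, if no pair $\tuple b_0\in R$, $\tuple b_1\notin R$ realized the same type over the finitely many parameters accumulated so far, a saturation argument would produce a parametric definition of $R$, a contradiction. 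The continuum many relations are then obtained as images of the one fixed $R$ under $2^{\aleph_0}$ distinct automorphisms built by a back-and-forth that sends $\tuple b_0$ and $\tuple b_1$ to a common target; being isomorphic copies of $\langle M,R\rangle$, all these images satisfy $\Phi$ for free.

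Second, your limit stage does not work as stated. Compactness applied to $\Phi(R)\cup\{R\tuple p:\tuple p\in P_f\}\cup\{\lnot R\tuple n:\tuple n\in N_f\}$ yields a model whose domain need not be $M$: the elements of $M$ named in $P_f\cup N_f$ (potentially all of $M$) are only mapped \emph{injectively} into the new domain, $\Phi$ is not preserved under substructures so you cannot simply pull the relation back to $M$, and no first order theory can force the interpretation map to be surjective. But statement (1) counts satisfying relations over a \emph{single} countable model, so all $2^{\aleph_0}$ relations must be realized on one and the same domain. Again, the automorphism device in the paper's proof is precisely what guarantees this. As written, your argument establishes neither that the branching can always be carried out nor that each branch yields a relation on $M$ satisfying $\Phi$, so the key implication remains unproved.
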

The idea of this section's main proof is to show that if $\D$ is strongly first order then the property $\Dmax$ of being maximal among the $R$ that satisfy $\D$ is also strongly first order; that whenever $\langle M, R\rangle \in \D$ there is at least one $R' \supseteq R$ such that $\langle M, R'\rangle \in \Dmax$; and that for every countable model there must be a countable number of such maximal $R'$. Then we use the above version of the Chang-Makkai Theorem to show that these maximal sets are definable over the empty signature, and use the downwards closure property to define the $\D$ atoms via Boolean disjunctions (that is, unraveling Definition \ref{def:booldisj}, via constancy atoms). 

\begin{Proposition}
	Let $\D$ be a downwards closed, strongly first order dependency. Then there exists a first order dependency $\Dmax$ such that $\langle M, R\rangle \in \Dmax$ if and only if $\langle M, R\rangle \in \D$ and there is no $S \supsetneq R$ such that $\langle M, S\rangle \in \D$. Furthermore, $\Dmax$ is strongly first order itself.
	\label{propo:DmaxFO}
\end{Proposition}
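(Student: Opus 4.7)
The plan is twofold, matching the two assertions in the statement.

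For the first-order definability of $\Dmax$, I would exploit downwards closure to reduce ``no strict superset lies in $\D$'' to ``no single-tuple extension lies in $\D$''. Since $\D$ is strongly first order, by Definition \ref{def:stronglyFO} it is in particular first order, so fix an FO sentence $\phi_\D(R)$ with $\langle M, R\rangle \in \D$ iff $\langle M, R\rangle \models \phi_\D(R)$. Downwards closure then gives the key equivalence: $R$ is maximal in $\D$ iff $R \in \D$ and for every $\tuple x \notin R$, $R \cup \{\tuple x\} \notin \D$. Indeed, if some proper superset $S \supsetneq R$ were in $\D$, picking $\tuple x \in S \setminus R$ yields $R \cup \{\tuple x\} \subseteq S$, which is in $\D$ by downwards closure; conversely any single-tuple extension that is in $\D$ already witnesses non-maximality. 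Now let $\phi_\D^+(R, \tuple x)$ be obtained from $\phi_\D(R)$ by replacing each atomic occurrence $R \tuple y$ by $R \tuple y \vee \tuple y = \tuple x$ (renaming bound variables where needed), so that $\phi_\D^+(R, \tuple x)$ holds iff $R \cup \{\tuple x\} \in \D$. Then
\[
\phi_{\Dmax}(R) := \phi_\D(R) \wedge \forall \tuple x \bigl(R \tuple x \vee \neg \phi_\D^+(R, \tuple x)\bigr)
\]
is a first order sentence defining $\Dmax$.

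For the strong first-orderness of $\Dmax$, the strategy I would follow is to rewrite every sentence $\phi \in \FO(\Dmax)$ into a sentence $\phi' \in \FO(\D)$ equivalent to it in every model, and then invoke strong first-orderness of $\D$ to obtain a first order equivalent. The rewrite proceeds atom-by-atom: each $\Dmax \tuple t$ becomes $\D \tuple t$ conjoined with a team-semantics rendering of the maximality clause, which has the natural shape $\forall \tuple u \bigl( \text{``}\tuple u \text{ is already a value of } \tuple t\text{''} \vee \text{``}X(\tuple t) \cup \{\tuple u\} \notin \D\text{''}\bigr)$. I envisage implementing both disjuncts using fresh variable tuples $\tuple v \tuple w$ coding $X(\tuple t)$ and its single-tuple augmentation in the style of the $\tuple v_i = \tuple w_i$ representation from Lemma \ref{lemma:union}, and then expressing the negated membership in $\D$ of the augmented relation by applying the explicit first order formula $\neg \phi_\D^+$ to the coded tuples via the selective implication $\hookrightarrow$ of Definition \ref{def:selimp}. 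Carrying out this rewrite through the connectives and quantifiers by structural induction yields an equivalent $\FO(\D)$-sentence, after which strong first-orderness of $\D$ delivers an FO sentence equivalent to $\phi$.

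The main obstacle is precisely the team-semantics translation of the maximality clause: because $\FO(\D)$-formulas in negation normal form have no negated $\D$-atom available, the statement ``the augmented relation is not in $\D$'' cannot be voiced by a negated atom and must be pushed through to the explicit first order defining sentence $\phi_\D^+$ established in Part 1. The encoding also has to juggle the interaction between the universally quantified ``candidate extension'' $\tuple u$ and the auxiliary $\tuple v \tuple w$ tuples that code the team-induced relation $X(\tuple t)$ together with its augmentation $X(\tuple t) \cup \{\tuple u\}$, but once the representation is pinned down the rest of the translation is a routine structural induction in the spirit of Lemmas \ref{lemma:qfree} and \ref{lemma:putback}, and no new ideas beyond those developed in the proof of Theorem \ref{thm:safe} should be required.
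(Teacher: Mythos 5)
Your first half is fine, and it is actually a more elementary route than the paper's: the paper obtains the first order definability of the maximality clause by writing the $\FO(\D)$ sentence $\exists \tuple x(\lnot R\tuple x \wedge \forall \tuple y((\lnot R\tuple y \wedge \tuple x \neq \tuple y)\vee \D\tuple y))$ and invoking strong first orderness of $\D$ to convert it to $\FO$, whereas your reduction to single-tuple extensions via downwards closure needs only that $\D$ is a first order class. Either works.

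The second half has a genuine gap, and the obstruction is structural. Since $\D$ is downwards closed, Proposition \ref{propo:clos_cons} guarantees that \emph{every} formula of $\FO(\D)$ is downwards closed as a property of teams; but $\Dmax\tuple t$ is not downwards closed (for $\D$ the constancy dependency, for instance, $\Dmax$ holds exactly of the singleton relations, so it fails for the empty subteam of a team in which it holds). Hence no atom-by-atom replacement of $\Dmax\tuple t$ by an equivalent $\FO(\D)$ \emph{formula} can exist, and the specific encoding you sketch also breaks down at both disjuncts: ``$\tuple u$ is already a value of $\tuple t$'' is an inclusion atom, unavailable in $\FO(\D)$, and ``$X(\tuple t)\cup\{\tuple u\}\notin\D$'' would require evaluating $\lnot\phi_{\D}^{+}$ on the team-induced relation $X(\tuple t)$, which is not a relation symbol of the model (the $\tuple v=\tuple w$ coding of Lemma \ref{lemma:union} represents relations chosen by an existential supplementation, and tying such a coded relation to $X(\tuple t)$ would again need inclusion atoms). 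The paper's proof works at the level of \emph{sentences} instead: it observes that the maximality clause $\psi(R)$ is upwards closed in $R$, so the dependency $\E$ it defines is strongly first order by Theorem \ref{thm:upwards}; then $\Dmax\tuple t$ is definable as $\D\tuple t\wedge\E\tuple t$ in $\FO(\D,\E)$, and Theorem \ref{thm:safe} (strongly first order dependencies are safe for downwards closed ones) converts every $\FO(\D,\E)$ sentence into an $\FO(\D)$ sentence, which strong first orderness of $\D$ then turns into a first order one. The upward-closure observation and the appeal to the safety theorem are the two ideas your proposal is missing, and without them the translation you describe cannot be completed.
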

\begin{proof}
	Since $\D$ is strongly first order, the $\FO(\D)$ sentence 
	\[
		\phi(R) := \exists \tuple x (\lnot R \tuple x \wedge \forall \tuple y ( (\lnot R \tuple y \wedge \tuple x \not = \tuple y) \vee \D \tuple y))
	%
	\]
	is equivalent to some first order sentence. Observe that $\M \models \phi(R)$  if and only if there exists some $S$ such that $R \subsetneq S$ and $\langle M, S\rangle \in \D$. Indeed, suppose that such a $S$ exists and let $\tuple a \in S \backslash R$. Then, choosing $\tuple a$ as the only value for $\tuple x$,\footnote{More precisely: picking $\tuple x$ according to the function $H: \{\epsilon\} \rightarrow \parts(M) \backslash \{\emptyset\}$ such that $H(\epsilon) = \tuple a$.} we have that $\M \models_{\{(\tuple x : \tuple a)\}} \lnot R \tuple x$. Now let $X = \{(\tuple x: \tuple a)\}[M/\tuple y]$, and split it into 
	$Z = \{s \in X : s(\tuple y) \in S\}$ and $Y = X \backslash Z = \{s \in X : s(\tuple y) \not \in S\}$. Clearly $\M \models_Z \D \tuple y$, because $Z(\tuple y) = S$. Moreover, for all $s \in Y$ we have that $s(\tuple y) \not \in R$, because $s(\tuple y) \not \in S$ and $R \subseteq S$, and that $s(\tuple y) \not = s(\tuple x)$, because $s(\tuple x) = \tuple a \in S$. Thus we have that $\M \models_Y \lnot R \tuple y \wedge \tuple x \not = \tuple y$, as required. 

	Conversely, suppose that $\M \models_{\{\epsilon\}}  \phi(R)$, where $\epsilon$ is the empty assignment. Then there exists some $X = \{\epsilon\}[K/\tuple x][M/\tuple y]$ such that $X(\tuple x) \cap R = \emptyset$ and $\M \models_X (\lnot R \tuple y \wedge \tuple x \not = \tuple y) \vee \D \tuple y$. Thus, $X = Y \cup Z$ for two $Y$, $Z$ such that $\M \models_Y (\lnot R \tuple y \wedge \tuple x \not = \tuple y)$ and $\M \models_Z \D \tuple y$. Let $S = Z(\tuple y)$: then $\langle M, S\rangle \in \D$. Furthermore, $R$ is contained in $S$: indeed, for all $\tuple m \in R$ there is some $s \in X$ is such that $s(\tuple y) = \tuple m$, and then $s \in Z$, because $\M \models_Y \lnot R \tuple y$ and $X = Y \cup Z$. Finally, $S$ is not contained in $R$: indeed, if $\tuple a$ is a possible value for $\tuple x$ in $X$ then $\tuple a \not \in R$ and $\tuple a \in S$ (because the assignment $(\tuple x \tuple y: \tuple a \tuple a)$ is in $X$ but not in $Y$).
%

	If $\phi(R)$ is equivalent to a first order sentence, the same is true of its negation. Let $\psi(R)$ be the first order sentence equivalent to the negation of $\phi(R)$: then $\M \models \psi(R)$ if and only if there is no $S \supsetneq R$ such that $\langle M, S\rangle \in \D$.
	Thus $\Dmax(R)$ is equivalent to $\D(R) \wedge \psi(R)$, and therefore it is first order. It remains to show that it is \emph{strongly} first order as a dependency.\\

	 Now $\psi(R)$ is upwards closed in $R$: indeed, if there is no $S \supsetneq R$ such that $\D(S)$ and $R \subseteq R'$ then clearly there is no $S \supsetneq R'$ such that $\D(S)$ either. By Theorem \ref{thm:upwards}, upwards closed dependencies are strongly first order; therefore, the dependency $\E(R) = \{\langle M, R\rangle: \langle M, R\rangle \models \psi(R)\}$ is strongly first order. 

 But by Theorem \ref{thm:safe}, strongly first order dependencies are safe for downwards closed dependencies. So in particular $\E$ is safe for $\D$, and the atom $\Dmax(\tuple t)$ is definable in $\FO(\D, \E)$ as $\Dmax(\tuple t) := \D \tuple t \wedge \E\tuple t$. Therefore every sentence $\phi \in \FO(\Dmax)$ is equivalent to some sentence $\phi' \in \FO(\D, \E)$, which by the safety of $\E$ for $\D$ is equivalent to some sentence $\phi'' \in \FO(\D)$, which by the strongly first orderness of $\D$ is equivalent to some first order sentence. Therefore $\Dmax$ is strongly first order, as required.
\end{proof}

The following consequence of the above proposition will also be necessary for our proof:
\begin{Corollary}
	Let $\D(R)$ be a downwards closed, strongly first order dependency and let $M, R$ be such that $\langle M, R\rangle \in \D$. Then there exists a $S \supseteq R$ such that $\langle M, S\rangle \in \Dmax$.
	\label{coro:maximal}
\end{Corollary}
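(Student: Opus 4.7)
The plan is to argue by contradiction. Suppose $\langle M, R\rangle \in \D$ but no $S \supseteq R$ lies in $\Dmax$; since by the previous proposition $\Dmax$ is first-order definable and equivalent to $\D \wedge \lnot \phi$, this means that every $S \supseteq R$ with $\langle M, S\rangle \in \D$ must satisfy $\phi(S)$, so strict $\D$-extensions of $R$ can always be produced. A naive application of Zorn's Lemma to $\mathcal F := \{S \supseteq R : \langle M, S\rangle \in \D\}$ is however insufficient, because its chain-closure hypothesis can fail even for strongly first order, downwards closed dependencies: for the non-totality dependency $\NT$, the chain $\{0\} \subsetneq \{0,1\} \subsetneq \ldots$ of finite subsets of $\mathbb N$ has union $\mathbb N \notin \NT$, though the corollary does still hold for $\NT$ via the direct ``jump'' $R \mapsto M \setminus \{a\}$ for any $a \notin R$. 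The proof must therefore exploit strong first-orderness substantively, not merely the downward closure of $\D$.

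My intended approach is to proceed via Theorem \ref{thm:safe}: since the previous proposition shows that $\Dmax$ is itself strongly first order and $\D$ is downwards closed, Theorem \ref{thm:safe} yields that $\Dmax$ is safe for $\D$, so any $\FO(\D, \Dmax)$ sentence is equivalent to an $\FO(\D)$ sentence and, by the strong first-orderness of $\D$ itself, to a first-order sentence. Mimicking the construction of $\phi$ from the proof of the previous proposition but with $\Dmax$ atoms in place of $\D$ atoms, I would produce an $\FO(\D, \Dmax)$ sentence equivalent to ``$\exists S \supseteq R, \Dmax(S)$'' (the analogue of $\phi$ here is natural because the sentence in the previous proposition already isolates an arbitrary strict extension in $\D$ from the team, and the same machinery adapted to $\Dmax$ should isolate instead an arbitrary extension in $\Dmax$). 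Reducing the resulting sentence first to $\FO(\D)$ via Theorem \ref{thm:safe} and then to a first-order sentence via strong first-orderness of $\D$, and verifying this first-order sentence as a consequence of $\delta(R)$ in $\M$, would give the corollary and bypass the need for any transfinite chain-building argument.

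The main obstacle is the team-semantic encoding of ``$R \subseteq S$ and $\Dmax(S)$'' in a way that makes the translations afforded by Theorem \ref{thm:safe} go through cleanly. The previous proposition's trick --- a universal quantifier together with a disjunction carving the team into a ``waste'' part and a $\D$-witnessing part $Z(\tuple{y})$ --- must be augmented so that the surviving $Z(\tuple{y})$ lies in $\Dmax$ rather than merely $\D$ while still containing all of $R$ as a subset. This is where the interplay between the strong first-orderness of $\Dmax$ and the downward closure of $\D$ must do the real work.
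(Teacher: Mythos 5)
Your diagnosis of the difficulty is exactly right: Zorn's Lemma is unavailable because chains in $\D$ need not have upper bounds in $\D$ (your $\NT$ example is apt), so strong first orderness must carry the load. Your construction of the sentence $\forall \tuple x(\lnot R\tuple x \vee \Dmax\tuple x)$, equivalent to ``some $S \supseteq R$ satisfies $\Dmax$,'' and its reduction to a first order sentence $\theta'(R)$ via the strong first orderness of $\Dmax$, also matches the paper. But there is a genuine gap at the step you describe as ``verifying this first-order sentence as a consequence of $\delta(R)$ in $\M$.'' You give no mechanism for this verification, and it is precisely where all the mathematical content of the corollary lives: you must show that the first order implication $\D(R) \rightarrow \theta'(R)$ is valid in \emph{all} models, and for a sentence containing the relation symbol $R$ there is no general transfer principle that would let you conclude this from, say, its truth in finite models. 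Note also that Theorem \ref{thm:safe} is not actually needed to reduce $\forall \tuple x(\lnot R\tuple x \vee \Dmax\tuple x)$ to first order logic (that sentence contains no $\D$ atoms, only $\Dmax$ atoms); safety is needed elsewhere, in a step your proposal does not contain.

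The paper's resolution is to internalize the quantification over $R$ itself. It observes that the negation $\chi(T)$ of $\theta(T)$ (``no $K \supseteq T$ is in $\Dmax$'') is \emph{upwards closed} in $T$, hence defines a strongly first order dependency $\F$ by Theorem \ref{thm:upwards}; it then forms the $\FO(\D,\F)$ sentence $\exists\tuple x(\D\tuple x \wedge \F\tuple x)$, in which the team $X(\tuple x)$ plays the role of the existentially quantified relation, so that the sentence asserts ``some relation satisfies $\D$ but has no maximal superset in $\D$.'' Theorem \ref{thm:safe} (applied to $\F$, which is strongly first order, over the downwards closed $\D$) together with the strong first orderness of $\D$ reduces this to a first order sentence $\eta$ over the \emph{empty} vocabulary. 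Over the empty vocabulary, a sentence false in every finite model is false in every model (a compactness argument), and $\eta$ is indeed false in all finite models since maximal supersets trivially exist there. Without this detour through the empty signature --- or some substitute for it --- your final verification step cannot be completed, so as written the proposal does not yield a proof.
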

\begin{proof}
	Consider the $\FO(\Dmax)$ sentence 
	\[
		\theta(T) := \forall \tuple x(\lnot T \tuple x \vee \Dmax \tuple x)
	\]
	It is easy to check that $\M \models \theta(T)$ if and only if there exists some $K \supseteq T$ such that $\langle M, K\rangle \in \Dmax$; and since, as we just saw, $\Dmax$ is strongly first order, $\theta(T)$ is equivalent to some first order sentence. Let $\chi(T)$ be the first order sentence equivalent to the negation of $\theta(T)$: then $\M \models \chi(T)$ if and only if there is no $K \supseteq T$ such that $\langle M, K\rangle \in \Dmax$. Furthermore, $\chi(T)$ is clearly upwards closed in $T$, since if there are no $K \supseteq T$ with $\langle M, K \rangle \in \Dmax$ and $T \subseteq T'$ then there are no $K \supseteq T'$ with $\langle M, K\rangle \in \Dmax$ either. 

	Therefore by Theorem \ref{thm:upwards} the dependency $\F = \{\langle M, T\rangle : \langle M, T\rangle \models \chi(T)\}$ is strongly first order. Consider now then the $\FO(\D, \F)$ sentence on the empty signature
	\[
		\exists \tuple x (\D \tuple x \wedge \F\tuple x).
	\]
	By construction, this is true if and only if there exists a relation $R$ which satisfies $\D(R)$ but is contained in no maximal superset; and since strongly first order dependencies are safe for downwards closed dependencies by Theorem \ref{thm:safe}, this is equivalent to some $\FO(\D)$ sentence and thus -- since $\D$ is strongly first order itself -- to some first order sentence. Therefore, there exists a first order sentence $\eta$ over the empty vocabulary such that $\M \models \eta$ if and only if there is some relation $R$ with domain $M$ which satisfies $\D(R)$ but which is not contained in any $R'$ satisfying $\Dmax(R)$. 

	This is clearly not true if the model $\M$ is finite, so for all finite models $\M$ we have that $\M \models \lnot \eta$. But then, since $\lnot \eta$ is a first order sentence over the empty signature, by compactness the same is true for all models, finite or infinite. 

	By the semantics of $\eta$ this implies that whenever $\langle M, R\rangle \in \D$ there exists some $R' \supseteq R$ such that $\langle M, R'\rangle \in \Dmax$, as required.
\end{proof}

As a quick aside, it is perhaps worth briefly pointing out here that this corollary fails for arbitrary first order dependencies: if $\langle M, R\rangle \in \D$, it is not necessarily the case that there exists some maximal $R' \supseteq R$ such that $\langle M, R'\rangle \in \D$. As a counterexample, let $\D(R)$ be the 4-ary dependency which is defined by the conjunction of the following axioms: 
\begin{itemize}
\item If we write ``$x \leq y$'' for $\exists z u R x y z u$, the relation $\leq$ is a linear order with endpoints; 
\item If we write ``$B z$'' for $\exists x y u R x y z u$, $B$ is not true of the starting point of the linear order, it is true of some element, and whenever it is true of some element it is true of its immediate predecessor along $\leq$ (if any exists); 
\item If we write ``$T u$'' for $\exists x y z R x y z u$, there exists some element $a$ such that $\lnot B a$ and such that for all $b$, $T b$ if and only if $b \leq a$ in the above order.
\end{itemize}
Then consider a model $\M$ with domain $\mathbb N \cup \{\infty\}$ and the team $X$  with domain $\{x,y,z,w\}$ defined as 
\begin{align*}
	&\{s :  s(x) \leq s(y) \text{, } s(z) = \infty \text{ and } s(u) \leq 1\}
\end{align*}
corresponding to the graphical representation
\begin{center}
\includegraphics{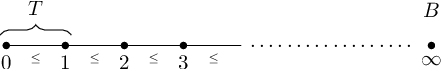}
\end{center}

Then it is easy to check that $\langle M, X(\tuple t)\rangle \in \D$, but no maximal $R' \supseteq X(\tuple t)$ satisfying $\D$ exists: the range of $T$ (that is, the projection of the relation over the fourth element) can be extended to any set of the form $\{n \in \mathbb N: n \leq m\}$ for any $m \in \mathbb N$ but not to $\mathbb N$ or to $\mathbb N \cup \{\infty\}$, while nothing can be added to the linear order $\leq$ (that is, to the projection over the first two elements) or to $B$ (that is, to the projection over the third element) without violating our axioms. 

The fact that whenever $\M \models_X \D \tuple t$ there exists some maximal $X' \supseteq X$ such that $\M \models_{X'} \D \tuple t$, therefore, is one that does not hold for all first order dependencies (although it is easy to check that it holds for functional dependence, independence and inclusion, and although we just proved that it holds for all downwards closed strongly first order dependencies).

\begin{Proposition}
	Let $A = \{a_1, a_2, \ldots\}$ be a countable set of elements and let $\D$ be a strongly first order, downwards closed, relativizable dependency of arity $k$. Then there exist only countably many $R \subseteq A^k$ such that $\langle A, R\rangle \in \Dmax$.
	\label{propo:countable}
\end{Proposition}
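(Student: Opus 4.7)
The plan is to establish the countability bound by leveraging the strong first-orderness of $\Dmax$ (established in the previous proposition) together with the hypothesis that $\D$ is relativizable, and then reducing the count to a parameter-counting argument in the style of Chang--Makkai's precondition.

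First I would show that $\Dmax$ inherits relativizability from $\D$. The construction of $\Dmax$ in the previous proposition built a $\FO(\D)$ formula $\phi(R)$ expressing the existence of a strict $\D$-extension, and then set $\Dmax \equiv \D \wedge \lnot \phi$. Because $\D$ is relativizable, the relativized atom $\D^P \tuple t$ is already in $\FO(\D)$; substituting $\D^P$ for $\D$ throughout and guarding all quantifiers by $P$ yields an $\FO(\D)$ formula capturing the relativized notion of "proper $\D^P$-extension inside $P^k$", and by the strong first-orderness of $\D$ this formula is equivalent to a first-order $\psi^P(R)$ on the signature $\{P, R\}$. Hence $\Dmax^P(R)$ is captured by a genuine FO sentence $\Phi^P(R)$ on $\{P, R\}$.

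Next, I would set up the counting argument. Fix a countable $A$, and, toward a contradiction, assume that the set $\mathcal{R} = \{R \subseteq A^k : \langle A, R\rangle \in \Dmax\}$ has cardinality at least $\aleph_1$. Enumerate $\aleph_1$ members as $\{R_\alpha : \alpha < \omega_1\}$ and form a single structure $\N$ whose domain is $A \sqcup \{c_\alpha : \alpha < \omega_1\}$, equipped with a unary predicate $P$ interpreted as $A$ and a $(k{+}1)$-ary relation $E$ with $E(\tuple a, c_\alpha) \Leftrightarrow \tuple a \in R_\alpha$. Over the signature $\{P, E\}$ one can write a purely first-order sentence $\eta$ asserting that (i) for every element $c$ outside $P$ the slice $\{\tuple a : E(\tuple a, c)\}$ satisfies $\Phi^P$, and (ii) distinct $c$'s outside $P$ yield distinct slices. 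By construction $\N \models \eta$.

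The main obstacle is to extract a genuine contradiction at this point: a bare application of downward Löwenheim--Skolem does not suffice, since a countable elementary substructure can perfectly well have a countable index set satisfying (i)--(ii). The delicate step will be to combine the FO-definability of $\Phi^P$ with a Beth- or Svenonius-style implicit-to-explicit definability transfer, arguing that on any countable $A$ each maximal $R$ must actually be first-order definable over $A$ from finitely many parameters of $A$; since the parameters range over a countable set and the defining schemata over a countable signature, this yields at most $\aleph_0$ possibilities for $R$ and contradicts the initial assumption. Carrying out this definability transfer cleanly — ensuring that the relativization machinery lets us talk about each slice individually rather than about their union — is where I expect the bulk of the technical work to sit.
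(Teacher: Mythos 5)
Your setup is on the right track -- the paper's proof also encodes an enumeration of the maximal relations by a $(k{+}1)$-ary relation over a structure with a unary predicate $P$ naming the copy of $A$, and also exploits the fact that $\Dmax^P(R)$ is expressible by a genuine first order sentence (in fact more cheaply than you do: since $\Dmax(R)$ is already known to be first order, one just relativizes its quantifiers syntactically; relativizability of $\Dmax$ itself is not needed). You also correctly diagnose that conditions (i)--(ii) alone give nothing: a countable elementary substructure can satisfy them with a countable index set. But the repair you propose is where the argument breaks. A Beth- or Svenonius-style transfer does not apply: those theorems concern a relation that is \emph{implicitly defined} (i.e.\ uniquely determined) by a first order condition, whereas here there are many maximal $R$'s and nothing pins any one of them down. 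Worse, the conclusion you want to extract from it -- that every maximal $R$ over a countable $A$ is first order definable from finitely many parameters -- is essentially clause 2 of the Chang--Makkai theorem (Theorem \ref{thm:changmakkai}), whose hard direction is exactly what Proposition \ref{propo:countable} is meant to feed into via clause 1. So your plan is circular: it proves countability by first proving the definability statement that countability is supposed to deliver.

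The missing idea is an \emph{exhaustiveness} axiom, and this is precisely where the relativizability of $\D$ (not of $\Dmax$) earns its keep. The $\FO(\D^P)$ sentence $\forall q\, \exists \tuple z\,(\lnot I\tuple z q \wedge \D^P\tuple z)$ asserts, via the team semantics of $\exists$ together with downwards closure and Corollary \ref{coro:maximal}, that some relation satisfying $\D$ inside $P$ -- hence some \emph{maximal} one -- escapes every slice $I\underline{~~}q$; since $\D$ is relativizable and strongly first order, this sentence is equivalent to a first order $\chi(I,P)$, and one adds $\lnot\chi(I,P)$ to the theory, forcing the slices of $I$ to cover \emph{all} maximal relations over $P$. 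With that axiom the theory has an uncountable model (index each of the putative uncountably many maximal relations by a distinct element outside $P$) but can have no countable model, since a countable index set cannot exhaust uncountably many maximal subsets of $P^k$ -- and $P$ is still a countable infinite set over the empty signature in any countable model, so it still carries uncountably many of them. This contradicts the downward L\"owenheim--Skolem theorem. Without the exhaustiveness axiom, or some substitute for it, your structure $\N$ yields no contradiction.
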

\begin{proof}
	Suppose that there exist uncountably many such $R$. Then let $I$ be a $(k+1)$-ary relation symbol, let $P$ be a unary symbol, and consider the first order theory $T$ containing the following axioms: 
	\begin{enumerate}[(a)]
		\item $\exists x_1 \ldots x_n \left(\bigwedge_{i<j\leq n} x_i \not = x_j \wedge \bigwedge_{i=1}^n Px_i\right)$, for all $n \in \mathbb N$;
		\item $\forall \tuple z \forall q (I \tuple z q \rightarrow \bigwedge_{z_i \in \tuple z} Pz_i)$;
		\item $\forall q \Dmax^P(I\h q)$, where $\Dmax^P(I\h q)$ is obtained from $\Dmax^P(R)$ (that is, from the relativization of $\Dmax(R)$ to $P$) by replacing every $R \tuple v$ with $I \tuple v q$; 
		\item $\lnot \chi(I, P)$, where $\chi(I, P)$ is the first order sentence equivalent\footnote{Such a first order sentence exists because $\D$ is strongly first order \emph{and} relativizable. Note that we do not need $\Dmax$ to be relativizable -- for (c) we only need that $\Dmax^P(R)$ is a first order sentence, which is certainly true since $\Dmax(R)$ is a first order sentence -- but just that $\D$ is relativizable.} to the $\FO(\D^P)$ sentence $\forall q \exists \tuple z( \lnot I \tuple z q \wedge \D^P \tuple z)$.
	\end{enumerate}
	I state that if our assumption is true, the above theory  -- that is first order and has a finite vocabulary -- has only uncountable models. This is clearly impossible by the L\"owenheim-Skolem Theorem, and therefore our assumption cannot hold.
	
	Indeed, suppose that $\M = \langle M, P, I\rangle$ is a countable model of $T$. Then by (a) $P$ is infinite and countable, and we can assume that it is $A$ up to isomorphism; by (b) and (c), for every $m \in M$ the relation $I \h m = \{\tuple a \in M^k: \langle \tuple a, m\rangle \in I\}$ is in $P^k$ and satisfies $\Dmax^P$ in $M$, that is to say satisfies $\Dmax$ in $P$; and by (d), as we will see, \emph{every} relation that satisfies $\Dmax$ in $P$ is equal to $I \h m$ for some $m \in M$. But we said that $A$ -- and, therefore, $P$ -- contains uncountably many distinct $R$ which satisfy $\Dmax(R)$ in it, and therefore this is impossible. 

	On the other hand -- again, postponing the verification that (d) holds if and only if $I$ enumerates all $R$ that satisfy $\Dmax(R)$ in $P$ -- there certainly exists an uncountable model $\M = \langle M, P, I\rangle$ of the above theory: let $M$ be a set of cardinality $2^{\aleph_0}$ containing $A$, let the interpretation of $P$ be $A$ itself, and let $I\h m$ range over all (uncountably many, but certainly no more than $|M| = 2^{|A|}$) $R \subseteq P^k$ such that $\langle P, R\rangle \in \Dmax$ as $m$ ranges over $M$. Thus our theory has an uncountable model but no countable models, which is impossible.\\

	We now verify that (d) holds if and only if $I\h m$ enumerates all relations that satisfy $\Dmax$ with respect to $P$, that is, that the $\FO(\D^P)$ sentence $\xi := \forall q \exists \tuple z( \lnot I \tuple z q \wedge \D^P \tuple z)$ is true if and only if $I\h m$ does \emph{not} enumerate all such subsets as $m$ ranges over $M$. 

	\begin{itemize}
		\item \textbf{Left to right:} Suppose that $\M \models \xi$. Then there exists a team $X = \{\epsilon\}[M/q][H/\tuple z]$, for some function $H$, such that $\M \models_X \lnot I \tuple z q \wedge \D^P\tuple z$. Now let $R = X(\tuple z)$. From the second conjunct, we have at once that $R \subseteq P^k$ and that $\langle P, R\rangle \in \D$; and for every $q \in M$ there exists some assignment $s \in X$ for which $s(q) = m$, and thus from the first conjunct we have that $\langle s(\tuple z), m\rangle \not \in I$, that is $s(\tuple z) \not \in I\h m$. Thus, $\langle P, R\rangle \in \D$ but $R$ is not contained in any $I \h m$; and since by Corollary \ref{coro:maximal} this $R$ must be contained in some relation that satisfies $\Dmax$ in $P$, it follows that $I\h m$ does not enumerate all maximal relations.

		\item \textbf{Right to left:} Suppose that $I\h m$ does not enumerate all subsets of $P^k$ which satisfy $\Dmax$ in $P$ for $m$ ranging in $M$. Then in particular there exists some $R \subseteq P^k$ such that $\langle P, R\rangle \in \Dmax$ and $R \backslash I\h m \not = \emptyset$ for all $m \in M$. Now consider the following function $H: \{\epsilon\}[M/q] \rightarrow \parts(M^k) \backslash \{\emptyset\}$: 
			\[
				H(s) = R \backslash I \h (s(q)) = \{\tuple a \in R : \langle \tuple a, s(q)\rangle \not \in I\}.
			\]
			As we just saw, $H(s) \not = \emptyset$ for all $s \in \{\epsilon\}[M/q]$. For $X = \{\epsilon\}[M/q][H/\tuple z]$, we clearly have that $\M \models_X \lnot I \tuple z q$. Furthermore, $\M \models_X \D^P \tuple z$: indeed, $X(\tuple z) \subseteq R$ by construction, $\langle P, R\rangle  \in \D$ since $\langle P, R\rangle \in \Dmax$, thus $\langle P, X(\tuple z)\rangle \in \D$ by downwards closure and finally $\M \models_X \D^P \tuple z$. Therefore $\M \models \xi$ as required.
	\end{itemize}
\end{proof}
At this point, our conclusion follows from the Chang-Makkai Theorem. 
\begin{Theorem}
	Let $\D(R)$ be a downwards closed, strongly first order, relativizable dependency that has the empty team property (that is, it is satisfied by the empty team)\footnote{The only downwards closed dependency that does not have this property is the trivial one that is false for all teams.}. Then it is definable in $\FO(=\!\!(\cdot))$. 
\end{Theorem}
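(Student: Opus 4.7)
The plan is to show that under the stated hypotheses $\D\tuple t$ is definable as a Boolean disjunction of formulas of the shape $\exists \tuple z_i(=\!\!(\tuple z_i) \wedge \eta_i(\tuple z_i) \wedge \theta_i(\tuple t, \tuple z_i))$, where the $\theta_i$ come from an application of the Chang--Makkai Theorem to $\Dmax$ and the $\eta_i$ are first-order ``validity certificates'' for the sets parametrised by those $\theta_i$. Since constancy atoms and the Boolean disjunction $\sqcup$ (which, by Definition \ref{def:booldisj}, is expressible using constancy) are both available in $\FO(=\!\!(\cdot))$, this will suffice.

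First I would observe that by downwards closure combined with Corollary \ref{coro:maximal}, $\M \models_X \D\tuple t$ iff $X(\tuple t) \subseteq R$ for some $R$ with $\langle M,R\rangle \in \Dmax$. Since $\Dmax$ is strongly first order, let $\Psi_{\max}(R)$ be an equivalent first-order sentence; Proposition \ref{propo:countable} then supplies the countability hypothesis of Theorem \ref{thm:changmakkai}, yielding finitely many formulas $\theta_1(\tuple x, \tuple z_1), \ldots, \theta_n(\tuple x, \tuple z_n)$ over the empty signature such that every $R$ satisfying $\Dmax$ equals $\{\tuple x : \M \models \theta_i(\tuple x,\tuple z_i^*)\}$ for some index $i$ and some parameters $\tuple z_i^* \in M$. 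I would then define $\eta_i(\tuple z_i)$ as the first-order formula obtained from $\Psi_{\max}(R)$ by replacing each atom $R\tuple x$ with $\theta_i(\tuple x,\tuple z_i)$ (renaming bound variables as needed), so that $\M \models \eta_i(\tuple z_i^*)$ iff the set parametrised by $\theta_i$ and $\tuple z_i^*$ lies in $\Dmax$. The candidate formula is
\[
\phi_\D(\tuple t) \;:=\; \bigsqcup_{i=1}^{n} \exists \tuple z_i\,\bigl(=\!\!(\tuple z_i) \wedge \eta_i(\tuple z_i) \wedge \theta_i(\tuple t,\tuple z_i)\bigr).
\]

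Verifying equivalence splits into two directions: for the forward one, given $R \supseteq X(\tuple t)$ with $\Dmax(R)$, pick $i$ and $\tuple z_i^*$ with $R = \{\tuple x : \theta_i(\tuple x,\tuple z_i^*)\}$, supply $\tuple z_i^*$ as the unique constant value of $\tuple z_i$, and use Proposition \ref{propo:FOform} to check each conjunct; for the converse, once $\tuple z_i$ takes some constant value $\tuple z_i^*$ in the team, $\eta_i(\tuple z_i)$ certifies that $\{\tuple x : \theta_i(\tuple x,\tuple z_i^*)\}$ satisfies $\Dmax$, $\theta_i(\tuple t,\tuple z_i)$ shows $X(\tuple t)$ is contained in it, and downwards closure yields $\D\tuple t$. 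The empty-team case is handled by the empty team property assumption on $\D$ together with Proposition \ref{propo:booldisj}. The main obstacle I anticipate is precisely this converse direction: Chang--Makkai only asserts that every $\Dmax$-set is captured by some $\theta_i$ with parameters, not that every parameter choice produces a $\Dmax$-set, so without a side condition the equivalence would break. What rescues the proof is that $\Dmax$ is itself first-order definable (established in the preceding proposition), so the side condition can be expressed by the ordinary first-order formula $\eta_i$ on $\M$ without appealing to any non--first-order machinery.
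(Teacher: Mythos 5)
Your proposal is correct and follows essentially the same route as the paper's proof: Chang--Makkai applied to $\Dmax$ (via Proposition \ref{propo:countable}), a constancy-quantified parameter tuple $\tuple z$ with a first-order certificate conjunct, membership of $X(\tuple t)$ via $\theta_i(\tuple t,\tuple z)$, Corollary \ref{coro:maximal} for the forward direction and downwards closure for the converse. The only (immaterial) difference is that your certificate $\eta_i$ substitutes $\theta_i$ into the sentence defining $\Dmax$, whereas the paper's $\chi_i$ substitutes it into the sentence defining $\D$; both choices make the two directions go through.
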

\begin{proof}
	By Proposition \ref{propo:countable}, $|\{R : R \subseteq M^k, \langle M, R\rangle  \in \Dmax\}|$ is countable for all countable models $\M$. Thus, by Theorem \ref{thm:changmakkai} and Proposition \ref{propo:DmaxFO} , there exist a finite number of first order formulas $\theta_1(\tuple x, \tuple z) \ldots \theta_n(\tuple x, \tuple z)$ over the empty vocabulary (and in which we can assume -- via variable renaming -- that the variables of $\tuple x \tuple z$ are not being quantified over), such that 
			\[
				\Dmax(R) \models \bigvee_{i=1}^n \exists \tuple z \forall \tuple x (R \tuple x \leftrightarrow \theta_i(\tuple x, \tuple z)).
			\]

			For each $i$, let $\chi_i(\tuple z):=\D(\theta_i(\h,\tuple z))$ be the first order formula obtained from $\D(R)$ by replacing every occurrence $R \tuple t$ of $R$ with $\theta_i(\tuple t, \tuple z)$, so that $\M \models_{(\tuple z:\tuple m)} \chi_i(\tuple z)$ if and only if $\langle M, R\rangle \in \D$ for $R = \{\tuple a : \M \models \theta_i(\tuple a, \tuple m))\}$.

			Then I state that $\D \tuple v$ is equivalent to the $\FO(=\!\!(\cdot), \sqcup)$ formula 
			\begin{equation}
				\bigsqcup_{i=1}^n \exists \tuple z (=\!\!(\tuple z) \wedge \chi_i(\tuple z) \wedge \theta_i(\tuple v, \tuple z))
				\label{eq:tr}
			\end{equation}
			Where $\sqcup$ represents the \emph{Boolean disjunction} of Definition \ref{def:booldisj}, which is definable in terms of constancy atoms.\\

			Indeed, suppose that $\M \models_X \D \tuple v$. Then for $R = X(\tuple v)$ we have that $\langle M, R\rangle \in \D$, and thus by Corollary \ref{coro:maximal} there exists some $R' \supseteq R$ such that $\langle M, R'\rangle \in \Dmax$. Thus, there exists some $i = 1\ldots n$ and a fixed tuple $\tuple m$ of elements of $M$ such that $\M \models_{(\tuple z : \tuple m)} \forall \tuple x(R' \tuple x \leftrightarrow \theta_i(\tuple x, \tuple z))$. Since $\langle M, R'\rangle \in \D$, $\M \models_{(\tuple z: \tuple m)} \chi_i(\tuple z)$. Since $R \subseteq R'$, $\M \models_{(\tuple z : \tuple m)} \forall \tuple x(R \tuple x \rightarrow \theta_i(\tuple x, \tuple z))$ and therefore by Proposition \ref{propo:FOform} $\M \models_{X[\tuple m/\tuple z]} \chi_i(\tuple z) \wedge \theta_i(\tuple v, \tuple z)$.  Moreover, clearly $\M \models_{X[\tuple m/\tuple z]} =\!\!(\tuple z)$, and so $\M \models_{X[\tuple m/\tuple z]} =\!\!(\tuple z) \wedge \chi_i(\tuple z) \wedge \theta_i(\tuple v, \tuple z)$, and finally Equation (\ref{eq:tr}) holds in $X$.\\

			Conversely, suppose that (\ref{eq:tr}) is true in $X$. Then there exists some $i$ and some tuple $\tuple m$ of elements such that $\M \models_{X[\tuple m/\tuple z]} \chi_i(\tuple z)$ and $\M \models_{X[\tuple m/\tuple z]} \theta_i(\tuple v, \tuple z)$. From the first statement, we obtain at once that the set $R' = \{\tuple a: \M \models \theta_i(\tuple a, \tuple m)\}$ is such that $\langle M, R'\rangle \in \D$. From the second one, we obtain that $R = X(\tuple v)$ is contained in $R'$. By the downwards closure of $\D$ we can then conclude that $\langle M, R\rangle \in \D$, that is $\M \models_X \D \tuple v$.
\end{proof}
\section{Conclusions and Further Work}
In this paper, a characterization of downwards closed, relativizable, strongly first order dependencies was found by showing that -- aside from the trivially false one -- these are precisely the ones that are definable in terms of constancy atoms. This is a special case of \textbf{Conjecture 1}, according to which all strongly first order dependencies are definable in terms of constancy atoms \emph{and upwards closed dependencies}; and it is the hope of the author that the techniques developed in this work may be adapted to generalize this result, ideally all the way to a full proof of \textbf{Conjecture 1}. 

A reasonable enough starting point could be to attempt to get rid, or alternatively prove the necessity, of the requirement of relativizability, for instance by showing that all strongly first order dependencies (or at least all downwards closed ones) are relativizable anyway. 

Another possible direction in which the present work can be expanded could be to consider not only logics $\FO(\DD)$ obtained by adding dependency atoms to First Order Logic, but more in general logics $\LL(\DD)$ where $\LL$ can be based on arbitrary choices of connectives and operators (interpreted in Team Semantics). Much like studying the more general notion of safety gave us in this work the tools necessary to prove the above mentioned result about strongly first order dependencies, it is possible that studying the notions of safety and strong first orderness in the more general $\LL(\DD)$ case may give us the tools for solving the $\FO(\DD)$ one; and, moreover, such an investigation would connect the research program to which this work belongs to the related area of the study of generalized quantifiers in Team Semantics \cite{engstrom12,kuusisto2015,engstrom2017dependence,barbero2017some}. 

		\paragraph{Acknowledgements} I thank Fausto Barbero for a great number of insightful comments, suggestions, and corrections (including, among others, his observation about the existence of non-relativizable dependencies mentioned at the end of Section \ref{sec:relativizable}). I also thank an anonymous referee for their many helpful comments and corrections. 
\bibliographystyle{plain}
\bibliography{biblio}
\appendix
\section{The Chang-Makkai Theorem for Non-Unary Relations and Countable Models}
Here I report for convenience a proof of Theorem \ref{thm:changmakkai}. It is no different from the usual proof (via recursive saturation) of the Chang-Makkai Theorem for countable models; but I recall it here anyway to verify that it works even for $k$-ary relations, since this theorem is typically presented only for unary relations. 

	We need to prove the equivalence of the following two statements: 
	\begin{enumerate}
		\item For every countable model $\M$ over the signature $\{R\}$, $|\{R : R \subseteq M^k, \M \models \phi(R)\}| < \aleph_1$; 
		\item There are a finite number of formulas $\theta_1(\tuple x, \tuple z) \ldots \theta_n(\tuple x, \tuple z)$, over the empty vocabulary, such that 
			\begin{equation}
				\phi(R) \models \bigvee_i \exists \tuple z \forall \tuple x (R \tuple x \leftrightarrow \theta_i(\tuple x, \tuple z)).
				\label{eq:def}
			\end{equation}
	\end{enumerate}

	The direction from 2. to 1. is obvious. For the direction from 1. to 2., we reason as follows. Suppose that 2. fails. Then the theory
	\begin{equation}
		\{\phi(R)\} \cup \{\forall \tuple z \lnot \forall \tuple x(R \tuple x \leftrightarrow \theta(\tuple x, \tuple z)) : \theta(\tuple x, \tuple z) \in \FO\},
		\label{eq:undef}
	\end{equation}
	where $\tuple z$ ranges over all tuples of all lengths and $\theta(\tuple x, \tuple z)$ ranges over all first order formulas over the empty signature, is satisfiable. Let $\M = \langle M, R\rangle$ be a countable, recursively saturated model for it. Let $k$ be the arity of $R$, and let $(\tuple t(i) : i \in \mathbb N)$ be an enumeration of all $k$-tuples of elements in $M$. We will show that there exist $2^{\aleph_0}$ distinct $R' \subseteq M^k$ such that $\langle M, R'\rangle$ is isomorphic to $\langle M, R\rangle$; and since $\M \models \phi(R)$, this in particular will show that 1. fails. 

	The idea of the proof is to define two functions $G$ and $H$, sending -- for every $n \in \mathbb N$ -- every function $f: \{0 \ldots n-1\} \rightarrow \{0,1\}$ into some $G(f), H(f) \in \{0 \ldots n-1\} \rightarrow M^k$ such that
	\begin{enumerate}[(i)]
		\item If $f \subseteq g$ then $G(f) \subseteq G(g)$ and $H(f) \subseteq H(g)$; 
		\item For all $f: \{0 \ldots n-1\} \rightarrow \{0,1\}$, 
			\[
				\langle M, G(f)\rangle \equiv \langle M, H(f)\rangle, 
			\]
			where we write $G(f)$ as a shorthand for the tuple of elements $(G(f)(i)_j : i \in 0 \ldots n-1, j \in 1 \ldots k)$ and similarly for $H(f)$; 
		\item If $\dom(f) = \{0 \ldots n-1\}$ for $n$ of the form $3m$ for some $m \in \mathbb N$ then 
			\[
				G(f \cup \{\langle n, 0\rangle\}) = G(f \cup \{\langle n, 1\rangle \}) = G(f) \cup \{\langle n, \tuple t(m)\rangle\};
			\]
		\item If $\dom(f) = \{0 \ldots n-1\}$ for $n$ of the form $3m+1$ then 
			\[
				H(f \cup \{\langle n, 0\rangle\}) = H(f \cup \{\langle n, 1\rangle \}) = H(f) \cup \{\langle n, \tuple t(m)\rangle\};
			\]
		\item If $\dom(f) = \{0, \ldots n-1\}$ for $n$ of the form $3m+2$ then there are two tuples $\tuple b_0 \in R$, $\tuple b_1 \not \in R$ such that
			\begin{align*}
				& G(f \cup \{\langle n, 0\rangle\}) = G(f) \cup \{\langle n, \tuple b_0\rangle\};\\
				& G(f \cup \{\langle n, 1\rangle\}) = G(f) \cup \{\langle n, \tuple b_1\rangle\};\\
				& H(f \cup \{\langle n, 0\rangle\}) = H(f \cup \{\langle n,1\rangle\}).
			\end{align*}
	\end{enumerate}
	Before proving that these $G$, $H$ exist, let us verify that their existence would lead to the required conclusion of there being uncountably many $R'$ isomorphic to $R$. Because of (i), we can apply $G$ and $H$ also to functions $f: \mathbb N \rightarrow \{0,1\}$ in the obvious way by setting 
	\[
		G(f) = \bigcup_{n \in \mathbb N} \{G(f_{|0 \ldots n}) : n \in \mathbb N\},~ H(f) = \bigcup_{n \in \mathbb N} \{H(f_{|0 \ldots n}) : n \in \mathbb N\}
	\]
	and still have that $\langle M, G(f) \rangle \equiv \langle M, H(f)\rangle$ (every first order formula will involve a finite number of constant symbols, so this follows trivially from (ii)). 
	
	For any such $f: \mathbb N \rightarrow \{0,1\}$, since $M$ (without the relation $R$) is a countable model over the empty signature\footnote{We could use the fact that it is recursively saturated and countable, but it is overkill for our needs. In fact, much of this proof could probably be simplified for the case that interests us, but we report it fully anyway.} and since by (iii) and (iv) every element of $M$ appears eventually in both $G(f)$ and $H(f)$,\footnote{In fact, the same element of $M$ will occur multiple times, e.g. as $G(f)(i)_j$ and as $G(f)(i')_{j'}$. But then, by (ii), it is also true that $H(f)(i)_j = H(f)(i')_j$, and vice versa.} it follows that the function $\iota_f$ sending, for each $i \in \mathbb N$ and each $j \in 1 \ldots k$, $G(f)(i)_j$ into $H(f)(i)_j$ is an automorphism of $M$. 

	If the smallest index $n$ such that $f(n) \not = g(n)$ is of the form $3m+2$ then the automorphisms $\iota_f$ and $\iota_g$ differ and map $R$ into two different relations: indeed, $\iota_f$ maps some $\tuple b_0 \in R$ into some tuple $\tuple c = H(f)(n)$, so $\tuple c \in \iota_f[R]$, while $\iota_g$ maps some $\tuple b_1 \not \in R$ into the same $\tuple c$, so $\tuple c \not \in \iota_g[R]$. But there are $2^{\aleph_0}$ many functions every pair of which differs in the first place in an index of this form,\footnote{For any $K \subseteq \mathbb N$ and for all $i \in \mathbb N$, let $f_K(i)$ be $1$ if $i$ is of the form $3m+2$ for some $m \in K$ and let it be $0$ otherwise. Then for all $K_1, K_2 \subseteq \mathbb N$, if $K_1 \not = K_2$ then $f_{K_1}$ and $f_{K_2}$ differ in the first place in such an index.} and therefore there exist $2^{\aleph_0}$ pairwise distinct automorphic images of $R$. Thus there are uncountably many $R$ such that $\M \models \phi(R)$, as required.\\

	It remains to show that the $F$, $G$ described above can be constructed. We proceed by induction on $n \in \mathbb N$, and there are three cases to consider: 
	\begin{itemize}
		\item $n$ is of the form $3m$:\\

			In this case, we extend $G$ by setting -- for all $f$ -- $G(f \cup \{\langle n, 0\rangle\}) = G(f \cup \{\langle n, 1\rangle \}) = G(f) \cup \{\langle n, \tuple t(m)\rangle\}$, and we must show that there exists some tuple $\tuple t' \in M^k$ such that 
			\[
				\langle M, G(f), \tuple t(m)\rangle \equiv \langle M, H(f), \tuple t'\rangle
			\]
			where we know, by induction hypothesis, that $\langle M, G(f)\rangle \equiv \langle M, H(f)\rangle$. This is clearly the case, because recursively saturated models are countably homogeneous.\footnote{Or, more simply, because $\M$ is infinite and has no relation symbol in the signature aside from $R$.  Using the recursive saturation of $\langle M, R\rangle$ here is actually really overkill; but we need it anyway to deal with the $3m+2$ case.}

			Then extend $H$ by setting $H(f \cup \{\langle n,0\rangle\}) = H(f \cup \{\langle n, 1\rangle\}) = H(f) \cup \{\langle n, \tuple t'\rangle\}$: the conditions about $F$ and $G$ are still satisfied. 
		\item $n$ is of the form $3m+1$:\\

			In this case, we extend $H$ as $H(f \cup \{\langle n, 0\rangle\}) = H(f \cup \{\langle n, 1\rangle \}) = H(f) \cup \{\langle n, \tuple t(m)\rangle\}$, and we must show that there exists some tuple $\tuple t' \in M^k$ such that 
			\[
				\langle M, G(f), \tuple t'\rangle \equiv \langle M, H(f), \tuple t(m)\rangle
			\]
			Again, this follows easily from countable homogeneity (or, more simply, from the fact that $M$ has no relation symbols in the signature).

			Then extend $G$ by setting $G(f \cup \{\langle n,0\rangle\}) = G(f \cup \{\langle n, 1\rangle\}) = G(f) \cup \{\langle n, \tuple t'\rangle\}$: the conditions about $F$ and $G$ are still satisfied. 

		\item $n$ is of the form $3m+2$:\\
		
			We need to find tuples $\tuple b_0 \in R$, $\tuple b_1 \not \in R$, $\tuple c$ such that 
			\[
				\langle M, G(f), \tuple b_0\rangle \equiv \langle M, H(f), \tuple c\rangle
			\]
			and 
			\[
				\langle M, G(f), \tuple b_1\rangle \equiv \langle M, H(f), \tuple c\rangle. 
			\]
			
			To do this, we first find $\tuple b_0 \in R$, $\tuple b_1 \not \in R$ such that $\langle M, G(f), \tuple b_0\rangle \equiv \langle M, G(f), \tuple b_1\rangle$.  Suppose that no such $\tuple b_0, \tuple b_1$ exist: then for all $\tuple b_0 \in R$, consider the recursive type 
			\[
				\{\lnot R \tuple x\} \cup \{
					\theta(\tuple b_0) \leftrightarrow \theta(\tuple x) : \theta(\tuple x) \in \FO
				\}
			\]
			where, in the above expression, $\theta$ ranges over all first order formulas (over the empty signature) with parameters in $G(f)$.  The above expression must be finitely unsatisfiable in $\langle M, R\rangle$; otherwise, by the recursive saturation of $(M,R)$, we would have that a $\tuple b_1$ exists as required. So, for each $\tuple b_0 \in R$ there exists a $\sigma_{\tuple b_0}(\tuple x)$ such that $\M \models \sigma_{\tuple b_0}(\tuple b_0)$ and $\M \models \forall \tuple x (\sigma_{\tuple b_0}(\tuple x) \rightarrow R \tuple x)$.\footnote{Indeed, let $\Gamma_0 = \{\lnot R \tuple x\} \cup \{\theta_i(\tuple b_0) \leftrightarrow \theta_i(\tuple x) : i = 1 \ldots l\}$ be unsatisfiable in $\langle M, R\rangle$. Then define $\sigma_{\tuple b_0}(\tuple x)$ as $\bigwedge \{\theta_i(\tuple x) : i = 1 \ldots l, \M \models \theta_i(\tuple b_0)\} \wedge \bigwedge\{\lnot \theta_i(\tuple x) : i = 1 \ldots l, \M \not \models \theta_i(\tuple b_0)\}$. Clearly $\M \models \sigma_{\tuple b_0}(\tuple b_0)$; and moreover, if $\M \models \sigma_{\tuple b_0}(\tuple c)$ we have that $\M \models \theta_i(\tuple b_0) \leftrightarrow \theta_i(\tuple c)$ for all $i = 1 \ldots l$, and therefore by the unsatisfiability of $\Gamma_0$ we have that $\M \models R \tuple c$.}  Then consider the theory 
			\[
				\{R \tuple x\} \cup \{ \forall \tuple y(\sigma(\tuple y) \rightarrow R \tuple y) \rightarrow \lnot \sigma(\tuple x) : \sigma(\tuple x) \in \FO\}
			\]
			where, again, $\sigma(\tuple x)$ ranges over all first order formulas over the empty signature with parameters in $G(f)$. This must also be finitely unsatisfiable given the theory of $M$, since otherwise by recursive saturation there would exist some $\tuple b \in R$ such that for no formula $\sigma$ we have $\M \models \forall \tuple y (\sigma(\tuple y) \rightarrow R \tuple y)$ and $\M \models \sigma(\tuple b)$, and we just saw that $\sigma_{{\tuple b}}$ is such a formula. 

			Therefore, there exists some finite number of $\sigma$s, which we can write as $\sigma_1(\tuple x, \tuple a) \ldots \sigma_q(\tuple x, \tuple a)$ for some tuple $\tuple a$ of parameters in $G(f)$, such that $\M \models \forall \tuple y(\sigma_i(\tuple y, \tuple a) \rightarrow R \tuple y)$ for all $i$ and such that $\M \models \forall \tuple x (R \tuple x \rightarrow \bigvee_i \sigma_i (\tuple x, \tuple a))$. But then $\rho(\tuple x, \tuple a) := \bigvee_i \sigma_i(\tuple x, \tuple a)$ defines $R$ in $M$ in the sense that $\M \models \forall \tuple x (R \tuple x \leftrightarrow \rho(\tuple x, \tuple a))$, which is impossible since $\langle M, R\rangle$ is a model of (\ref{eq:undef}). 
			
			Thus, it is possible to find $\tuple b_0$, $\tuple b_1$ such that $\langle M, G(f), \tuple b_0\rangle \equiv \langle M, G(f), \tuple b_1\rangle$.\\

			Now let us find -- using, once more, the countable homogeneity of $M$ -- some $\tuple c$ such that $\langle M, G(f), \tuple b_0\rangle \equiv \langle M, H(f), \tuple c\rangle$. Since $\langle M, G(f), \tuple b_0\rangle \equiv \langle M, G(f), \tuple b_1\rangle$, we then also have that $\langle M, G(f), \tuple b_1\rangle \equiv \langle M, H(f), \tuple c\rangle$. Finally, extend $G$ by setting $G(f \cup \{\langle n, 0\rangle\}) = G(f) \cup \{\langle n, \tuple b_0\rangle\}$ and $G(f \cup \{\langle n, 1\rangle\}) = G(f) \cup \{\langle n, \tuple b_1\rangle\}$, and extend $H$ by setting $H(f \cup \{\langle n, 0\rangle\}) = H(f \cup \{\langle n,1\rangle\}) = H(f) \cup \{\langle n, \tuple c\rangle\}$. The conditions are all still satisfied, and this concludes the proof. 
	\end{itemize}
\end{document}